  \def\rank{\operatorname{rank}}
  \def\im{\operatorname{Im}}
  \newtheorem{thm}{Theorem}[section]
    \newtheorem{theorem}{Theorem}[section]
  \newtheorem{lemma}[thm]{Lemma}
  \newtheorem{claim}[thm]{Claim}
  \newtheorem{cor}[thm]{Corollary}
  \newtheorem{prop}[thm]{Proposition}
    \newtheorem*{obs}{Observation}
  {}
\newtheorem*{rep@theorem}{\rep@title}
\newcommand{\newreptheorem}[2]{
\newenvironment{rep#1}[1]{
 \def\rep@title{#2 \ref{##1}}
 \begin{rep@theorem}}
 {\end{rep@theorem}}}
  \theoremstyle{remark}
  \newtheorem{example}[thm]{Example}
  \newtheorem{definition}[thm]{Definition}
  \newtheorem{remark}[thm]{Remark}
  \newtheorem*{definition*}{Definition}
  \newtheorem*{remark*}{Remark}
    \newcommand{\msr}[1]{}
      \newcommand{\msb}[1]{}
      \newcommand{\mb}[1]{}
  \def\R{{\mathbb R}}
  \def\Rbb{{\mathbb R}}
  \def\Z{{\mathbb Z}}
  \def\L{{\mathcal L}}
   \def\cH{{\mathcal H}}
     \def\cU{{\mathcal U}}
  \def\P{{\mathcal P}}
  \def\I{{\mathcal I}}
  \def\hull{\operatorname{Hull}}
  \def\C{{\mathcal C}}
    \def\h{{\mathfrak h}}
   \def\hh{\hat{\mathfrak h}}
    \def\o{\overline}
\begin{document}
  
  \normalem
  \title{Topological median algebra structures on ER homology manifolds I: local cubulation}
  \author{Mladen Bestvina, Kenneth Bromberg and Michah Sageev}
 \maketitle
 \begin{abstract}
 We study topological median algebra structures on Euclidean spaces
 and, more generally, ER homology manifolds. We show that all such
 median structures have a local CAT(0) cubulation structure. We also
 show that topological median algebra structures are completely metrizable as
 median metric spaces if and only if intervals are compact. We give
 examples of both metrizable and non-metrizable such structures, as
 well as provide a construction for producing many non-locally
 cubulated topological median algebra structures on the unit ball in
 Euclidean space.
 \end{abstract}

Median algebras have played an important role in recent years in generalizing the notion of a CAT(0) cube complex. Generally speaking, the study of median metric spaces (which are median algebras) and coarse median spaces (coarse versions of median metric spaces) have played analogous roles to what $\R$-trees and quasi-trees played to simplicial trees. 
Topological median algebras are more general than median metric spaces in that they do not require a metric, just a topology on the median algebra. The overarching question we address in this paper is this: given a topological space $X$, can one describe all the topological median algebra structures on $X$?  In this paper, we give an answer to this question when $X=\R^n$ and more generally, when $X$ is an ER homology manifold.  

One natural class of examples for topological median algebra structures on $\R^n$ is given by a CAT(0) cubulation of it. That is, a CAT(0) cube complex whose underlying space is $\R^n$. We will see that while this is not generally true, it is true locally.  While the goal theorem was for $\R^n$ it turns out that the inductive part of the proof brings us into the more general setting of ER-homology manifolds, so that the theorem is actually proved for them. 

\begin{reptheorem}{LocalCubingHigherDimension} Every topological median algebra structure on an ER homology manifold  is locally isomorphic to a finite CAT(0) cube complex as a topological median algebra. 
\end{reptheorem}

The above theorem is generalization of a theorem of Bowditch \cite{Bowditch2018}, who proved a local cubulation theorem for complete median metric structures on $\Rbb^n$. Thus, a natural question following the above theorem, is  whether or not the topological median algebra structures in our setting are completely metrizable. That is, does there exists a complete median metric structure which realizes the given topological median algebra structure. To this end, we prove

\begin{reptheorem}{CompleteMetrization} A topological median algebra structure on an ER homology manifold is completely metrizable as median metric space if and only if all median intervals are compact. 
\end{reptheorem}

We will also see that there are many examples of topological median
algebra structures on $\R^n$ which do not come from cubulations of
$\R^n$, even in dimension 2. Moreover, we construct examples of
natural topological median algebra structures on $\R^n$ which are not
completely metrizable as median metric spaces. 

Finally, one can ask whether all topological median algebra structures on any ``reasonable" topological space are locally cubulated as in Theorem \ref{LocalCubingHigherDimension}. We show, using a median variant of the Floyd construction, that in dimensions $n>1$, there exist infinitely many complete median metric structures on the closed $n$-ball which are not locally cubulated.

The structure of the paper is as follows. In Sections \ref{MedPrelim}, we collect and develop the necessary background tools we need about median algebras. In Section \ref{Fancy}, we discuss the results about $Z$-sets and homology manifolds that we will need, including Mitchell's theorem that the boundary of a homology manifold is a homology manifold and Bredon's local constancy of the orientation sheaf. In Section  \ref{TopMedAlg}, we discuss some results we will use about topological median algebras in our setting.  
In Section \ref{DimensionTwo}, we give a proof of the local cubulation theorem in dimension two. While not logically necessary, the 2-dimensional argument introduces all the relevant ideas and is self contained, in that it does not rely on ER homology manifold theory. In Section \ref{HigherDimensions} we give the general proof for ER homology manifolds. In Section \ref{Metrization}, we discuss the proof of Theorem \ref{CompleteMetrization} and along the way give the description of the topological median algebra structure as an exhaustion by compact CAT(0) cube complexes with subdivision. 

A word about terminology in this paper. For brevity,  we will use the term \emph{median structure} for a topological median algebra structure on a given space. 

In a sequel to this paper, we will show a converse to the local cubulation theorem. Namely, we show that a local cubulation together with a natural nesting condition on the leaves of the cubulation, gives rise to a median structure. 

{\bf Acknowledgements.} We thank Elia Fioravanti for his interest and
for a careful reading of the first draft. The first author was
partially supported by NSF grant DMS-2304774 and by a grant from the
Simons Foundation (SFI-MPS-SFM-00011601). The second author was partially
supported by NSF grants DMS-1906095 and DMS-2405104 and a grant from the Simons foundation (SFI-MPS-SFM-00006400). The last author was supported by ISF grant 660/20. 
 
  \section{Median algebra preliminaries}
  \label{MedPrelim}

  \subsection{Basic definitions}
  We recall here some median algebra basics. We refer the reader to Bowditch's recent extensive reference work on median algebras \cite{Bowditch24} for more details.

  A median algebra is a set $M$ together with a ternary operation $(a,b,c)\mapsto abc$ such that 
  
  \begin{enumerate}
  \item (Majority rules) $aab=a$ for $a,b\in M$
  \item (Symmetry) $abc=bac=acb$ for all $a,b,c\in M$
  \item (Distributivity) $ab(cde)=(abc)(abd)e$ for all $a,b,c,d,e\in M$
  \end{enumerate}
  
Another axiom which is sometimes used instead of the distributivity axiom is the associativity 
axiom:

$$(axb)xc=ax(bxc) \text{ for all } a,b,c,x\in M$$
The two axioms are equivalent, although the proof is non-trivial (see
\cite{Bowditch24}). A consequence of the axioms is the {\it long}
distributive law:
$$ab(xyz) = (abx)(aby)(abz)$$
which is sometimes useful.

Given $a,b\in M$, the  \emph{interval} between $a$ and $b$ is defined as $[a,b]=\{x\vert abx=x\}$. Note that from the  distributive law we have

\begin{equation}\label{retraction}
    ab(abx)=(aba)(abb)x=abx \tag{$\star$}
  \end{equation}

This computation tells one has a map $\rho=\rho_{ab}:M\to [a,b]$, defined by $\rho(x)=abx$, and that this map is a retraction. This map is called the \emph{gate map} to the interval. The long distributive law implies that the gate map is a median homomorphism.

We say that a subset $C\subset M$ is convex if for any $a,b\in C$, we have $[a,b]\subset C$. The most obvious convex sets are intervals themselves.

\begin{lemma}
If $c,d\in [a,b]$, then $[c,d]\subset[a,b]$
\end{lemma}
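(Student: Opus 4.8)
The plan is to unwind the definition of the interval directly and verify the inclusion pointwise, using only the three axioms together with the standing hypotheses. Fix an arbitrary $x\in[c,d]$; our goal is to show $x\in[a,b]$, i.e.\ that $abx=x$. From $c,d\in[a,b]$ we have the two identities $abc=c$ and $abd=d$, and from $x\in[c,d]$ we have $cdx=x$. These are exactly the ingredients needed.

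The computation I would carry out is the following chain: since $cdx=x$,
\[
abx=ab(cdx).
\]
Now apply the distributivity axiom $ab(cde)=(abc)(abd)e$ with $e=x$ to get $ab(cdx)=(abc)(abd)x$, and then substitute $abc=c$ and $abd=d$ (using symmetry of the median to read $(abc)(abd)x$ as the median of $c$, $d$, $x$) to obtain $(abc)(abd)x=cdx$. Finally $cdx=x$ again, so altogether $abx=x$, which is precisely the statement $x\in[a,b]$. Since $x\in[c,d]$ was arbitrary, $[c,d]\subseteq[a,b]$.

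I do not expect a genuine obstacle here: the only points requiring a moment's care are applying the distributivity axiom in the correct orientation (reading off $(abc)(abd)x$ as the image of $cdx$ under the expression $ab(\cdot)$) and invoking the symmetry axiom so that $(abc)(abd)x$ may be rewritten as $cdx$ after the substitutions. An alternative, essentially equivalent, route would be to observe that the gate map $\rho_{ab}\colon M\to[a,b]$, $\rho_{ab}(y)=aby$, is a median homomorphism fixing $c$ and $d$ pointwise, so for $x\in[c,d]$ one has $\rho_{ab}(x)=\rho_{ab}(cdx)=\rho_{ab}(c)\rho_{ab}(d)\rho_{ab}(x)$; combined with $x=cdx$ this again forces $\rho_{ab}(x)=x$. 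The direct computation above is the shortest, so that is the one I would present.
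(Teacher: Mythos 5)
Your proposal is correct and follows exactly the paper's argument: for $x\in[c,d]$ one computes $abx=ab(cdx)=(abc)(abd)x=cdx=x$ using the distributivity axiom and the hypotheses $abc=c$, $abd=d$. Nothing further is needed.
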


\begin{proof}
Given $x\in[c,d]$, we have: $x\in [c,d]\implies cdx=x \implies abx=ab(cdx)=(abc)(abd)x=cdx=x \implies x\in [a,b]$
\end{proof}

\begin{lemma}[Preimage of convex is convex]
If $C\subset [a,b]$ is a convex subset of an interval and $\rho:M\to [a,b]$ is the gate map, then $\rho^{-1}(C)$ is convex. 
\label{ConvexPreimages}
\end{lemma}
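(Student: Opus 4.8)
The plan is to check convexity straight from the definition. Fix $x,y\in\rho^{-1}(C)$; that is, $\rho(x)=abx\in C$ and $\rho(y)=aby\in C$. I want to show $[x,y]\subset\rho^{-1}(C)$, so let $z\in[x,y]$, which by definition of the interval means $xyz=z$. The goal is to prove $\rho(z)\in C$.

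The key step is to apply the long distributive law recorded above to $\rho(z)=ab(xyz)$:
$$\rho(z)=ab(xyz)=(abx)(aby)(abz)=\rho(x)\,\rho(y)\,\rho(z).$$
By the definition of intervals, the identity $\rho(x)\,\rho(y)\,\rho(z)=\rho(z)$ says exactly that $\rho(z)\in[\rho(x),\rho(y)]$. (Equivalently, this is the already-noted fact that the gate map is a median homomorphism, so it carries $[x,y]$ into $[\rho(x),\rho(y)]$.)

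To conclude, $\rho(x),\rho(y)\in C$ by hypothesis and $C$ is convex, so $[\rho(x),\rho(y)]\subset C$; hence $\rho(z)\in C$, i.e. $z\in\rho^{-1}(C)$. Since $z\in[x,y]$ was arbitrary, $[x,y]\subset\rho^{-1}(C)$, and $\rho^{-1}(C)$ is convex.

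There is essentially no obstacle: the only things to be attentive to are (i) to invoke the long distributive law (or the median-homomorphism property of the gate map) rather than trying to massage $abz$ by hand, and (ii) to recognize that membership in an interval is precisely the fixed-point identity defining it. Note also that the hypothesis $C\subset[a,b]$ is not really used — it only serves to make $\rho^{-1}(C)$ the natural object to consider, since $\rho^{-1}(C)=\rho^{-1}\!\big(C\cap[a,b]\big)$ in any case.
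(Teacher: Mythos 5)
Your proof is correct and follows essentially the same route as the paper: both apply distributivity to $\rho(z)=ab(xyz)$ and conclude $\rho(z)\in[\rho(x),\rho(y)]\subset C$ by convexity of $C$. The only cosmetic difference is that you invoke the long distributive law to land directly on the fixed-point identity $\rho(x)\rho(y)\rho(z)=\rho(z)$, while the paper uses the short distributive law to write $\rho(z)=x'y'z$ and then notes that this median lies in $[x',y']$.
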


\begin{proof}
Suppose that $x,y\in\rho^{-1}(C)$. Let $x'=\rho(x)$ and $y'=\rho(y)$. Then for any $z\in [x,y]$, we have 

$$\rho(z)=abz=ab(xyz)=(abx)(aby)z=x'y'z \subset [x',y']\subset C$$
\end{proof}

\subsection{Sholander's Theorem}

Consider three points $a,b,c\in M$, then there are three intervals defined by $a,b$ and $c$: $[a,b]$, $[b,c]$, and $[a,c]$. From  (\ref{retraction}), we have that the median $abc$ is contained in all three intervals. In fact, we have something stronger, namely that $m$ is the only point common to all three intervals.

\begin{lemma}
If $a,b,c\in M$, then $[a,b]\cap[b,c]\cap[a,c]=\{abc\}$. 
\end{lemma}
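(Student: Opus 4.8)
The plan is to show the two inclusions separately. The inclusion $\{abc\}\subseteq [a,b]\cap[b,c]\cap[a,c]$ is exactly the content of equation~(\ref{retraction}) applied symmetrically: from $ab(abc)=abc$ we get $abc\in[a,b]$, and by the symmetry axiom the same computation with the roles of the points permuted gives $abc\in[b,c]$ and $abc\in[a,c]$. So the real work is the reverse inclusion: if $x$ lies in all three intervals, then $x=abc$.

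\medskip

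\noindent\textbf{Main step.} Suppose $x\in[a,b]\cap[b,c]\cap[a,c]$, so that $abx=x$, $bcx=x$, and $acx=x$. The idea is to compute $abc$ by repeatedly substituting these identities and using distributivity (equivalently the long distributive law) to collapse everything down to $x$. Concretely, I would start from $abc$ and try to insert an $x$: for instance, write $abc = ab(acx)$ using $acx=x$ acting in the third slot — wait, one must be careful, since $abc$ has $c$ in the third slot and we want to replace $c$, not introduce a nested median. A cleaner route: since $x\in[a,c]$ we have $x=acx$, and since $x\in[a,b]$ we have $x=abx$. Then consider $abx = ab(acx)$; by the long distributive law $ab(acx)=(aba)(abc)(abx)=a\cdot(abc)\cdot(abx)$. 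Using majority and $abx=x$ this becomes $a(abc)x$. So $x = a(abc)x$, i.e.\ $x\in[a,abc]$. Symmetrically, using $x=bcx=bax$ one shows $x\in[b,abc]$, and using $x=cax=cbx$ one shows $x\in[c,abc]$. Now $abc\in[a,b]$ already, and the previous two lemmas (intervals of endpoints in an interval stay inside, and more directly the explicit interval computation) let me pin $x$ down: from $x\in[a,abc]$ and $x\in[b,abc]$, the point $x$ lies in $[a,abc]\cap[b,abc]$; applying the gate/retraction identity once more, $x = a(abc)x$ and $x = b(abc)x$, so $x = a(abc)(b(abc)x)$, and the long distributive law in the variable $abc$ gives $x=(ab(abc))((a)(abc)(b))\cdots$ — the honest finish is to compute $abx$ in two ways: $abx = x$ directly, and $abx = ab(a(abc)x)$ which distributes to $(aba)(ab(abc))(abx) = a\,(abc)\,x = x$ (consistent), while also $(abc)\cdot x$ can be shown equal to $abc$ by feeding the three membership facts $x\in[a,abc],[b,abc],[c,abc]$ into $abc = abc\cdot abc\cdot abc$-style majority substitutions. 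The point I expect to actually need is: the three facts $x\in[a,abc]$, $x\in[b,abc]$, $x\in[c,abc]$ say $abc$ is ``between'' $x$ and each of $a,b,c$, so $abc\in[x,a]\cap[x,b]\cap[x,c]$ hence $abc\in[x,abc']$-type reasoning forces $x=abc$ by symmetry of the whole configuration — running the identical argument with $x$ and $abc$ interchanged yields $abc\in[x,m']$ for $m'=x(abc)$-medians that collapse, and antisymmetry of the interval relation (if $p\in[q,r]$ and $r\in[q,p]$ then $p=r$, which follows from $p=qrp$, $r=qpr$, so $p=qr(qpr)=(qrq)(qrp)r = q\,p\,r = r$) finishes it.

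\medskip

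\noindent\textbf{Where the difficulty lies.} The conceptual content is trivial but the bookkeeping of which slot each substitution happens in is the only place to go wrong; the key auxiliary fact worth isolating first is the \emph{antisymmetry of betweenness}: if $p\in[q,r]$ and $r\in[q,p]$ then $p=r$, proved by the short computation $p = qrp = q(qpr)p = \cdots = r$ via majority and distributivity. Once that lemma is in hand, the proof is: show $abc\in[x,a]$, $abc\in[x,b]$, $abc\in[x,c]$ (by the same long-distributive-law substitution used above, with roles swapped), combine with the hypothesis $x\in[a,b]$ which already gives one of the needed containments, and apply antisymmetry. I would present it as the clean three-line chain $x = abx = ab(acx) = \cdots = abc$ once the substitution pattern is verified, rather than the exploratory version above. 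The main obstacle, such as it is, is resisting the temptation to appeal to a geometric picture and instead driving every step through axioms (1)--(3) and the long distributive law.
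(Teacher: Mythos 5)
Your easy inclusion and your derivation of the three memberships $x\in[a,abc]$, $x\in[b,abc]$, $x\in[c,abc]$ are correct (the chain $x=abx=ab(acx)=(aba)(abc)(abx)=a(abc)x$ is exactly right), and your antisymmetry lemma (if $p\in[q,r]$ and $r\in[q,p]$ then $p=r$) is true, via the corrected computation $p=qrp=q(qpr)p=qp(qpr)=qpr=r$ (your first version $p=qr(qpr)$ is not justified as written). The genuine gap is the step you use to finish: you assert that $abc\in[x,a]$, $abc\in[x,b]$, $abc\in[x,c]$ follow ``by the same substitution with roles swapped'' and then feed these into antisymmetry. They do not follow; in fact your hypotheses already determine the median of $a$, $x$, $abc$: by the long distributive law $ax(abc)=(axa)(axb)(axc)=a\cdot x\cdot x=x$, so $abc\in[a,x]$ holds if and only if $abc=x$, and likewise with $b$ or $c$ in place of $a$. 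Thus each of the three ``reversed'' containments is equivalent to the very conclusion you want, and the antisymmetry finish is circular: your substitutions only ever produce facts of the form $x\in[q,abc]$, never $abc\in[q,x]$.

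The repair is one more substitution of the kind you already performed, which is essentially the paper's one-line proof. Writing $m=abc$, you have $x=mbx$ (this is your $x\in[b,m]$); now substitute $x=acx$ once more: $x=mb(acx)=(mba)(mbc)x=m\,m\,x=m$, since $mba=ab(abc)=abc=m$ and $mbc=bc(abc)=abc=m$ by the retraction identity $(\star)$. Equivalently, from your own two facts $x=amx$ and $x=bmx$: $x=am(bmx)=(amb)(amm)(amx)=m\,m\,x=m$. No antisymmetry lemma is needed, and with this ending your argument coincides with the paper's.
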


\begin{proof}
Let $m=abc$. Suppose $x\in[a,b]\cap [b,c] \cap [a,c]$. Then we have 

$$x=abx=ab(bcx)=(abc)bx=mbx=mb(acx)=(mba)(mbc)x=mmx=m$$
\end{proof}

The content of Sholander's Theorem is that this property of triple intersection -- along with two other innoucous properties which are satisfied for median algebras --  is enough to determine a median algebra structure. 

\begin{thm} (Sholander \cite{Sholander54})
Suppose that $\P(M)=2^M$ and that we have a set $M$ and a map $I:M\times M \to \P(M)$. Suppose further that this map satisfies the following properties

\begin{enumerate}
\item $I(a,a)=\{a\}$
\item if $c,d\in I(a,b)$, then $I(d,c)\subset I(a,b)$
\item if $a,b,c\in M$, then $\vert I(a,b)\cap I(b,c)\cap I(a,c) \vert =1$
\end{enumerate}

Then the map $M^3\to M$ defined by $(a,b,c) \mapsto I(a,b)\cap I(b,c)\cap I(a,c)$ defines a median algebra structure on $M$. 
\end{thm}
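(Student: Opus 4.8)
The plan is the following. Throughout, write $m(a,b,c)$ for the (putative) value $I(a,b)\cap I(b,c)\cap I(a,c)$ of the operation.

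First I would dispose of the two easy median axioms. Applying property (3) to the triples $(a,b,b)$ and $(a,a,b)$ and using property (1) (so $I(b,b)=\{b\}$, $I(a,a)=\{a\}$) shows $a,b\in I(a,b)$ for all $a,b$. Feeding $c=a$, $d=b$ into property (2) then gives $I(b,a)\subseteq I(a,b)$, hence $I(a,b)=I(b,a)$ by symmetry. Therefore $I(a,b)\cap I(b,c)\cap I(a,c)$ is invariant under every permutation of $\{a,b,c\}$, so $m$ is a well-defined totally symmetric ternary operation, and $m(a,a,b)$ is the unique point of $I(a,a)\cap I(a,b)=\{a\}$, which is the majority law. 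It remains to verify one of the distributivity/associativity identities; I would aim at the associativity law $m(m(a,x,b),x,c)=m(a,x,m(b,x,c))$, after which the conclusion follows from the cited equivalence of the two axioms in the presence of majority and symmetry \cite{Bowditch24}.

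Next I would record two elementary facts. \emph{(Interval characterisation.)} $x\in I(a,b)$ if and only if $m(a,x,b)=x$; both directions are immediate from the definitions and property (3). \emph{(Poset structure.)} For fixed $x$, declare $y\le_x z$ if and only if $y\in I(x,z)$. Reflexivity is ``endpoints lie in their interval'', transitivity is property (2), and antisymmetry holds because $y\in I(x,z)$ and $z\in I(x,y)$ force $y=m(x,y,z)=z$ by symmetry; so $\le_x$ is a partial order and $I(x,z)=\{y:y\le_x z\}$ is the principal ideal of $z$. Also, combining property (2) with the interval characterisation: if $y\in I(a,b)$ then $I(a,y),\,I(y,b)\subseteq I(a,b)$ and $I(a,y)\cap I(y,b)=\{y\}$.

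The crux is the \emph{convexity-of-intervals lemma}: for all $a,b,c$,
$$I(a,b)\cap I(a,c)=I\bigl(a,\,m(a,b,c)\bigr).$$
With $m:=m(a,b,c)$, the inclusion $\supseteq$ is property (2). For $\subseteq$, take $y\in I(a,b)\cap I(a,c)$ and argue as follows: $m(b,y,c)\in I(b,c)$ always, while $y\in I(a,b)$ gives $m(b,y,c)\in I(b,y)\subseteq I(a,b)$ and $y\in I(a,c)$ gives $m(b,y,c)\in I(y,c)\subseteq I(a,c)$, so $m(b,y,c)\in I(a,b)\cap I(b,c)\cap I(a,c)=\{m\}$; hence $m\in I(b,y)$, so $I(y,m)\subseteq I(y,b)$. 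Then $m(a,y,m)\in I(a,y)\cap I(y,m)\subseteq I(a,y)\cap I(y,b)=\{y\}$, so $m(a,y,m)=y$, i.e.\ $y\in I(a,m)$. I expect this lemma --- finding the right order for the chain of nesting-and-singleton reductions --- to be the main obstacle; everything after it is formal. Restated in terms of $\le_a$, the lemma says precisely that $m(a,b,c)$ is the meet $b\wedge_a c$ (greatest lower bound of $\{b,c\}$) in the poset $\le_a$.

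Finally I would conclude. For fixed $x$, the poset $\le_x$ admits all binary meets, with $b\wedge_x c=m(x,b,c)$ by the lemma; and in any poset with binary meets the meet operation is associative, since $(a\wedge_x b)\wedge_x c$ and $a\wedge_x(b\wedge_x c)$ both equal the greatest lower bound of $\{a,b,c\}$. Unwinding via the symmetry of $m$, one has $(a\wedge_x b)\wedge_x c=m(m(a,x,b),x,c)$ and $a\wedge_x(b\wedge_x c)=m(a,x,m(b,x,c))$; equating them is exactly the associativity axiom. Together with the majority and symmetry laws this makes $(M,m)$ a median algebra, and the interval characterisation shows that its interval operator $\{z:m(a,b,z)=z\}$ is the original $I$.
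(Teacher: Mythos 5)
The paper does not actually prove this statement: it is quoted as Sholander's theorem and deferred entirely to \cite{Sholander54}, so there is no in-paper argument to compare against. Your proposal is, as far as I can check, a correct self-contained proof, and its steps all go through: property (3) applied to $(a,b,b)$ and $(a,a,b)$ gives $a,b\in I(a,b)$, property (2) then gives $I(a,b)=I(b,a)$, hence $m$ is symmetric and satisfies majority; the interval characterisation $x\in I(a,b)\Leftrightarrow m(a,x,b)=x$, the partial order $\le_a$, and the auxiliary fact $I(a,y)\cap I(y,b)=\{y\}$ for $y\in I(a,b)$ are all correctly derived; and the key convexity lemma $I(a,b)\cap I(a,c)=I(a,m(a,b,c))$ is proved by a valid chain of nesting/singleton reductions, which identifies $m(a,b,c)$ as the meet $b\wedge_a c$ and yields the short associativity law $(axb)xc=ax(bxc)$ from associativity of meets. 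The one structural caveat is that your argument establishes majority, symmetry and \emph{associativity}, and then outsources distributivity to the equivalence of the associativity and distributivity axioms; that equivalence is itself a nontrivial theorem, but since the paper explicitly states it (with reference to \cite{Bowditch24}) just before this point, invoking it is legitimate here -- though it means your proof is not a from-scratch verification of the distributive law, unlike Sholander's original treatment. As a bonus, your interval characterisation shows that the interval operator of the resulting median algebra agrees with the given $I$, which is the form in which the theorem is actually used later in the paper.
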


Properties of intervals can be useful for proving things for median algebras as well. For example, we have the following

\begin{obs}
 If $M$ is a median algebra and $c\in [a,b]$, then  $[a,c]\cap [c,b]= \{c\}$
\end{obs}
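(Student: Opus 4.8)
The plan is to obtain the Observation as an immediate consequence of the two lemmas just proved above, namely the statement that $c,d\in[a,b]$ implies $[c,d]\subseteq[a,b]$, together with the triple--intersection identity $[a,b]\cap[b,c]\cap[a,c]=\{abc\}$. No fresh manipulation of the median axioms should be needed beyond invoking symmetry and majority on a couple of tiny expressions.

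First I would record that both of the intervals in the statement are contained in $[a,b]$. Indeed $a,b\in[a,b]$ (since $aba=a$ and $abb=b$ by symmetry and majority), and $c\in[a,b]$ by hypothesis; applying the first lemma to the pairs $\{a,c\}$ and $\{c,b\}$ of points of $[a,b]$ then gives $[a,c]\subseteq[a,b]$ and $[c,b]\subseteq[a,b]$. Consequently $[a,c]\cap[c,b]\subseteq[a,b]$, so intersecting with $[a,b]$ costs nothing:
$$[a,c]\cap[c,b]=[a,c]\cap[c,b]\cap[a,b].$$

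Next I would apply the triple--intersection lemma to the triple $a,c,b$ (playing the roles of its $a,b,c$), which identifies $[a,c]\cap[c,b]\cap[b,a]$ with the singleton $\{acb\}$. Since $[b,a]=[a,b]$, and since the symmetry axiom gives $acb=abc$ while $c\in[a,b]$ gives $abc=c$, combining with the displayed equality yields $[a,c]\cap[c,b]=\{c\}$, which is the claim. I do not expect any real obstacle: the whole content is that a point of an interval splits it into two subintervals both lying inside the original, so the triple--intersection lemma can be used with the ambient interval $[a,b]$ itself as the third interval. The only points needing a moment's attention are matching which interval plays which role when citing the triple--intersection lemma and the rewriting $acb=c$ via symmetry, both entirely routine.
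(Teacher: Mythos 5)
Your proposal is correct and follows essentially the same route as the paper: reduce the double intersection to the triple intersection $[a,b]\cap[a,c]\cap[b,c]$, invoke the triple-intersection lemma, and identify the singleton with $\{c\}$ via $abc=c$. You merely make explicit (via the lemma that $c,d\in[a,b]$ implies $[c,d]\subseteq[a,b]$) the containment step that the paper leaves implicit.
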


\begin{proof}
If $c\in [a,b]$, then any point in the double intersection $[a,c]\cap [b,c]$ is in the triple intersection $[a,b]\cap [a,c]\cap [b,c]$, which is a singleton. 
\end{proof}

Another elementary property of intervals is that the intersection of two intervals is an interval.

\begin{lemma}
The intersection of two intervals in a median algebra is either empty or an interval.
\label{IntervalIntersection}
\end{lemma}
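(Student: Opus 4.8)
The plan is to exhibit the intersection explicitly. Writing $I=[a,b]\cap[c,d]$ and assuming $I\neq\emptyset$, I would claim that $I=[\,abc,\,abd\,]$ and prove the two inclusions. The inclusion $I\subseteq[abc,abd]$ comes straight from distributivity: for $x\in I$ we have $cdx=x$ (since $x\in[c,d]$), so $(abc)(abd)x=ab(cdx)=abx=x$, the last equality because $x\in[a,b]$; hence $x\in[abc,abd]$.

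For the reverse inclusion I would first note that $I$ is convex, being an intersection of the convex sets $[a,b]$ and $[c,d]$, so it is enough to show that the two candidate endpoints $abc$ and $abd$ lie in $I$. That $abc,abd\in[a,b]$ is precisely (\ref{retraction}). The real task is to show $abc\in[c,d]$ (and symmetrically $abd\in[c,d]$), and this is where the hypothesis $I\neq\emptyset$ must enter. I would fix $e\in I$ and pin down the median $u:=ce(abc)$ from two sides: on one hand $u\in[e,abc]\subseteq[a,b]$, using convexity of $[a,b]$ and $e,abc\in[a,b]$, so $ab\,u=u$; on the other hand the long distributive law gives $ab\,u=ab\big(ce(abc)\big)=(abc)(abe)\big(ab(abc)\big)=(abc)\,e\,(abc)=abc$, using $abe=e$, $ab(abc)=abc$, and $xyx=x$. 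Hence $u=abc$, i.e. $abc\in[c,e]$; since $c,e\in[c,d]$ and $[c,d]$ is convex, $abc\in[c,d]$. Running the identical computation with $d$ in place of $c$ gives $abd\in[c,d]$, so $abc,abd\in I$, and convexity of $I$ then yields $[abc,abd]\subseteq I$.

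I expect the only real obstacle to be this reverse inclusion — concretely, the claim that the gate $abc$ of $c$ into $[a,b]$ falls back inside $[c,d]$. The device that makes it work is the two-sided squeeze on $ce(abc)$: it lies in a subinterval of $[a,b]$, hence is fixed by the gate map to $[a,b]$, while the long distributive law evaluates it to $abc$; together these force $abc$ onto the interval $[c,e]\subseteq[c,d]$. Everything else is a mechanical use of the (long) distributive law, (\ref{retraction}), and the convexity of intervals recorded above.
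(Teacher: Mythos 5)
Your proof is correct and takes essentially the same route as the paper: both identify the nonempty intersection as $[abc,abd]$, obtain the inclusion $[a,b]\cap[c,d]\subseteq[abc,abd]$ by the same distributivity computation, and obtain the reverse inclusion by showing $abc,abd\in[c,d]$ using a point $e$ of the intersection together with convexity. The only difference is cosmetic: where the paper invokes the gate property (that $abc$ lies in $[c,x]$ for every $x\in[a,b]$), you verify it directly via the squeeze on $ce(abc)$ with the long distributive law, which is a perfectly sound way to supply that step.
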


\begin{proof}

Let $[a,b]$ and $[c,d]$ be two intervals.
Let $\hat{c}=abc$ and $\hat{d}=abd$, the  projections of $c$ and $d$ onto $[a,b]$.
We wish to show that $[\hat c, \hat d]=[a,b]\cap [c,d]$.

The containment $[a,b]\cap [c,d]\subset [\hat c,\hat d]$ is straightforward:

$$x\in [a,b]\cap [c,d]\implies abx=x, cdx=x$$
$$\implies \hat c \hat dx=(abc)(abd)x=ab(cdx)=x\implies x\in [\hat c, \hat d]$$

For the other containment: it is clear from the definition  that $[\hat c,\hat d] \subset [a,b]$. What is not immediately clear is that $[\hat c,\hat d] \subset [c,d]$. In fact, this containment does not hold when $[a,b]\cap [c,d]=\emptyset$. 

So suppose there exists $x \in [a,b]\cap [c,d]$. Since $\hat c$ is the gate of $c$ in $[a,b]$, we have that $\hat c$ is contained in the interval between $c$ and any element  of  $[a,b]$. In particular, $\hat c\in [c,x]$. Since $x \in [c,d]$, we thus have that $\hat c \in [c,d]$. Similarly $\hat d\in [c,d]$. So we have that from convexity that $[\hat c,\hat d] \subset [c,d]$, as required.
\end{proof}

 \subsection{Cubes and dimension }
  
The 2 point set $\{0,1\}$ has a canonical median algebra structure dictated by the majority rules axiom. The product of any two median algebras can be endowed naturally with a median algebra structure with coordinate-wise operations. The product median structure on  $\{0,1\}^n$ is called a \emph{median $n$-cube}. The \emph{rank} of a median algebra $M$, denoted $rk(M)$, is the largest $n$ for which there exists  a median $n$-cube embedded in $M$. 

Rank is additive for products. That is, for $M$ and $N$ finite rank
median algebras, $rk(M\times N)=rk(M)+rk(N)$ (see
\cite{Bowditch24}, Lemma 8.2.2).

  For a median square $\{0,1\}^2$, we use the notation $\square(a_1,a_2,a_3,a_4)$ for the points of the median square arranged cyclically: $a_1=(0,0), a_2=(1,0), a_3=(1,1)$ and $a_4=(0,1)$. We then obtain
 the median structure $a_{i-1}a_ia_{i+1}=a_i$  ($i$ taken mod 4). The term \emph{median square} in $M$ will also be taken to mean that the $a_i$'s are distinct. Sometimes we will need to address \emph{degenerate median squares} which means that  $a_{i-1}a_ia_{i+1}=a_i$, but that some of the $a_i$'s are equal. We have the following description of how a median square can degenerate. 
  
  \begin{obs}
  If $\{a_1, a_2,a_3,a_4\}$ is a degenerate median square then, up to cyclic permutation of the $a_i$'s, we have the following possibilities:
  \begin{enumerate}
  \item $a_1=a_2=a_3=a_4$, that is the square is a single point
  \item $a_1=a_2$ and $a_3=a_4$, but no other equalities. That is, the square is the interval $[a_1,a_2]$
  \end{enumerate}
   In particular, if three of the points of a median square are distinct, then the median square is non-degenerate. 
  \label{DegenerateSquare}
  \end{obs}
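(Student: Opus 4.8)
The plan is to determine exactly which coincidence patterns among $a_1,a_2,a_3,a_4$ are compatible with the four defining relations $a_{i-1}a_ia_{i+1}=a_i$ (indices mod $4$). The core of the argument is two short ``propagation'' statements, each an immediate consequence of a single median axiom applied to a well chosen pair of these relations.

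First I would prove: (a) if two cyclically adjacent vertices coincide, say $a_1=a_2$, then the opposite pair coincides, i.e.\ $a_3=a_4$. Indeed, by symmetry of the median operation $a_3 = a_2a_3a_4 = a_1a_3a_4 = a_3a_4a_1 = a_4$, where the outer two equalities are the defining relations at $i=3$ and $i=4$ and the middle one uses $a_1=a_2$. Next: (b) if two cyclically opposite vertices coincide, say $a_1=a_3$, then $a_1=a_2=a_3=a_4$. Indeed, the majority-rules axiom gives $a_2 = a_1a_2a_3 = a_1a_2a_1 = a_1$ and likewise $a_4 = a_3a_4a_1 = a_1a_4a_1 = a_1$.

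Then I would assemble the statement. Assume the median square is degenerate, so some two of the $a_i$ coincide. If some diagonal pair coincides, then after a cyclic permutation $a_1=a_3$, and (b) puts us in case (1). Otherwise no diagonal pair coincides, so the coinciding pair is adjacent; after a cyclic permutation $a_1=a_2$, and (a) gives $a_3=a_4$. Since by assumption neither $a_1=a_3$ nor $a_2=a_4$ holds, and $a_2=a_1\neq a_3=a_4$ rules out the two remaining pairs, the only coincidences are $a_1=a_2$ and $a_3=a_4$, which is case (2). The ``in particular'' clause follows at once: a degenerate square has at most two distinct vertices, so if three of the $a_i$ are pairwise distinct the square cannot be degenerate.

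There is no real obstacle here; the only thing to get right is pairing each defining relation with the correct axiom (symmetry for (a), majority rules for (b)) so that the propagation arguments collapse to one line, after which the classification is pure bookkeeping.
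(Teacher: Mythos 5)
Your proof is correct and follows essentially the same route as the paper's: show that an adjacent coincidence ($a_1=a_2$) propagates via the defining relations to $a_3=a_4$, and that a diagonal coincidence ($a_1=a_3$) forces all four points to be equal via majority rules, then sort the cases. Your write-up is in fact a bit more careful in the bookkeeping (and avoids a small typo in the paper's computation), but there is no substantive difference.
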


  \begin{proof}
  Suppose we are not in case 1 and suppose that $a_1=a_2$. Then $a_3=a_2a_3a_4=a_1a_3a_3=a_4$. We also need to rule out diagonal points being equal. So suppose we have $a_1=a_3$. then we would have $a_2=a_1a_2a_3=a_1a_2a_1=a_1$ and similarly $a_2=a_3$, and from this we would have that all four points are equal, and this would put us in case 1. 
  \end{proof}

For a median square $\square(a,b,c,d)$, we see that $a,b,c,d\in [a,c]$. Since $[a,c]$ is convex and contained in any convex set containing $a$ and $c$, we see that $[a,c]=\hull\{a,b,c,d\}$. A basic fact is that a median square has a natural product structure.

\begin{lemma}
For a median $\sigma=\square(a,b,c,d)$, there is a natural isomorphism $\hull(\sigma)=[a,c]\cong [a,b]\times [a,d]$. 
\label{Bromberg}
\end{lemma}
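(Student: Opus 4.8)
The plan is to construct the isomorphism $[a,c] \cong [a,b] \times [a,d]$ explicitly using gate maps, and then verify it is a median isomorphism. First I would define $\rho_b = \rho_{ab}: M \to [a,b]$ and $\rho_d = \rho_{ad}: M \to [a,d]$, the gate maps to the two "edge" intervals of the square (where for a gate map $\rho_{pq}(x) = pqx$). Restricting these to $[a,c] = \hull(\sigma)$, I would define $\Phi: [a,c] \to [a,b] \times [a,d]$ by $\Phi(x) = (\rho_b(x), \rho_d(x)) = (abx, adx)$. Since each gate map is a median homomorphism (by the long distributive law, as noted in the excerpt) and products carry the coordinatewise median structure, $\Phi$ is automatically a median homomorphism. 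So the entire content is to show $\Phi$ is a bijection.

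For the inverse, the natural candidate is $\Psi: [a,b] \times [a,d] \to M$ given by $\Psi(u,v) = ucv$ (the median of $u$, $c$, and $v$), or perhaps more symmetrically I would first check that this lands in $[a,c]$ and then that it inverts $\Phi$. I expect it may be cleaner to use $\Psi(u,v) = $ the median $u v c$; note $a$ and $c$ are "opposite corners," so a point with first coordinate $u \in [a,b]$ and second coordinate $v \in [a,d]$ should be recoverable as the "join" of $u$ and $v$ relative to the square, which is $ucv$ or equivalently involves $c$. I would verify: (i) $\Psi(u,v) \in [a,c]$ — this follows since $u, v \in [a,c]$ (as $[a,b], [a,d] \subset [a,c]$, using that $b, d \in [a,c]$ and Lemma on subintervals) and $[a,c]$ is convex, so its median with $c \in [a,c]$ stays inside; (ii) $\Phi \circ \Psi = \mathrm{id}$, i.e. $ab(ucv) = u$ and $ad(ucv) = v$, using the long distributive law $ab(ucv) = (abu)(abc)(abv) = u \cdot b \cdot (abv)$ — here $abu = u$ since $u \in [a,b]$, $abc = b$ since $b$ is the gate of $c$ in $[a,b]$ and in a square the gate of $c$ onto $[a,b]$ is $b$, and $abv$ needs to be computed; (iii) $\Psi \circ \Phi = \mathrm{id}$, i.e. $(abx)c(adx) = x$ for all $x \in [a,c]$.

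The main obstacle will be the several interval-membership and gate computations inside the square: specifically, identifying $abc = b$ and $adc = d$ (the gates of the opposite corner $c$ onto the edges), and then pushing the long distributive law through to collapse expressions like $(abx)c(adx)$ back to $x$. For the gate identities, I would argue that since $\square(a,b,c,d)$ is a median square, $b = a b c$ directly from the square relation $a_{i-1}a_i a_{i+1} = a_i$ with the cyclic labeling $a_1 = a, a_2 = b, a_3 = c, a_4 = d$ — indeed $a_1 a_2 a_3 = a_2$ reads $abc = b$, and $a_3 a_4 a_1 = a_4$ reads $cda = d$. For step (iii), with $x \in [a,c]$ so $acx = x$, I would expand $(abx)c(adx)$ by the long distributive law in stages, using $x = acx$ to reintroduce $a$ and $c$ where needed, and repeatedly apply majority rule and the square relations; I anticipate a short chain of identities of the same flavor as the lemma proofs already in the excerpt. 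Once bijectivity is established, naturality of the construction (independence of auxiliary choices, compatibility with the ambient median structure) is immediate because everything was defined via gate maps, which are canonical.
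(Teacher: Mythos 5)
Your proposal is correct and takes essentially the same route as the paper: its proof uses exactly your two maps $x\mapsto(abx,adx)$ and $(u,v)\mapsto uvc$ and checks they are mutually inverse (hence median isomorphisms) via the distributive law, the square relations $abc=b$, $bcd=c$, and the observation $\rho_{ab}([a,d])=a$, which is precisely the computation $abv$ you deferred ($abv=ab(adv)=(aba)(abd)v=a$). The other step you left anticipated also collapses in one line, e.g.\ $(abx)(adx)c=ax(bdc)=axc=x$ for $x\in[a,c]$, so your plan goes through as written.
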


\begin{proof}

We consider the maps:

\begin{align*}
    \phi: [a,b]\times [a,d] &\to [a,c] \\        
        (y,z)\ \quad &\mapsto yzc
\end{align*}
\begin{align*}
\psi:[a,c] &\to [a,b]\times [a,d]  \\
x &\mapsto (abx, abx).
\end{align*}
We need to check that these maps are inverses of one another. 

First, observe $\rho_{ab}([a,d])=a$: if $z\in[a,d]$ then $abz=ab(adz)=(aba)(abd)z=aaz=a$. Similarly, $\rho_{ad}([a,b])=a$. Now we compute:
$$\phi\circ\psi(x)= (abx)(adx)c=ax(bdc)=axc=x$$
$$\psi\circ\phi(y,z)=(ab(yzc), ad(yzc))=((aby)(abz)c, (ady)(adz)c) $$
$$= ((aby)ac,a(adz)c=(aby, adz)=(y,z)$$

\end{proof}

Given a subset $S\subset M$, we define the convex hull of $S$, $\hull(S)$, to be the intersection of all convex sets containing $S$. Note that intervals are convex, and that the convex hull of two points $a,b$ is $[a,b]$.

\begin{obs}
Let $C=\{0,1\}^n$ be a standard median cube. Let $\overline 0 =
(0,\ldots,0)$ and $\overline 1= (1,\ldots,1)$. Then $C=[\overline
  0,\overline 1]$. 
\end{obs}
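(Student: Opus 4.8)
The plan is to compute directly in coordinates. Recall that the interval $[\overline 0,\overline 1]$ is by definition the set of $x\in C$ with $\overline 0\,\overline 1\,x=x$, so the inclusion $[\overline 0,\overline 1]\subseteq C$ is automatic. The substance is the reverse inclusion $C\subseteq[\overline 0,\overline 1]$, and for this I would first record the behavior of the median operation on the two-point median algebra $\{0,1\}$: by the majority rules axiom, $0\cdot 1\cdot 0 = 0\cdot 0\cdot 1 = 0$ and $0\cdot 1\cdot 1 = 1\cdot 1\cdot 0 = 1$, hence $0\cdot 1\cdot t = t$ for every $t\in\{0,1\}$.

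Now let $x=(x_1,\dots,x_n)\in C$. Since the product median structure is defined coordinatewise, the $i$-th coordinate of $\overline 0\,\overline 1\,x$ is $0\cdot 1\cdot x_i = x_i$ by the previous paragraph, so $\overline 0\,\overline 1\,x = x$ and therefore $x\in[\overline 0,\overline 1]$. Combining the two inclusions gives $C=[\overline 0,\overline 1]$. There is no real obstacle here; the only point requiring any care is to invoke the correct definition of the median structure on $\{0,1\}$ and the coordinatewise nature of the product operation, after which the verification is immediate.
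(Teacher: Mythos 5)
Your proof is correct and follows essentially the same route as the paper: verify $0\cdot 1\cdot t=t$ in the two-point median algebra via majority rules, then apply the coordinatewise definition of the product median to conclude $\overline 0\,x\,\overline 1=x$ for all $x\in\{0,1\}^n$. The explicit separation into two inclusions is a minor presentational addition, not a different argument.
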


\begin{proof}
This is a straightfoward calculation:  For  $M=\{0,1\}$, we have $0x1=x$ for all $x\in\{0,1\}$, and since the median in a product is computed componentwise, we have that  $\overline{0}x \overline{1}=x$ for all $x\in\{0,1\}^n$.
\end{proof}

From this we obtain the following. 

\begin{cor}
If $M$ is any median algebra and $C$ is a median cube in $M$, then $\hull(C)=[a,b]$, where $a$ and $b$ are diagonally opposite vertices of $C$.
\label{CubeIsAnInterval}
\end{cor}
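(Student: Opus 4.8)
The plan is to deduce this directly from the preceding Observation together with the formal properties of convex hulls, so the argument will be short. First I would fix notation: a median $n$-cube $C$ in $M$ is by definition (up to isomorphism) the image of an injective median homomorphism $\iota\colon\{0,1\}^n\to M$, and I take $a=\iota(\overline 0)$ and $b=\iota(\overline 1)$ to be the images of the two diagonally opposite vertices $\overline 0=(0,\dots,0)$ and $\overline 1=(1,\dots,1)$.

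The first step is to show $C\subseteq[a,b]$. By the Observation, every $x\in\{0,1\}^n$ satisfies $\overline 0\,x\,\overline 1=x$ in the standard cube. Applying the median homomorphism $\iota$ and using that it commutes with the ternary operation gives $a\,\iota(x)\,b=\iota(\overline 0\,x\,\overline 1)=\iota(x)$, so $\iota(x)\in[a,b]$. Hence every vertex of $C$ lies in $[a,b]$; since $[a,b]$ is convex and $\hull(C)$ is the smallest convex set containing $C$, this gives $\hull(C)\subseteq[a,b]$.

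For the reverse inclusion, observe that $a$ and $b$ are themselves vertices of $C$ and hence lie in $\hull(C)$. Since $\hull(C)$ is convex, it contains the interval $[a,b]$, i.e.\ $[a,b]\subseteq\hull(C)$. Combining the two inclusions yields $\hull(C)=[a,b]$.

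I expect essentially no obstacle: the only point requiring care is the bookkeeping that a ``median cube in $M$'' really is a median subalgebra, so that the identity $\overline 0\,x\,\overline 1=x$ transfers from the abstract cube to $M$ via the embedding; everything else is the formal behavior of $\hull$ and convexity. (For $n=2$ this is already implicit in Lemma~\ref{Bromberg}, where $\hull(\square(a,b,c,d))=[a,c]$ was identified with the product $[a,b]\times[a,d]$; the present corollary is the higher-rank analogue of the identification of the hull of a cube with an interval.)
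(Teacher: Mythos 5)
Your argument is correct and is essentially the same deduction the paper intends: the Observation gives $\overline 0\,x\,\overline 1=x$ in the standard cube, the median embedding transfers this to $C\subseteq[a,b]$, and the two convexity inclusions (intervals are convex; the hull is convex and contains $a,b$) give $\hull(C)=[a,b]$. The paper leaves this as an immediate consequence of the Observation, and your write-up just makes that implicit reasoning explicit.
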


Lemma \ref{Bromberg} can be generalized to median cubes.

\begin{cor}
Let $f:\{0,1\}^n\hookrightarrow M$ be a median cube $C$. Let $a=f(0,\ldots,0)$, $b=f(1,\ldots,1)$ and for $i=1,\ldots,n$, let $a_i=f(0,\ldots,0,\stackrel{i}{1},0,\ldots,0)$. Then $[a,b]=\hull(C)\cong \prod_i [a,a_i]$. 
\label{BrombergNdim}
\end{cor}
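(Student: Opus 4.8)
The plan is to reduce Corollary \ref{BrombergNdim} to the two-dimensional case (Lemma \ref{Bromberg}) by induction on $n$, factoring off one coordinate at a time. For $n=1$ there is nothing to prove and $n=2$ is Lemma \ref{Bromberg}. For the inductive step, write $\{0,1\}^n = \{0,1\}^{n-1}\times\{0,1\}$, let $C' = f(\{0,1\}^{n-1}\times\{0\})$ be the sub-cube with last coordinate $0$, let $b' = f(1,\dots,1,0)$ be the vertex of $C'$ diagonally opposite $a$, and note that $\hull(C') = [a,b']$ by Corollary \ref{CubeIsAnInterval}. Also set $c = f(0,\dots,0,1)$, so that $a, b', c$ span a median square $\square(a, b', b, c)$ inside $M$ (one checks the median relations hold coordinatewise, and they are distinct provided the original cube is embedded). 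By Lemma \ref{Bromberg} applied to this square we get $[a,b] = \hull(\square(a,b',b,c)) \cong [a,b']\times[a,c]$, and by the inductive hypothesis $[a,b'] \cong \prod_{i=1}^{n-1}[a,a_i]$, so $[a,b] \cong \prod_{i=1}^{n-1}[a,a_i]\times[a,c]$, which is the desired conclusion once we identify $c$ with $a_n$.

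First I would record the easy facts: that $C = [a,b]$ (the Observation on standard cubes plus the fact that $f$ is a median homomorphism, hence carries $[\overline 0,\overline 1]$ into $[a,b]$, combined with Corollary \ref{CubeIsAnInterval}), and that the relevant four-tuples actually form median squares. The latter is a coordinatewise computation in $\{0,1\}^n$ pushed forward by $f$: the points $(0,\dots,0)$, $(1,\dots,1,0)$, $(1,\dots,1)$, $(0,\dots,0,1)$ satisfy $a_{i-1}a_ia_{i+1}=a_i$ cyclically, and since at least three of them are distinct when $f$ is injective, Observation \ref{DegenerateSquare} guarantees the square is non-degenerate. Then Lemma \ref{Bromberg} applies verbatim.

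The one point requiring genuine care — and the main obstacle — is making the isomorphisms compatible across the induction, i.e. checking that the factor $[a,b']$ appearing in the square decomposition $[a,b]\cong[a,b']\times[a,c]$ is literally the convex hull of the sub-cube $C'$ (so that the inductive hypothesis is applicable to it, with the correct edge-intervals $[a,a_i]$, $i<n$), and that under the composite isomorphism the $i$-th factor really is $[a,a_i]$. This amounts to tracking the explicit maps $\phi,\psi$ from the proof of Lemma \ref{Bromberg} and verifying that $\phi$ restricted to $[a,b']\times\{a\}$ is the inclusion $[a,b']\hookrightarrow[a,b]$ and that $\psi$ composed with the first projection is the gate map $\rho_{ab'}$; these are short gate-map computations using $\rho_{ab}([a,c]) = a$ and its symmetric counterpart, exactly as in the $n=2$ case. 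Once this bookkeeping is in place the statement follows; everything else is routine.
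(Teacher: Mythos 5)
Your proposal is correct and follows essentially the same route as the paper: induction on $n$, forming the median square on $a$, $f(1,\ldots,1,0)$, $f(1,\ldots,1)$, $f(0,\ldots,0,1)$, and combining Lemma \ref{Bromberg} with the inductive hypothesis applied to the sub-cube with last coordinate $0$. The extra bookkeeping you flag about compatibility of the isomorphisms is a reasonable point of care, but it is routine and the paper simply omits it.
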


\begin{proof}
The proof is by induction on $n$. We know it is true for $n=2$. For larger $n$, we consider the median square $\square(a,b,c,d)$ in $M$ spanned by the following 4 points: 

$$a, b=f(1,\ldots,1,0), c=f(1,\ldots,1), a_n= f(0,\ldots,0,1)$$
The first two are the diagonal points of a median cube of one lower dimension. So by Lemma \ref{Bromberg} we have that 
$C=[a,b]\times[a,a_n]$ and by induction $[a,b]=\prod_{i=1}^{n-1} [a,a_i]$. This gives the claim 
\end{proof}

In the situation of Corollary \ref{BrombergNdim}, the pairs $\{a,a_i\}$'s are called  the \emph{edges} of the median cube $C$ and we say that these edges \emph{span} $C$. 

The following basic lemma lies at the heart of the connection between median structures and the flag condition in CAT(0) condition for cube complexes: namely that a collection of edges at a point that pairwise span squares actually spans a cube. 

\begin{lemma}
Let $a, a_1,\ldots,a_n\in M$ be distinct points in $M$ such that each pair $\{a_i,a_j\}$ span a median square with $a$. That is, there exists $a_{ij}\in M$ such that $\square(a,a_i,a_{ij},a_j)$ is a median square. Then there exists a median cube spanned by the $\{a,a_i\}$'s. 
\label{Flag}
\end{lemma}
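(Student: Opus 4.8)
The plan is to induct on $n$. The cases $n\le 2$ require nothing new: $\{a,a_1\}$ is a $1$-cube, and for $n=2$ the hypothesis is exactly the square $\square(a,a_1,a_{12},a_2)$. So suppose $n\ge 3$. Applying the inductive hypothesis to $a,a_1,\dots,a_{n-1}$ produces an $(n-1)$-cube $C'=f(\{0,1\}^{n-1})$ with $f(0,\dots,0)=a$ and $f(e_i)=a_i$ for $i<n$; let $b'=f(1,\dots,1)$, so $\hull(C')=[a,b']$ by Corollary \ref{CubeIsAnInterval}. The crux of the argument is to show that $a_n$ and $b'$ span a median square over $a$, i.e.\ that there is $c\in M$ with $\square(a,a_n,c,b')$. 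Granting this, Lemma \ref{Bromberg} supplies an isomorphism $[a,c]\cong[a,a_n]\times[a,b']$, and inside this product sits the median subalgebra $\{a,a_n\}\times f(\{0,1\}^{n-1})\cong\{0,1\}\times\{0,1\}^{n-1}=\{0,1\}^n$. A short computation with gate maps (each $a_i$, $i<n$, has gate $a$ in the $[a,a_n]$--factor, coming from the square $\square(a,a_i,a_{in},a_n)$, and gate $a_i$ in the $[a,b']$--factor since $a_i\in\hull(C')$) shows that this $n$-cube realizes $a$ at the vertex $(0,\dots,0)$ and $a_i$ at the vertex $e_i$, which is the assertion of the lemma.

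To obtain the square $\square(a,a_n,c,b')$ I would isolate a \emph{square-gluing lemma}: if $\square(a,p,r,q)$ is a median square, with $a$ and $r$ the diagonal pair, and $e\in M$ spans a median square over $a$ with $p$ and with $q$ --- say $\square(a,p,p',e)$ and $\square(a,q,q',e)$ --- then $e$ spans a median square over $a$ with $r$; concretely, $\square(a,e,rp'q',r)$ works. Granting this, one proves by induction on the dimension of a cube that its diagonal vertex inherits the same ``perpendicularity'' as its edges: if $D$ is a cube on $\{a,d_1,\dots,d_k\}$ with diagonal vertex $d$, and $e\in M$ spans a square over $a$ with each $d_i$, then $e$ spans a square over $a$ with $d$. (The inductive step writes $D$ as the product of a codimension-one face with an edge; by Corollary \ref{BrombergNdim} the diagonal vertex $d$ is the far corner of a median square spanned by the diagonal vertex $d'$ of the face and the edge endpoint $d_k$, so the gluing lemma applies with $r=d$, $p=d'$, $q=d_k$.) Taking $D=C'$ and $e=a_n$ gives precisely the square needed above.

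The main obstacle is the square-gluing lemma. With $r':=rp'q'$ one must verify the four defining identities of $\square(a,e,r',r)$, namely $rae=a$, $aer'=e$, $er'r=r'$ and $r'ra=r$. The inputs are: the membership relations $r\in[p,q]$, $p,q\in[a,r]$, $p'\in[p,e]$, $q'\in[q,e]$, $a\in[p,e]\cap[q,e]$, $e\in[a,p']\cap[a,q']$, which are just square identities read off directly; the product decompositions $[a,r]\cong[a,p]\times[a,q]$, $[a,p']\cong[a,p]\times[a,e]$, $[a,q']\cong[a,q]\times[a,e]$ from Lemma \ref{Bromberg}, whence on gate maps $\rho_{a,e}(p)=\rho_{a,e}(q)=a$, $\rho_{a,e}(p')=\rho_{a,e}(q')=e$, and $\rho_{a,r}(e)=a$ (the last because $\rho_{a,p}\circ\rho_{a,r}=\rho_{a,p}$ and $\rho_{a,q}\circ\rho_{a,r}=\rho_{a,q}$, using the long distributive law, the relations $apr=p$, $aqr=q$, and idempotence of gate maps); and convexity of intervals. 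For instance, $r=pqr$ together with $\rho_{a,e}(p)=\rho_{a,e}(q)=a$ and the long distributive law give $aer=a$, i.e.\ $a\in[e,r]$; convexity then yields $[a,r]\subseteq[e,r]$, hence $p,q\in[e,r]$, hence $p',q'\in[e,r]$, hence $r'=rp'q'\in[e,r]$, which is $er'r=r'$. The identities $aer'=e$ and $r'ra=r$ come out of analogous short median manipulations, the second using $[a,r]\cong[a,p]\times[a,q]$ to compute $\rho_{a,r}(p')=p$ and $\rho_{a,r}(q')=q$. Finally, degeneracy causes no trouble: since $a,a_1,\dots,a_n$ are distinct and the construction forces the relevant triples of corners (e.g.\ $a,a_n,b'$) to be distinct, Observation \ref{DegenerateSquare} guarantees the squares produced along the way are genuine.
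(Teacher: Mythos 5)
Your proof is correct, but it takes a genuinely different route from the paper's. The paper also inducts on $n$, but it invokes the inductive hypothesis twice, producing two $(n-1)$-cubes (spanned by $\{a,a_1\},\dots,\{a,a_{n-1}\}$ and by $\{a,a_1\},\dots,\{a,a_{n-2}\},\{a,a_n\}$) which share the $(n-2)$-face on $a_1,\dots,a_{n-2}$; it then applies Lemma \ref{rectangles are squares} to the diagonal rectangles of these two cubes, which share the diagonal edge $\{a,a'\}$ of the common face, to get the square $\square(a_{n-1},x,y,a_n)$, and reads off the $n$-cube from the resulting product structure via Lemma \ref{Bromberg}. You instead use the inductive hypothesis only once, for the single cube $C'$ on $a_1,\dots,a_{n-1}$, and manufacture the missing perpendicularity $\square(a,a_n,c,b')$ from a corner-gluing lemma that is not the paper's Lemma \ref{rectangles are squares}: theirs glues two squares along a shared edge and the fourth corner is already present, while yours glues three squares around the corner $a$ and constructs the fourth corner as the median $rp'q'$, propagating perpendicularity to the diagonal vertex $b'$ by a secondary induction over faces of $C'$. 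I checked the four identities you claim for $\square(a,e,rp'q',r)$ (via $aer=a$ from the long distributive law, convexity of $[e,r]$, and the gate computations $\rho_{a,r}(p')=p$, $\rho_{a,r}(q')=q$, $\rho_{a,e}(p')=\rho_{a,e}(q')=e$), and they do hold, so your auxiliary lemma is sound; likewise the identification of the final cube as $\{a,a_n\}\times C'$ inside $[a,a_n]\times[a,b']$ works. The paper's route buys brevity, since Lemma \ref{rectangles are squares} is a one-line computation and no secondary induction is needed; yours buys a reusable, explicit statement that perpendicularity at $a$ passes from the edges of a cube to its diagonal vertex. The one detail you wave at rather than prove is non-degeneracy: the case that matters is $a_n\ne b'$, and it deserves its one-line argument --- if $a_n=b'$ then each $a_i$ with $i<n$ lies in $[a,b']=[a,a_n]$, so $aa_na_i=a_i$, while the square $\square(a,a_i,a_{in},a_n)$ gives $a_ia_na=a$, forcing $a_i=a$, a contradiction; with $a,a_n,b'$ distinct, Observation \ref{DegenerateSquare} applies as you say (and in any case the bijection in Lemma \ref{Bromberg} uses only the median relations, so the embedding of $\{a,a_n\}\times C'$ is unaffected).
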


  \begin{lemma}\label{rectangles are squares}
    If $\square(a,b,c,d)$ and $\square(x,y,c,d)$ are median squares then so is
    $\square(a,b,y,x)$. 
  \end{lemma}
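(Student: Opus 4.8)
We are given two median squares sharing the edge $\{c,d\}$: the square $\square(a,b,c,d)$, which cyclically reads $a,b,c,d$ (so $abc=b$, $bcd=c$, $cda=d$, $dab=a$), and the square $\square(x,y,c,d)$, which reads $x,y,c,d$ (so $xyc=y$, $ycd=c$, $cdx=d$, $dxy=x$). We want to conclude that $\square(a,b,y,x)$ is a median square, i.e. that $aby=b$, $byx=y$, $yxa=x$, $xab=a$, together with the non-degeneracy clause (four distinct points) — though from Observation \ref{DegenerateSquare} it suffices to check three of the points are distinct, and in any case the algebraic identities are the heart of the matter.

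**The plan.**

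The plan is to work entirely with the gate-map machinery and the long distributive law rather than to expand median products by brute force. The key observation is that $c,d$ both lie in $[a,b]$ and in $[x,y]$, so by Lemma \ref{Bromberg} the hull $[a,c] = \hull(\square(a,b,c,d))$ splits as $[a,b]\times[a,d]$, and similarly for the other square. I would first establish that the gate map $\rho = \rho_{cd}: M \to [c,d]$ sends $a\mapsto d$, $b\mapsto c$ (from the square relations, e.g. $cda = d$) and likewise $x\mapsto d$, $y\mapsto c$. Thus $a$ and $x$ have the same gate $d$ on $[c,d]$, and $b$ and $y$ have the same gate $c$. The next step is to use Lemma \ref{ConvexPreimages}: the singletons $\{d\}$ and $\{c\}$ are convex subsets of $[c,d]$, so $\rho^{-1}(d)$ and $\rho^{-1}(c)$ are convex, and they contain $\{a,x\}$ and $\{b,y\}$ respectively; hence $[a,x]\subset\rho^{-1}(d)$ and $[b,y]\subset\rho^{-1}(c)$. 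This geometric picture — two parallel convex "walls" $\rho^{-1}(c), \rho^{-1}(d)$, with $\{a,b\}$ going across and $\{x,y\}$ going across — is exactly the configuration of a rectangle whose two "horizontal" sides degenerate onto a common edge, and it should force $\square(a,b,y,x)$ to be a square.

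**Carrying it out.**

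Concretely, to verify $aby = b$: compute $aby = ab(y)$, and use that $y = cyb$... rather, I would instead verify the four identities by the substitution trick used repeatedly in the excerpt (as in the proof that $[a,b]\cap[b,c]\cap[a,c]=\{abc\}$): write $b = abc$ (square 1) and $y = xyc = cyx$, then expand $aby = ab(xyc)=(abx)(aby)c$ by distributivity, and separately pin down $abx$ using $x = cdx$-type relations to show $abx = a$; this should collapse $aby$ to $b$. The symmetric computations give $xab = a$, and the "crossing" identities $byx = y$, $yxa = x$ follow by the same method with the roles of the pairs $\{a,b\}$ and $\{x,y\}$ swapped, using that $a,x$ share the gate $d$ and $b,y$ share the gate $c$. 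Finally, for non-degeneracy: since $a,b,c,d$ are distinct and $x,y,c,d$ are distinct, one checks $a\neq b$ and then rules out the degenerate patterns of Observation \ref{DegenerateSquare} for $\square(a,b,y,x)$ — the only real worry is $a = x$ or $b = y$, which would actually still leave a (possibly degenerate) square, so I would simply note that the lemma is about the square relations holding and invoke Observation \ref{DegenerateSquare} to describe any degeneracy rather than claiming all four are automatically distinct.

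**Main obstacle.**

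The main obstacle I anticipate is the bookkeeping in showing $abx = a$ (equivalently $\rho_{ab}(x) = a$): this is where the hypothesis that $\{x,y,c,d\}$ forms a square with the \emph{same} edge $\{c,d\}$ and the \emph{same} cyclic orientation really gets used, and getting the orientation bookkeeping right — i.e. that $x$ is gated to the $d$-end of $[c,d]$ and hence to the $a$-end of $[a,b]$, not the $b$-end — is the crux. If the orientations were mismatched the conclusion would be false, so the proof must genuinely exploit the cyclic labelling; I would double-check this by first doing the rank-$2$ / explicit coordinate model ($[a,c]\cong[a,b]\times[a,d]$ via Lemma \ref{Bromberg}) to fix the correct picture, then translate it into the algebraic identities.
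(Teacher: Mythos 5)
Your setup and the gate-map picture are correct ($\rho_{cd}$ sends $a$ and $x$ to $d$, and $b$ and $y$ to $c$), and the auxiliary identity $abx=a$ that you want is indeed true. But the pivotal computation you propose for $aby=b$ has a genuine gap: substituting $y=xyc$ and applying distributivity gives $aby=ab(xyc)=(abx)(aby)c$, in which the unknown $aby$ reappears on the right-hand side. Even after establishing $abx=a$, this identity only says $aby=a(aby)c$, i.e.\ $aby\in[a,c]$ --- a condition satisfied, for instance, by $aby=a$ --- so nothing ``collapses'' to $b$. There is also a misquoted relation: in $\square(x,y,c,d)$ the defining identities are $dxy=x$, $xyc=y$, $ycd=c$, $cdx=d$; in particular $cdx=d$, not $x=cdx$ (though this looks like shorthand rather than a real misunderstanding).

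The repair is to substitute for the pair being projected, namely $a$ and $b$, rather than for $y$: writing $a=dab$ and $b=cab$ from the first square and then applying distributivity, one gets $aby=(dab)(cab)y=ab(cdy)=abc=b$, where $cdy=c$ is the relation of the second square at $c$. This is exactly the paper's one-line proof; the same computation gives $abx=ab(cdx)=abd=a$, and the remaining identities $byx=y$, $yxa=x$ follow by the evident symmetry (swapping the roles of the two squares and of the pairs $\{a,x\}$, $\{b,y\}$), which is all the paper says. Your remarks on degeneracy are fine --- the paper's proof does not address distinctness at all --- and the convex-fiber picture via Lemma \ref{ConvexPreimages}, while correct, is not needed once the computation is done this way.
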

  
  \begin{proof}
  By symmetry, it suffices to check $aby=b$:
  
  $$aby=(dab)(cab)y=ab(cdy)=abc=b.$$
  \end{proof}

  \begin{proof}[Proof of Lemma \ref{Flag}]
    We argue by induction on $n$, with the case $n=2$ being
    vacuous. Suppose $n>2$. We refer to Figure \ref{Figure:Flag} for this discussion. 
    
    \begin{figure}[h]
\begin{center}
\includegraphics[scale=.7]{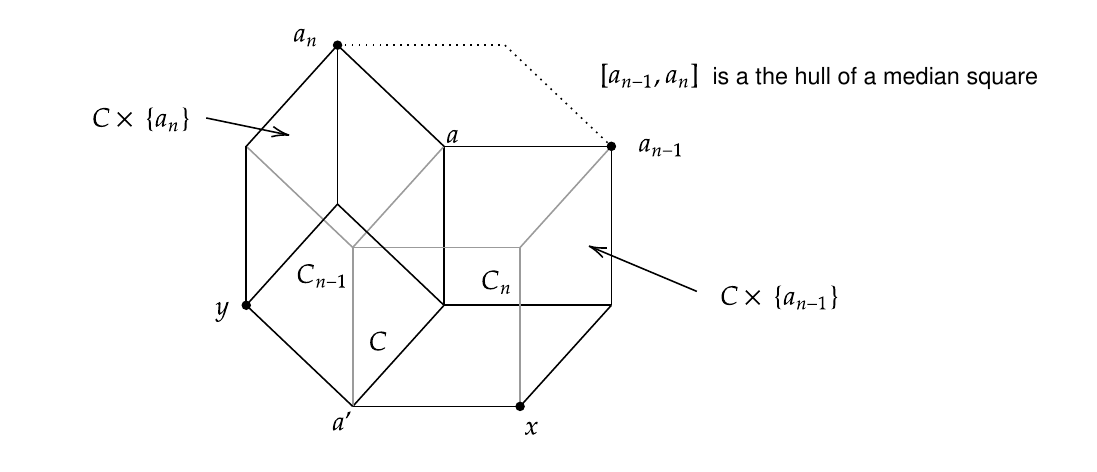}
\end{center}
\caption{Two median $n$-cubes $C_{n-1}$ and $C_n$ meet along an $(n-1)$-cube $C$ to give a bigger cube. The diagonals provide a median square $\square(a_{n-1},a_n,y,x)$, which has a structure of a product of an $(n-2)$-cube and a square, which gives an $n$-cube.}
\label{Figure:Flag}
\end{figure}

    In the median cube $C$ spanned by the edges  $\{a,a_1\},\cdots,\{a,a_{n-2}\}$, let $a'$ be the vertex diagonally opposite to
    $a$.  By Lemma \ref{Bromberg}, the  hull of the median cube $C_n$ spanned by $\{a,a_1\},\cdots,\{a,a_{n-1}\}$ is the product
    $\hull(C_n)=\hull(C)\times [a,a_{n-1}]$. In these product coordinates, we have an $(n-1)$-cube $C\times \{a_{n-1}\}$, which is the $(n-1)$-face of $C_n$ opposite $C$. Let $x$ denote the diagonally opposite point to $a_n$ in $C\times\{a_{n-1}\}$.
    Similarly, the hull of the median cube $C_{n-1}$  spanned by
  $\{a,a_1\}, \cdots, \{a,a_{n-2}\},\{a,a_n\}$ is the product $\hull(C_{n-1})=\hull(C)\times [a,a_n]$,
  we have an $(n-1)$-face $C\times \{a_n\}$ opposite to $C$ in $C_{n-1}$, and we let $y$ denote the diagonally opposite point to $a_{n-1}$ in $C\times\{a_{n-1}\}$. 
But the four vertices  $\{a_{n-1},a_n,y,x\}$ form a median square by Lemma \ref{rectangles
      are squares}, since the median squares $\square(a_{n-1},x,a',a)$ and $\square(a_n,y,a',a)$ share a median edge $\{a',a\}$.
    Now by Lemma \ref{Bromberg} we have that the hull of this median square
    is the product $\hull(C)\times [a_{n-1},a_n]$, which is hull of the desired
    $n$-cube, since $\hull(C)$ is the hull of a median $(n-2)$-cube and $[a_{n-1},a_n]$ is the hull of a median square.
  \end{proof}

We will need the following lemma which produces median squares.

\begin{lemma}[Double projection]
Suppose that $a,b,c,d$ are distinct elements of $M$, such that $abc=b$
and $bcd=c$. Let $\hat d = \rho_{cd}(a)$ and $\hat a=\rho_{ab}(d)$. Then $\hat a,b,c, \hat d$ form a (possibly degenerate) median square.
\label{DoubleProjection}
\end{lemma}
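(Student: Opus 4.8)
The plan is to set $\hat a = \rho_{ab}(d) = abd$ and $\hat d = \rho_{cd}(a) = cda$, and verify directly from the median axioms that the cyclic identities $\hat a\, b\, c = b$, $b\, c\, \hat d = c$, $c\, \hat d\, \hat a = \hat d$, and $\hat d\, \hat a\, b = \hat a$ all hold; by the definition of a (possibly degenerate) median square given in the text, that is exactly what needs to be checked. The two hypotheses $abc=b$ (i.e. $c\in[a,b]$... more precisely $b\in[a,c]$) and $bcd=c$ ($c\in[b,d]$) are the only inputs, and each of the four identities should reduce to a short computation using symmetry, distributivity, the long distributive law $uv(xyz)=(uvx)(uvy)(uvz)$, and the retraction identity \eqref{retraction} $uv(uvx)=uvx$.

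The key steps, in order: First, compute $\hat a\, b\, c = (abd)bc$. Using the long distributive law to pull the median inside, $(abd)(b)(c)$... I would instead write $(abd)bc = b(abd)c = b(a\,b\,d)c$ and apply distributivity so that it becomes $(b a c)(b b c) d$-type rearrangements — the point is that $b\in[a,c]$ (from $abc=b$) lets one collapse terms, giving $b$. Second, $b c \hat d = bc(cda)$; since $bcd=c$, one expects $bc(cda) = (bcc)(bcd)a = c\cdot c\cdot a = c$ after applying the long distributive law, which is clean. Third, $c\hat d \hat a = c(cda)(abd)$: apply the long distributive law with respect to the pair $\{c,\text{?}\}$ or expand $(cda)(abd)$ first; one should land on $cda = \hat d$ using that $\hat d \in [c,d]$ and that $\hat a$ projects appropriately — this is the identity asserting $\hat d \in [c,\hat a]$. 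Fourth, symmetrically (swapping the roles $a\leftrightarrow d$, $b\leftrightarrow c$, which exchanges the two hypotheses), $\hat d \hat a b = \hat a$.

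The main obstacle I anticipate is the third identity, $c\,\hat d\,\hat a = \hat d$ (and its mirror): unlike the first two, it mixes both projected points $\hat a$ and $\hat d$, so the collapse does not follow from a single hypothesis but needs both $abc=b$ and $bcd=c$ to be used together, presumably by first showing an intermediate fact such as $\hat d \in [b,c]$ or $\rho_{ab}(\hat d) = \hat a$ (compatibility of the two gate maps). A natural way to organize this is to first prove the auxiliary claim that $\hat a = ab\hat d$ and $\hat d = cd\hat a$ — i.e. each projected point is the gate-projection of the other — since that symmetry immediately feeds the two "diagonal" identities; this auxiliary claim itself should come out of expanding $ab(cda)$ via distributivity and using $abc=b$. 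The first identity should reduce similarly: $\hat a\, b\, c = (abd)bc = b(abd)c$; applying the long distributive law, $= (bac)(bdc)\cdot$ — wait, more carefully $b(abd)c$ needs distributivity $b(a b d)c$... I would write $ab(dbc)$ is not quite it; the cleanest route is $\hat a bc = abd\cdot bc$, apply the long distributive law to $ab(d\cdot b\cdot c)$? No: one applies $\,xy(uvw)=(xyu)(xyv)(xyw)\,$ reading $(abd)(b)(c)$ as $bc(abd) = (bca)(bcb)(bcd)$... but this uses $\{b,c\}$ as the distinguished pair and yields $(bca)\cdot b\cdot c$, and since $abc=b$ we get $b\cdot b\cdot c = b$. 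So in fact every identity reduces to the long distributive law plus one hypothesis, except the diagonal ones which need the auxiliary symmetry claim — establishing that claim cleanly is where I would spend the real effort.
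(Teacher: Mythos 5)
Your overall strategy coincides with the paper's: verify the four cyclic identities $\hat a bc=b$, $bc\hat d=c$, $c\hat d\hat a=\hat d$, $\hat d\hat a b=\hat a$, and note that distinctness is not required since the square may be degenerate. Your treatment of the first two identities is correct and essentially the paper's (the paper phrases it as $\rho_{bc}([a,b])=b$ and $\rho_{bc}([c,d])=c$ applied to $\hat a\in[a,b]$, $\hat d\in[c,d]$; your direct expansion $bc(abd)=(bca)(bcb)(bcd)=b$ and $bc(cda)=(bcc)(bcd)a=c$ is the same computation). Where you diverge is on the diagonal identities, and this is also where your write-up stops short of a proof: you propose the auxiliary claim $\hat a=ab\hat d$, $\hat d=cd\hat a$ and then appeal to ``$\hat a$ projects appropriately,'' but you neither establish the claim nor the gate fact you are implicitly invoking (that $\rho_{cd}(x)\in[x,y]$ for every $y\in[c,d]$, which is what converts $\hat d=\rho_{cd}(\hat a)$ into $c\hat d\hat a=\hat d$). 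Your route does close: $cd\hat a=cd(abd)=(cda)(cdb)d=\hat d\,c\,d=\hat d$ using $bcd=c$ and $\hat d\in[c,d]$, symmetrically $ab\hat d=\hat a$, and the gate property follows from $xy(abx)=(abx)(aby)x=ab(xyx)=abx$ when $aby=y$. But as submitted, the crux you yourself flag is left open, so the argument is incomplete rather than merely different.

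For comparison, the paper's trick makes the diagonal step a one-liner and avoids any auxiliary claim: observe that \emph{both} projected points are medians over the same pair $\{a,d\}$, namely $\hat a=adb$ and $\hat d=adc$, so the distributive law with distinguished pair $\{a,d\}$ collapses everything at once,
$$\hat a\hat d b=(adb)(adc)b=ad(bcb)=adb=\hat a,$$
with $c\hat d\hat a=\hat d$ following by the symmetry $a\leftrightarrow d$, $b\leftrightarrow c$ of the hypotheses. I recommend either adopting this computation or, if you keep your mutual-projection route, writing out the two short distributivity calculations above and the gate property explicitly.
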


\begin{figure}[h]
\begin{center}
\includegraphics{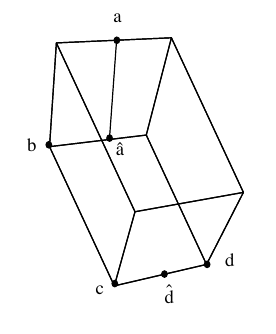}
\end{center}
\caption{Projecting $a$ onto $[c,d]$ and $d$ onto $[a.b]$ to get a median square.}
\label{Figure:DoubleProjection}
\end{figure}

\begin{proof}
We refer to Figure \ref{Figure:DoubleProjection} for this discussion. Since $\rho_{bc}([a,b])=b$, and $\hat a \in [a,b]$, we have that $\hat abc=b$. Similarly,
$bc\hat d=c$. We are left to show $b\hat a\hat d=\hat a$ and $c\hat d \hat a = \hat d$. We show the first and the second is the same. It is another application of the distributive law:

$$\hat a \hat d b= (adb)(adc)b=ad(bbc)=adb=\hat a.$$

\end{proof}

Finally, we state a technical, but useful lemma characterizing the situation in which the half open interval is not convex.

\begin{lemma}
   Let $a,b\in M$. Then $(a,b]$ is not convex if and only if there exists a median square in $[a,b]$ with $a$ as a vertex. 
   \label{SquareInInterval}
   \end{lemma}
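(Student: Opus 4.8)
\textbf{Proof proposal for Lemma \ref{SquareInInterval}.}

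The plan is to prove both directions by direct median computation, using the product structure of median squares (Lemma \ref{Bromberg}) and the gate map to an interval.

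For the easy direction, suppose $\square(a,x,z,y)$ is a median square contained in $[a,b]$ with $a$ as a vertex. Then $x,y \in (a,b]$ since they are distinct from $a$ and lie in $[a,b]$. The opposite vertex $z = xyb$ wait — more carefully, by Lemma \ref{Bromberg} we have $\hull(\square) = [a,z] \cong [a,x] \times [a,y]$, and $z$ is the median $xy$-combination that is ``farthest'' from $a$; in particular the point $m = axy$ (the median of $a,x,y$) equals $a$ since $a,x,y$ are three vertices of the square and $a$ is opposite to $z$, so $axy = a$ by the square relations $a_{i-1}a_ia_{i+1}=a_i$. Hence $a \in [x,y]$ while $x,y \in (a,b]$, so $(a,b]$ fails to be convex (it contains $x$ and $y$ but not the point $a$ of $[x,y]$).

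For the converse, suppose $(a,b]$ is not convex. Then there exist $x,y \in (a,b]$ and a point $w \in [x,y]$ with $w = a$ (this is the only way convexity can fail, since $[x,y] \subset [a,b]$ by Lemma 1.1, so any point of $[x,y] \setminus (a,b]$ must be $a$ itself). Thus $a \in [x,y]$, i.e.\ $xya = a$, with $x,y \ne a$ and $x,y \in [a,b]$. Now I want to produce a genuine (non-degenerate) median square with vertex $a$ inside $[a,b]$. The natural candidate is $\square(a,x,z,y)$ where $z = xyb$; one checks using the distributive/long-distributive laws that $a,x,z,y$ satisfy the cyclic median relations (this is essentially the computation that $x,y \in [a,z]$ and that $a = xya$, combined with symmetry). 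The square relations to verify are $yax = a$ (given), $axz \stackrel{?}{=} x$, $xzy \stackrel{?}{=} z$, and $zya \stackrel{?}{=} y$; each reduces to an application of the distributive law after substituting $z = xyb$ and using $abx = x$, $aby = y$, $axy = a$. Finally, to ensure the square is non-degenerate I invoke Observation \ref{DegenerateSquare}: since $x \ne a$ and $y \ne a$, if the square were degenerate the only possibility is $x = z$ and $y = a$ (or a cyclic variant), but $y \ne a$ rules this out; alternatively one notes that three of the four points $a, x, y$ are distinct provided $x \ne y$. The case $x = y$ cannot occur: if $x = y$ then $[x,y] = \{x\}$ by the majority axiom, contradicting $a \in [x,y]$ and $a \ne x$. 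So the square is non-degenerate.

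The main obstacle I anticipate is the bookkeeping in the converse: extracting from ``$(a,b]$ is not convex'' exactly the clean statement that $a \in [x,y]$ for some $x,y \in (a,b]$ (which requires knowing $[x,y] \subset [a,b]$ and that $a$ is the unique possible offending point), and then verifying that $z = xyb$ together with $a,x,y$ genuinely forms a square rather than something degenerate. The degeneracy analysis via Observation \ref{DegenerateSquare} should handle the latter cleanly, and the former is really just Lemma 1.1 plus the definition of $(a,b]$. The four cyclic median identities are all one-line distributive-law computations of the same flavor as those already appearing in the excerpt (e.g.\ in Lemmas \ref{Bromberg} and \ref{DoubleProjection}), so no new technique is needed there.
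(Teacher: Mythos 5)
Your proposal is correct and follows essentially the same route as the paper: the easy direction uses that the diagonal interval $[x,y]$ of a square at $a$ contains $a$ while $x,y\in(a,b]$, and the converse constructs the fourth vertex $z=xyb$ (the paper's $e=bcd$), verifies the cyclic relations by the distributive law exactly as you indicate (e.g.\ $xz y = xy(xyb)=xyb=z$ and $axz=(axx)(axy)b=xab=x$), and rules out degeneracy via Observation \ref{DegenerateSquare} using $x\neq a$, $y\neq a$, $x\neq y$. No substantive difference from the paper's proof.
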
   
   
   \begin{proof}

   \msb{changed order of points to around square to match next paragraph}
   First, note that if $[a,b]$ contains a median square $\sigma= \square (a,c,e,d)$. Then $\sigma=[c,d]$ contains $a$ and thus $\sigma\not\subset (a,b]$. Since $c,d\in (a,b]$ it follows that $(a,b]$ is not convex. 
      
   For the other direction, assume that $(a,b]$ is not convex. Then there exist points $c,d\in (a,b]$ such that $[c,d]\not\in(a,b]$. Since $[c,d]\subset[a,b]$, it follows that $a\in [c,d]$. Thus we have $cda=a$. We define $e=bcd$. We claim 
   that $\{a,c,e,d\}$ form a median square. We already have $cda=a$, so we need to show 
   \begin{itemize}
   \item \underline{$cde=e$:} $cde=cd(bcd)=bcd=e$
   \item \underline{$eca=c$:} $eca=(bcd)ca=bc(dca)=bca=c$
   \item \underline{$eda=d$:} same as $eca=c$, by symmetry
   \end{itemize}
   
   Now we need to show that $a,b,c,d$ are distinct. By our choice of $c$ and $d$ we immediately have that $c\not=a, d\not=a$ and $c\not =d$. Thus, by Observation \ref{DegenerateSquare} we have that $\square (a,c,d,e)$ is the required median square.
   \end{proof}

    \section{ER's, Z-sets, homology manifolds}
    \label{Fancy}

In this section we will address some topological notions which we will employ in this paper. 
We let $X$ denote a separable, metrizable, locally compact topological space. 
In addition, we will assume that the
covering dimension $\dim X<\infty$. It is a standard fact that such
$X$ embeds as a closed subset of some $\R^N$ and it has an exhaustion
by compact subsets (see for example \cite{Munkres75}, Theorem 50.5 applied to the 1-point compactification of X.)

\subsection{ER's and ENR's}
We recall the following definitions.

\begin{definition}
  $X$ is an ENR (Euclidean Neighborhood Retract) if for some (every)
  embedding $X\subset\R^N$ as a closed set there is a neighborhood of
  $X$ that retracts to $X$. In addition, $X$ is an ER (Euclidean
  Retract) if all of $\R^N$ retracts to $X$.
\end{definition}

The following facts are standard:
\begin{itemize}
  \item $X$ is an ENR if and only if it is locally contractible \cite {Liao49}
    \item Every ENR is homotopy equivalent to a CW complex and in
      particular Whitehead's theorem holds for ENRs (see e.g. \cite{milnor}).
\item
  $X$ is an ER if and only if it is a contractible ENR.
  \end{itemize}

\subsection{Z-sets}
\begin{definition}
  Let $X$ be an ENR. A closed subset $Z\subset X$ is a {\it Z-set} in
  $X$ if for every open set $U\subset X$ inclusion $U\smallsetminus
  Z\hookrightarrow U$ is a homotopy equivalence.
\end{definition}

Equivalently, there is a homotopy $f_t:X\to X$ for $t\in [0,1]$ such
that $f_0=id$ and $f_t(X)\cap Z=\emptyset$ for $t>0$. We will use the
following form of this statement.

\begin{lemma} [{\cite[Theorem 2.3]{torunczyk}}]\label{perturb}
  Let $X$ be an ENR and $Z\subset X$ a Z-set. Let $K$ be a finite simplicial
  complex and $L\subset K$ a subcomplex. Fix a metric on $X$ and let
  $f:(K,L)\to (X,X\smallsetminus Z)$ be a map. Then for every
  $\epsilon>0$ there is a homotopy $f_t:K\to X$ such that $f_0=f$, $f_t=f$ on
  $L$, $f_1(K)\subset X\smallsetminus Z$ and $d(f(x),f_t(x))<\epsilon$
  for every $x\in X,t\in [0,1]$. 
\end{lemma}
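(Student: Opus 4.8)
The plan is to leave $f$ \emph{completely unchanged} on $L$ and to push the rest of $K$ off $Z$ with an instant push-off homotopy of $Z$, where the ``speed'' of the push at a point $x\in K$ is governed by a function that vanishes exactly on $L$. Recall the reformulation of the $Z$-set condition recorded just before the statement: there is a homotopy $F_s\colon X\to X$ with $F_0=\operatorname{id}$ and $F_s(X)\cap Z=\emptyset$ for all $s>0$. The one substantive input I would need is the $\epsilon$-controlled version of this: for every $\epsilon>0$ there is a homotopy $H\colon X\times[0,1]\to X$ with $H_0=\operatorname{id}$, $H_s(X)\cap Z=\emptyset$ for all $s>0$, and $d(x,H_s(x))<\epsilon$ for all $x\in X$ and $s\in[0,1]$ (this is the content of the cited result \cite[Theorem 2.3]{torunczyk}; see the last paragraph). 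Granting it, choose a continuous function $\tau\colon K\to[0,1]$ with $\tau^{-1}(0)=L$ — such a $\tau$ exists because $L$ is closed in the metrizable space $K$, e.g.\ $\tau(x)=\min\{1,d_K(x,L)\}$ for a metric $d_K$ on $K$ — and set
\[
  f_t(x)=H_{\,t\,\tau(x)}\bigl(f(x)\bigr),\qquad x\in K,\ t\in[0,1].
\]

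The required properties are then immediate. The assignment $(x,t)\mapsto f_t(x)$ is continuous, being a composition of continuous maps. Taking $t=0$ gives $f_0=H_0\circ f=f$. For $x\in L$ we have $\tau(x)=0$, hence $f_t(x)=H_0(f(x))=f(x)$ for all $t$, i.e.\ $f_t=f$ on $L$. The distance estimate is $d(f(x),f_t(x))=d\bigl(f(x),H_{t\tau(x)}(f(x))\bigr)<\epsilon$ by the choice of $H$. Finally $f_1(K)\subset X\smallsetminus Z$: if $x\notin L$ then $\tau(x)>0$, so $f_1(x)=H_{\tau(x)}(f(x))\in H_{\tau(x)}(X)\subset X\smallsetminus Z$; and if $x\in L$ then $f_1(x)=f(x)\in X\smallsetminus Z$ since $f(L)\subset X\smallsetminus Z$ by hypothesis.

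So everything hinges on producing the small instant push-off $H$, and that is the step I expect to be the main obstacle: the bare definition of a $Z$-set supplies a push-off homotopy with no control on the size of its tracks. The standard route, which I would follow, is to localize. Take a locally finite open cover $\{U_j\}$ of $X$ with $\diam U_j<\epsilon/2$; since open subsets of ENRs are ENRs and the $Z$-set condition is by definition inherited by open subsets, each $Z\cap U_j$ is a $Z$-set in $U_j$, so it can be pushed off by a homotopy supported near $\overline{U_j}$ — here one uses that $X$ is a finite-dimensional ANR, both to damp the push-off back down to the identity near $\partial U_j$ and to keep the resulting map off $Z$ while doing so. One then amalgamates these locally finitely overlapping partial homotopies, composed in a controlled order, into a single homotopy $H$ each of whose tracks lies in some $U_j$ (hence has diameter $<\epsilon$) and which still instantly clears $Z$. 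This amalgamation is precisely the equivalence between the ``homotopy negligibility'' formulation of a $Z$-set and its ``$\mathcal U$-small push-off'' formulation, and it is exactly \cite[Theorem 2.3]{torunczyk}, so in the write-up one simply quotes it rather than redoing the argument.
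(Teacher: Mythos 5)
Your construction is correct, and it is worth noting that the paper gives no proof of this lemma at all: it is quoted directly from Toru\'nczyk, so there is no internal argument to match against. Your route reduces the relative, $\epsilon$-controlled statement to the absolute $\epsilon$-controlled \emph{instant} push-off via $f_t(x)=H_{t\tau(x)}(f(x))$ with $\tau^{-1}(0)=L$, and all the verifications go through: continuity, $f_0=f$, $f_t=f$ on $L$, the $\epsilon$-bound, and $f_1(K)\subset X\smallsetminus Z$ (the last using both that $H_s(X)\cap Z=\emptyset$ for $s>0$ and that $f(L)\subset X\smallsetminus Z$ with $Z$ closed). What this buys is that the only black box you need is the controlled push-off of the identity, a formally weaker-looking input than the full relative statement; the cost is that this black box is itself the real content of the cited theorem, and your localization/amalgamation sketch of it is only a sketch --- damping each local push-off near $\partial U_j$ while keeping the image off $Z$, and composing the locally finite family in a controlled order, are exactly the places where the ANR machinery does genuine work. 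Since you, like the paper, ultimately quote the citation for that step, your write-up is complete at the same level of rigor as the paper's; the one point to keep explicit is that you need the push-off to be instant ($H_s(X)\cap Z=\emptyset$ for \emph{all} $s>0$, as in the reformulation stated just before the lemma), since $H_1(X)\cap Z=\emptyset$ alone would not force $f_1(x)=H_{\tau(x)}(f(x))$ to miss $Z$ when $\tau(x)<1$.
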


The following proposition is well-known. When $X$ is compact it is
proved for example in \cite[Theorem B.1]{BestvinaHorbez19}. The
noncompact case is a straightforward generalization.

\begin{prop}
  Let $X$ be an ENR  and $Z\subset X$ a closed
  subset. Suppose that for every $z\in Z$ and every open set $V\subset
  X$ with $z\in V$ there exists an open set $W\subset V$ with $z\in W$
  such that inclusion $W\smallsetminus Z\hookrightarrow
  V\smallsetminus Z$ is nullhomotopic. Then
  $Z$ is a Z-set in $X$.
  \label{LocalConditionZset}
\end{prop}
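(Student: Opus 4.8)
The plan is to reduce the global $Z$-set property to the hypothesized local nullhomotopy condition by a patching argument over a compact exhaustion. Recall the characterization of $Z$-sets: $Z\subset X$ is a $Z$-set if and only if for every open $U\subset X$ the inclusion $U\smallsetminus Z\hookrightarrow U$ is a homotopy equivalence. Since $X$ is an ENR, hence locally contractible and homotopy equivalent to a CW complex, Whitehead's theorem applies, so it suffices to show that this inclusion induces isomorphisms on all homotopy groups (with all basepoints in $U\smallsetminus Z$). Equivalently, I would show that every map $(D^k, S^{k-1})\to (U, U\smallsetminus Z)$ can be homotoped rel $S^{k-1}$ into $U\smallsetminus Z$, i.e.\ that the relative homotopy groups $\pi_k(U, U\smallsetminus Z)$ vanish.

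First I would set up the local data: for each $z\in Z$ and each neighborhood $V$, the hypothesis gives $W=W(z,V)$ with $z\in W\subset V$ and $W\smallsetminus Z\hookrightarrow V\smallsetminus Z$ nullhomotopic. The key step is to upgrade this to a statement about arbitrary open $U$. Given a map $f:(D^k,S^{k-1})\to(U,U\smallsetminus Z)$, I would triangulate $D^k$ finely enough — using compactness of $D^k$, a Lebesgue number argument, and the local data $\{W(z,V)\}$ together with the local contractibility of $X$ — so that each simplex of the triangulation maps into a small open set that is either contained in $U\smallsetminus Z$ or is one of the sets $V$ on which the corresponding $W\smallsetminus Z\hookrightarrow V\smallsetminus Z$ is nullhomotopic and $V$ is contractible in $U$. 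Then I would push $f$ off $Z$ skeleton by skeleton: on the $0$-skeleton move vertices slightly off $Z$ (possible since $V\smallsetminus Z$ is nonempty — indeed dense, as $Z$ has empty interior, which follows from the hypothesis applied with $V=W$); inductively, having moved the $(i-1)$-skeleton off $Z$, extend over each $i$-simplex $\sigma$: the attaching map $\partial\sigma\to V\smallsetminus Z$ is nullhomotopic in $V\smallsetminus Z$ (if $i=1$, because $V\smallsetminus Z$ is path connected — again from density of the complement and local connectivity; for $i\ge 2$ using the nullhomotopy $W\smallsetminus Z\hookrightarrow V\smallsetminus Z$ after first contracting into $W$), so it bounds a disk in $V\smallsetminus Z\subset U\smallsetminus Z$, which we use to redefine $f$ on $\sigma$ — this is a homotopy rel the already-corrected part because $V$ is contractible in $U$. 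Carrying this through all skeleta of $D^k$, and doing the analogous (one-dimension-lower, boundary-respecting) construction so the homotopy is rel $S^{k-1}$, yields the desired null-homotopy, proving $\pi_k(U,U\smallsetminus Z)=0$ for all $k$, hence $U\smallsetminus Z\hookrightarrow U$ is a weak equivalence and therefore a homotopy equivalence.

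I would then address the noncompactness, as the statement flags ``the noncompact case is a straightforward generalization'' of the compact result in \cite[Theorem B.1]{BestvinaHorbez19}. Since $X$ is separable, metrizable, locally compact and finite-dimensional, it has an exhaustion $X=\bigcup_n X_n$ by compacta; any map from a compact polyhedron into $X$ (or $U$) has image in some $X_n$, so all the skeleton-by-skeleton arguments above take place inside a compact piece and the compact techniques apply verbatim. In particular no issue of ``uniformity at infinity'' arises because every individual disk, and every individual homotopy of a disk, is compact. Alternatively one can invoke the equivalent formulation via a global homotopy $f_t:X\to X$ pushing off $Z$: build it on each $X_n$ compatibly and patch, but the homotopy-group formulation above already suffices together with Whitehead.

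\textbf{Main obstacle.} The technical heart is the skeletal induction: choosing the triangulation of $D^k$ fine enough that every simplex sees a single ``good'' open set, and arranging that the extension over each simplex is not merely possible but can be performed by a homotopy that does not disturb the lower skeleton where $f$ has already been corrected — this requires the auxiliary contractibility of the $V$'s inside $U$ (available from local contractibility of the ENR $X$, after shrinking) and careful bookkeeping of the ``rel $S^{k-1}$'' condition, exactly as in the compact proof in \cite{BestvinaHorbez19}; the novelty here is only the mild point-set care needed to confine everything to a compact exhaustion level, which is indeed routine.
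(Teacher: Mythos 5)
Your proposal is correct in outline and takes essentially the route the paper relies on: the paper does not prove Proposition \ref{LocalConditionZset} itself but cites \cite[Theorem B.1]{BestvinaHorbez19} for the compact case and declares the noncompact case a straightforward generalization, and your argument---reduce to showing $\pi_k(U,U\smallsetminus Z)=0$, push maps of $(D^k,S^{k-1})$ off $Z$ by a controlled skeletal induction using nested neighborhoods supplied by iterating the hypothesis together with local contractibility of the ENR, then conclude via Whitehead since $U$ and $U\smallsetminus Z$ are ENRs---is precisely the standard proof underlying that citation, with the genuinely delicate point (choosing neighborhood chains so each corrected boundary sphere lands in a single small set and the extensions do not disturb lower skeleta, rel $S^{k-1}$) correctly identified rather than fully carried out. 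Your handling of noncompactness, namely that every disk and every homotopy of a disk has compact image so the compact-case techniques apply verbatim and no uniformity at infinity is needed, is exactly the ``straightforward generalization'' the paper has in mind.
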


\subsection{Homology manifolds}
We use singular homology with coefficients in $\Z$.

\begin{definition}
  Let $X$ be an ENR. We say that $X$ is a {\it homology $n$-manifold}
  if for every $x\in X$ we have $H_i(X,X\smallsetminus\{x\})=0$ for
  $i\neq n$ and $H_n(X,X\smallsetminus\{x\})\cong\Z$. We say that $X$
  is a {\it homology $n$-manifold with boundary} if for every $x\in X$
  we have $H_i(X,X\smallsetminus\{x\})=0$ for
  $i\neq n$ and $H_n(X,X\smallsetminus\{x\})=0$ or $\cong\Z$. The set
  $\partial X$
  of points $x\in X$ where $H_n(X,X\smallsetminus\{x\})=0$ is the {\it
    boundary} of $X$.
\end{definition}

We will use the following theorem of Mitchell \cite{Mitchell90}:

\begin{thm}\label{mitchell}
  If $X$ is a homology $n$-manifold with boundary then $\partial X$
  is a homology $(n-1)$-manifold (with empty boundary).
\end{thm}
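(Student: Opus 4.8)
The plan is to reduce the statement to a purely local homology computation at points of $\partial X$, exploiting the two local conditions defining $X$ (interior points have the local homology of $\R^n$, boundary points have trivial local homology) together with Poincar\'e--Lefschetz duality for homology manifolds with boundary.

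As preliminary reductions, I would first record that $\partial X$ is closed in $X$ — equivalently, that the interior $X_0 := X\smallsetminus\partial X$ is open — and that $X_0$ is itself a homology $n$-manifold without boundary. This is exactly where Bredon's local constancy of the orientation sheaf (Section \ref{Fancy}) enters: that sheaf has stalk $\Z$ at each interior point and is locally constant near such points, so a whole neighborhood of an interior point consists of interior points. Second, I would show that $\partial X$ is an ENR; the natural route is to verify that $\partial X$ is a Z-set in $X$ via Proposition \ref{LocalConditionZset} — the vanishing $H_*(X,X\smallsetminus\{x\})=0$ at boundary points, combined with local contractibility of the ENR $X$, supplies the required local nullhomotopies — and then to extract from the Z-set structure and the local homology data a neighborhood of $\partial X$ in $X$ that retracts onto it, so that $\partial X$ is a retract of an open subset of an ENR, hence an ENR. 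Third, by excision everything is local: for any $x$, the group $H_*(X,X\smallsetminus\{x\})$ depends only on an arbitrarily small neighborhood of $x$ in $X$, and $H_*(\partial X,\partial X\smallsetminus\{x\})$ depends only on an arbitrarily small neighborhood of $x$ in $\partial X$.

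Now fix $x\in\partial X$ and a small contractible neighborhood $U$ of $x$ in $X$; set $\partial U := U\cap\partial X$ and $U_0 := U\smallsetminus\partial X$. A natural opening move is the long exact sequence of the triple $(U,\,U\smallsetminus\{x\},\,U_0)$, legitimate since $U_0\subset U\smallsetminus\{x\}$:
\[
\cdots\to H_k(U\smallsetminus\{x\},U_0)\to H_k(U,U_0)\to H_k(U,U\smallsetminus\{x\})\to H_{k-1}(U\smallsetminus\{x\},U_0)\to\cdots.
\]
Since $x\in\partial X$ we have $H_k(U,U\smallsetminus\{x\})\cong H_k(X,X\smallsetminus\{x\})=0$ for all $k$, so the sequence collapses to isomorphisms $H_k(U\smallsetminus\{x\},U_0)\xrightarrow{\ \cong\ }H_k(U,U_0)$; also, from the long exact sequence of $(U,U\smallsetminus\{x\})$, the punctured neighborhood $U\smallsetminus\{x\}$ is acyclic. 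The two relative groups above are the local homology of $U\smallsetminus\{x\}$ along $\partial X\smallsetminus\{x\}$ and the local homology of $U$ along $\partial X$. The crux is to identify these groups — using that $U_0$ is an open homology $n$-manifold, that $U\smallsetminus\{x\}$ is acyclic, and generalized Poincar\'e--Lefschetz duality for the homology $n$-manifold-with-boundary $X$ — with \v{C}ech-cohomological invariants of $\partial U$ and $\partial U\smallsetminus\{x\}$. Carrying this through shows that the local homology of $\partial X$ at $x$ is $\Z$ in degree $n-1$ and $0$ otherwise, for every $x\in\partial X$; in particular it is never $0$ in degree $n-1$, so $\partial X$ has empty boundary. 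Together with the ENR property, $\partial X$ is a homology $(n-1)$-manifold without boundary.

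The main obstacle is the duality step. Singular homology alone does not see the passage from the local homology of $X$ near $\partial X$ to cohomological invariants of $\partial X$ itself; one must work with the local (co)homology sheaves of the ENR $X$, pass to \v{C}ech cohomology and Borel--Moore homology, and invoke Bredon's analysis of (co)homology manifolds with boundary and Poincar\'e--Lefschetz duality in that generality — and the ENR-ness of $\partial X$ requires comparable care. Assembling all of this is precisely the content of Mitchell's paper \cite{Mitchell90}, so we simply invoke it.
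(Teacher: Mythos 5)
The paper gives no proof of this statement: it is quoted as Mitchell's theorem and justified solely by the citation \cite{Mitchell90}, which is exactly how your proposal concludes. So your treatment is essentially the same as the paper's; the surrounding sketch (including the Z-set step, which as phrased would need more than homological vanishing to produce genuine nullhomotopies) is scaffolding you never actually rely on, since you defer the substance to Mitchell.
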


Homology manifolds in dimension $\leq 2$ are manifolds (Wilder
\cite{wilder}), but in higher dimensions this is not true. Typical
examples are the one-point compactification of the Whitehead manifold
in dimension 3, or the suspension of a homology sphere in dimensions
$>3$. We will also need the following theorems of Bredon.

\begin{thm} [Invariance of Domain \cite{BredonSheaf}]
  Let $X$ be a homology $n$-manifold and $A\subset X$ a subset which
  is also a homology $n$-manifold. Then $A$ is an open subset of $X$.
  \label{InvarianceOfDomain}
\end{thm}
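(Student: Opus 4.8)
The plan is to reduce the statement to a local one and then extract it from the top-degree part of the Borel--Moore (locally finite) homology long exact sequence; the only nontrivial ingredients are standard structural facts about homology manifolds proved by Bredon \cite{BredonSheaf}. Throughout I would work with $\mathbb{Z}/2$-coefficients, so that every homology manifold is automatically $\mathbb{Z}/2$-orientable and its orientation sheaf is the constant sheaf $\mathbb{Z}/2$ (one could instead carry $\mathbb{Z}$-orientations).

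It suffices to show that each $a\in A$ has an open neighborhood in $X$ contained in $A$. Fix $a$. Since $A$ and $X$ are ENRs they are locally compact and locally connected; in particular $A$ is locally closed in $X$, so I can choose a connected open neighborhood $U$ of $a$ in $X$ with $A\cap U$ closed in $U$. Let $A_0$ be the connected component of $a$ in $A\cap U$. As $A\cap U$ is locally connected, $A_0$ is clopen in $A\cap U$ and hence closed in $U$. Now $U$ is a connected homology $n$-manifold and $A_0$ is a connected homology $n$-manifold (both are open subsets of homology $n$-manifolds, namely $X$ and $A$), each with empty boundary, and by excision together with the homology-manifold axioms the local homology at every point of either is $\mathbb{Z}/2$ concentrated in degree $n$.

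Set $B=U\smallsetminus A_0$, an open subset of $U$; the goal is to show $B=\emptyset$, since then $U=A_0\subseteq A$ is the desired neighborhood. Consider the long exact sequence relating Borel--Moore homology of $U$, of the closed subset $A_0$, and of the open complement $B$, which in top degree reads $H^{BM}_{n+1}(B)\to H^{BM}_n(A_0)\xrightarrow{\,i_*\,}H^{BM}_n(U)\xrightarrow{\,j^*\,}H^{BM}_n(B)$, where $j^*$ is restriction to $B$. By Poincar\'e--Lefschetz duality for homology manifolds and local constancy of the orientation sheaf: $H^{BM}_{n+1}$ vanishes on any homology $n$-manifold; $H^{BM}_n(A_0)\cong\mathbb{Z}/2$, generated by the fundamental class $[A_0]$ (here $A_0$ connected, and possibly noncompact, which is exactly why ordinary homology is replaced by Borel--Moore homology); $H^{BM}_n(U)\cong\mathbb{Z}/2$ with fundamental class $[U]$; and, if $B\neq\emptyset$, the restriction $j^*$ carries $[U]$ to the fundamental class of $B$ on each of its components, which is nonzero. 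Hence, if $B\neq\emptyset$, then $j^*$ is injective, forcing $i_*=0$; but $H^{BM}_{n+1}(B)=0$ forces $i_*$ to be injective, so $H^{BM}_n(A_0)=0$, contradicting $H^{BM}_n(A_0)\cong\mathbb{Z}/2$. Therefore $B=\emptyset$, and $U\subseteq A$.

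The diagram-chase itself is trivial; the content is entirely in the quoted structural facts, namely that the top Borel--Moore homology of a homology $n$-manifold behaves exactly as for a topological manifold (rank one on each component, vanishing above degree $n$, and restricting compatibly to open subsets). These are consequences of the Poincar\'e--Lefschetz duality theorem and the local constancy of the orientation sheaf of \cite{BredonSheaf}, and establishing them from scratch --- which requires Bredon's sheaf-theoretic development of Borel--Moore homology on homology manifolds --- would be the main obstacle; for that reason I would simply quote the theorem as stated.
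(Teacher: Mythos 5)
The paper does not prove this statement at all --- it is quoted from Bredon's sheaf theory book --- so there is no internal proof to match; what you have written is a proof sketch that is correct modulo the structural facts you cite, and those facts are exactly the content of the reference the paper points to. Your point-set reductions are sound: $A$, being an ENR, is locally compact and hence locally closed in $X$, so a connected open $U\ni a$ with $A\cap U$ closed in $U$ exists, and local connectedness makes the component $A_0$ of $a$ both open in $A$ (hence a homology $n$-manifold) and closed in $U$. The diagram chase in the Borel--Moore sequence $0=H^{BM}_{n+1}(B)\to H^{BM}_n(A_0)\to H^{BM}_n(U)\to H^{BM}_n(B)$ is then correct: injectivity of $i_*$ from the vanishing term, triviality of $i_*$ from $j^*[U]\neq 0$ when $B\neq\emptyset$, contradicting $H^{BM}_n(A_0)\cong\mathbb{Z}/2$. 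The inputs you quote --- vanishing of $H^{BM}_k$ for $k>n$, $H^{BM}_n\cong\mathbb{Z}/2$ on connected pieces, and compatibility of fundamental classes with restriction to open subsets --- are theorems in Bredon's development (over $\mathbb{Z}/2$ the orientation sheaf is constant, and the paper's $\mathbb{Z}$-coefficient local condition gives the $\mathbb{Z}/2$ one by universal coefficients, as you implicitly use). The one step worth making explicit is that the nonvanishing of $j^*[U]$ rests on the naturality of the isomorphism $H^{BM}_n(\,\cdot\,;\mathbb{Z}/2)\cong \Gamma(\text{orientation sheaf})$ under open restriction; since that, too, is part of the quoted machinery, your argument is a legitimate derivation from the cited source rather than an independent proof, which is an entirely reasonable way to treat a result the paper itself only cites.
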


\begin{thm} [Local Orientability \cite{Bredon69}]
\label{bredon}
  Let $X$ be a homology $n$-manifold. Then the local homology sheaf
  $\{U\mapsto H_n(X,X\setminus U)\}$ is locally constant. More explicitly,
  if $c$ is a relative $n$-cycle representing $k\in\Z$ times a generator of
  $H_n(X,X\smallsetminus x)$ then there is a neighborhood $U$ of $x$
  such that for all $y\in U$, $c$ represents $k$ times a generator of
  $H_n(X,X\smallsetminus y)$.

\end{thm}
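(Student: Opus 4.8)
Lemma \ref{bredon} (Local Orientability) — proof proposal.

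The plan is to deduce the sheaf statement from a single relative-cycle argument, using only that the local homology groups are all $\cong \Z$ (or $0$) and excision. First I would set up the key observation: if $c$ is a relative $n$-cycle in $(X, X\smallsetminus x)$, i.e. an actual singular $n$-chain in $X$ whose boundary $\partial c$ is supported in $X\smallsetminus x$, then the support $|c|$ of $c$ is a compact subset of $X$, and the support $|\partial c|$ of its boundary is a compact subset of $X\smallsetminus x$. Since $X$ is metrizable and $|\partial c|$ is compact, there is an open neighborhood $V$ of $x$ with $\overline V$ compact and $\overline V \cap |\partial c| = \emptyset$. Then $c$ defines, for \emph{every} $y \in V$, a relative cycle in $(X, X\smallsetminus y)$, because $\partial c$ still avoids $y$; so we get a well-defined class $[c]_y \in H_n(X, X\smallsetminus y)$ for all $y \in V$, given by the same chain $c$. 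The content of the lemma is that the integer $[c]_y$ represents (relative to a coherent choice of generators) is constant on a neighborhood of $x$.

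Next I would make the comparison between different basepoints precise via a third open set. Fix $y_0 = x$. For $y \in V$, I want to compare $[c]_{y}$ with $[c]_{x}$. Choose a small open set $W$ with $x, y \in W \subset \overline W \subset V$ and $\overline W \cap |\partial c| = \emptyset$; shrinking, take $W$ to be (homeomorphic to) a small contractible–enough neighborhood — but in the homology-manifold setting we cannot assume $W$ is a ball, so instead I would argue abstractly. Consider the maps of pairs
\[
(X, X\smallsetminus W) \longrightarrow (X, X\smallsetminus x), \qquad (X, X\smallsetminus W) \longrightarrow (X, X\smallsetminus y),
\]
inducing
\[
H_n(X, X\smallsetminus W) \longrightarrow H_n(X, X\smallsetminus x), \qquad H_n(X, X\smallsetminus W) \longrightarrow H_n(X, X\smallsetminus y).
\]
The chain $c$ (whose boundary avoids $\overline W$, hence avoids $X\setminus W$ having... ) — more carefully: $c$ is a relative cycle in $(X, X\smallsetminus W)$ precisely when $|\partial c| \cap W = \emptyset$, i.e. $|\partial c| \subset X\smallsetminus W$, which holds since $|\partial c| \cap \overline W = \emptyset$. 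So $c$ determines a class $[c]_W \in H_n(X, X\smallsetminus W)$ mapping to $[c]_x$ under the first map and to $[c]_y$ under the second. Thus it suffices to show that for $y$ in a neighborhood of $x$ \emph{both} maps $H_n(X, X\smallsetminus W) \to H_n(X,X\smallsetminus x)$ and $\to H_n(X,X\smallsetminus y)$ are isomorphisms (and we fix compatible generators), so that $[c]_x$ and $[c]_y$ are the same integer multiple.

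The main obstacle — and the heart of Bredon's theorem — is exactly showing that $H_n(X, X\smallsetminus W) \to H_n(X, X\smallsetminus x)$ is an isomorphism for sufficiently small $W\ni x$. This is a \emph{continuity/tautness} statement for Čech-type local homology on ENRs: $H_n(X, X\smallsetminus x) = \varinjlim_{W\ni x} H_n(X, X\smallsetminus W)$, and one needs the pro-group $\{H_n(X, X\smallsetminus W)\}$ to be pro-constant $\cong \Z$ near $x$. I would obtain this from the ENR hypothesis: $X$ is locally contractible and finite-dimensional, so small neighborhoods of $x$ have well-controlled homology; using excision $H_n(X, X\smallsetminus W) \cong H_n(U, U\smallsetminus W)$ for any open $U \supset \overline W$, combined with the long exact sequence of $(U, U\smallsetminus x)$ versus $(U, U\smallsetminus W)$ and local contractibility to kill the comparison terms $H_*(U\smallsetminus x) \to H_*(U\smallsetminus W)$ in the relevant range. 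Concretely, since the statement is exactly Bredon's \cite{Bredon69}, I would cite it for the pro-constancy and present the above only as the reduction showing why local constancy of the sheaf follows: once the isomorphism $H_n(X, X\smallsetminus W)\xrightarrow{\sim} H_n(X, X\smallsetminus x)$ is known for a cofinal system of $W$, fix one such $W$ and a generator $g_W$ of $H_n(X, X\smallsetminus W)$; its image in $H_n(X,X\smallsetminus y)$ is a generator for every $y\in W$ (again by the isomorphism applied at $y$, after shrinking), giving the coherent choice of generators; and since $[c]_W$ is a fixed class, its integer coordinate $k$ with respect to $g_W$ is computed the same way at $x$ and at every $y\in W$. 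That is the desired neighborhood $U := W$.
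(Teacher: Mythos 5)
The paper does not prove this statement at all: it is imported verbatim as a theorem of Bredon \cite{Bredon69}, so there is no internal proof to compare against, and your proposal is consistent with that, since you too defer the genuinely hard step --- the tautness/pro-constancy claim that $H_n(X,X\smallsetminus W)\to H_n(X,X\smallsetminus x)$ is an isomorphism for a cofinal system of neighborhoods $W\ni x$, compatibly at nearby points --- to Bredon. What you add is correct but routine: the compactness-of-supports observation that a relative cycle for $(X,X\smallsetminus x)$ is simultaneously a relative cycle for $(X,X\smallsetminus W)$ once $\overline W$ misses $|\partial c|$, and the comparison of $[c]_x$ and $[c]_y$ through the common class $[c]_W$, which cleanly explains why the explicit cycle-level statement follows from the sheaf-level isomorphisms. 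Two caveats: the assertion that the image of a generator $g_W$ in $H_n(X,X\smallsetminus y)$ is a generator for every $y\in W$ is precisely the local constancy being proved, so as a stand-alone argument this would be circular --- it is legitimate only because you cite Bredon for it; and your sketched route to the isomorphism via excision, the long exact sequence, and local contractibility is not sufficient as stated, since controlling the comparison terms $H_*(U\smallsetminus x)\to H_*(U\smallsetminus W)$ is exactly where finite covering dimension and the local homological conditions enter through the sheaf-theoretic machinery of Bredon's proof. With the citation carrying that weight, your write-up matches the paper's treatment in substance.
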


A homology $n$-manifold $X$ is {\it orientable} if there is a choice
of a generator $u_x\in H_n(X,X\smallsetminus x)$ for all $x\in X$
which is locally constant: every $x$ admits a relative cycle
representing $u_x$ so that the same cycle represents $u_y$ for $y$ in
a neighborhood of $x$.

\begin{cor}
  Every homology $n$-manifold is locally orientable, i.e. every point
  has an orientable neighborhood.
\end{cor}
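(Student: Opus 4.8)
The plan is to deduce this corollary directly from Bredon's Local Orientability theorem (Theorem~\ref{bredon}). Fix a point $x\in X$. Since $X$ is a homology $n$-manifold, $H_n(X,X\smallsetminus x)\cong\Z$; choose a generator $u_x$, and pick a relative $n$-cycle $c$ representing $u_x$, so that $c$ represents $k=1$ times a generator at $x$. By Theorem~\ref{bredon}, there is a neighborhood $U$ of $x$ such that for every $y\in U$ the same cycle $c$ represents $1$ times a generator of $H_n(X,X\smallsetminus y)$. In particular, for each $y\in U$, the image of the class of $c$ under the restriction map $H_n(X,X\smallsetminus x)\to H_n(X,X\smallsetminus y)$ — or more precisely, the class in $H_n(X,X\smallsetminus y)$ represented by $c$ — is a generator. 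Define $u_y$ to be this class for all $y\in U$; this is a consistent choice of generators on $U$ witnessed by the single cycle $c$, so $U$ is an orientable neighborhood of $x$ in the sense defined just above the corollary.

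The only point requiring a little care is matching the definition of orientability as stated — ``every $x$ admits a relative cycle representing $u_x$ so that the same cycle represents $u_y$ for $y$ in a neighborhood of $x$'' — with what Theorem~\ref{bredon} gives. The theorem's conclusion is literally that statement restricted to the chosen $x$ and its generator, so there is essentially nothing to do beyond unwinding definitions: the neighborhood $U$ furnished by the theorem, equipped with the generators $u_y$ read off from $c$, is by definition an orientable homology $n$-manifold (it is an open subset of $X$, hence itself a homology $n$-manifold by Invariance of Domain, Theorem~\ref{InvarianceOfDomain}, if one wants to emphasize that $U$ is again a homology manifold). Thus every point of $X$ has an orientable neighborhood.

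I do not anticipate a genuine obstacle here; the statement is a formal corollary and the ``hard part'' — the local constancy of the orientation sheaf — is exactly the content of Theorem~\ref{bredon}, which we are permitted to assume. The only thing to be mildly careful about is bookkeeping: ensuring that the generator chosen at $x$ and the cycle $c$ representing it are fixed \emph{first}, and that $U$ is then extracted from the theorem, rather than trying to produce a global coherent orientation (which is false in general and not what is being claimed).
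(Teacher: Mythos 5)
Your proof is correct and is exactly the argument the paper intends: the paper states this corollary without proof as an immediate consequence of Theorem \ref{bredon}, and your unwinding — fix $x$, fix a generator and a representing cycle $c$, extract the neighborhood $U$ from the theorem, and let the single cycle $c$ witness local constancy at every point of $U$ — is that intended argument. One tiny remark: the parenthetical appeal to Invariance of Domain is unnecessary (and slightly backwards), since $U$ is open in $X$ and excision alone shows it is again a homology $n$-manifold.
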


We will need:

\begin{prop}
  Let $X$ be an ENR and $Z\subset X$ a Z-set. Assume that
  $X\smallsetminus Z$ is a homology $n$-manifold. Then $X$ is a
  homology $n$-manifold with boundary and $\partial X=Z$ is a homology
  $(n-1)$-manifold.
  \label{Zset}
\end{prop}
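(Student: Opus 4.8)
The plan is to verify the two hypotheses of Proposition~\ref{LocalConditionZset} for $Z$ inside $X$ (treating $Z$ now as a \emph{closed} subset rather than assuming it a priori to be a Z-set), and then separately compute the local homology of $X$ at points of $Z$ and at points of $X\smallsetminus Z$, finally invoking Mitchell's Theorem~\ref{mitchell} to identify $\partial X$.

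First I would establish that $Z$ is a Z-set in $X$. Since $Z$ is already given to be a Z-set we may actually skip this; but the substantive content is the homology computation, so I proceed there. For $x\in X\smallsetminus Z$: since $X\smallsetminus Z$ is open in $X$, excision gives $H_i(X,X\smallsetminus\{x\})\cong H_i(X\smallsetminus Z, (X\smallsetminus Z)\smallsetminus\{x\})$, which is $0$ for $i\neq n$ and $\cong\Z$ for $i=n$ because $X\smallsetminus Z$ is a homology $n$-manifold. So points of $X\smallsetminus Z$ are interior points. For $x\in Z$: the Z-set property says that for every open $U\ni x$ the inclusion $U\smallsetminus Z\hookrightarrow U$ is a homotopy equivalence, and moreover (by the homotopy formulation, $f_t:X\to X$ with $f_0=\mathrm{id}$, $f_t(X)\cap Z=\emptyset$ for $t>0$) the inclusion $X\smallsetminus Z\hookrightarrow X$ is a homotopy equivalence and, more carefully, the inclusion $X\smallsetminus\{x\}\hookrightarrow X$ deformation retracts away from $x$ as well — actually the cleanest route is: $X$ is contractible-to-away-from-$Z$, so $H_i(X,X\smallsetminus\{x\})$ can be computed by pushing everything off $Z$. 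Concretely, the long exact sequence of the triple and the Z-set homotopy $f_t$ give $H_*(X,X\smallsetminus\{x\})\cong H_*(X\smallsetminus Z, X\smallsetminus(Z\cup\{x\}))=H_*(X\smallsetminus Z, X\smallsetminus Z)=0$ in \emph{all} degrees, since once we move off $Z$ we have also moved off $x\in Z$. Hence $H_i(X,X\smallsetminus\{x\})=0$ for all $i$, in particular for $i=n$, so every point of $Z$ is a boundary point. Thus $X$ is a homology $n$-manifold with boundary and $\partial X=Z$.

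To make the pushing-off-$Z$ argument precise I would use Lemma~\ref{perturb} (or directly the homotopy $f_t$): given a relative cycle in $(X,X\smallsetminus\{x\})$, a small perturbation rel nothing moves its image off $Z$ hence off $x$, showing every class in $H_i(X,X\smallsetminus\{x\})$ dies; and a relative cycle in $(X\smallsetminus\{x\}, \varnothing)$... — more systematically, I would show the pair map $(X\smallsetminus Z, X\smallsetminus Z)\to (X, X\smallsetminus\{x\})$ induces an isomorphism on homology by a five-lemma argument comparing the two long exact sequences, using that $X\smallsetminus Z\hookrightarrow X$ and $X\smallsetminus Z\hookrightarrow X\smallsetminus\{x\}$ are both homotopy equivalences (the latter because the Z-set homotopy, which already avoids $Z\ni x$ for $t>0$, restricts to a deformation of $X\smallsetminus\{x\}$ into $X\smallsetminus Z$). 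Finally, Mitchell's Theorem~\ref{mitchell} applied to the homology $n$-manifold-with-boundary $X$ gives that $\partial X = Z$ is a homology $(n-1)$-manifold with empty boundary.

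The main obstacle I expect is the careful bookkeeping showing that the Z-set homotopy can be used to deformation retract $X\smallsetminus\{x\}$ onto a subset of $X\smallsetminus Z$ \emph{as a homotopy equivalence of pairs}, so that the relative homology $H_*(X,X\smallsetminus\{x\})$ genuinely vanishes rather than merely being computed by some spectral-sequence-type argument; one must be a little cautious because $X$ is noncompact, so I would either restrict to an orientable compact neighborhood (exhaustion by compacta, as noted in Section~\ref{Fancy}) or appeal to the local formulation in Proposition~\ref{LocalConditionZset} to keep everything local around $x$. The computation at interior points is routine excision.
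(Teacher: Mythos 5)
Your argument is correct, but your main route differs from the paper's. You deduce $H_i(X,X\smallsetminus\{x\})=0$ for $x\in Z$ globally: the Z-set definition applied to the open sets $U=X$ and $U=X\smallsetminus\{x\}$ (noting $U\smallsetminus Z=X\smallsetminus Z$ since $x\in Z$) makes both inclusions $X\smallsetminus Z\hookrightarrow X$ and $X\smallsetminus Z\hookrightarrow X\smallsetminus\{x\}$ homotopy equivalences, so $H_i(X\smallsetminus\{x\})\to H_i(X)$ is an isomorphism in every degree and the long exact sequence of the pair (your five-lemma formulation) kills $H_*(X,X\smallsetminus\{x\})$; together with excision at points of $X\smallsetminus Z$ and Theorem \ref{mitchell} this gives the statement. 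The paper instead argues chain-level and locally in flavor: a relative cycle is modeled as a map $c:(K,K_0)\to(X,X\smallsetminus z)$ of a finite pair, and Lemma \ref{perturb} (with $L=\emptyset$) pushes $c$ off $Z$ by a homotopy small enough that $c(K_0)$ never meets $z$, so the relative class is unchanged and visibly zero. Your secondary sketch ("to make the pushing-off-$Z$ argument precise I would use Lemma \ref{perturb}\dots") is exactly the paper's proof; when you write it up that way, make the smallness do its real job explicitly — it is needed so that the boundary stays in $X\smallsetminus\{z\}$ \emph{throughout} the homotopy, not merely at the end. Your five-lemma route buys a cleaner argument with no metric bookkeeping and no appeal to Lemma \ref{perturb}, at the cost of invoking the global homotopy-equivalence form of the Z-set condition; the paper's route is more hands-on and stays close to the controlled-perturbation statement it has already recorded. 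Your worries about noncompactness, orientable neighborhoods, or Proposition \ref{LocalConditionZset} are unnecessary here: singular local homology needs no such localization, and Proposition \ref{LocalConditionZset} points the wrong way (it is a criterion \emph{for} being a Z-set, which you are given).
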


\begin{proof}
  We need to check that $H_i(X,X\smallsetminus z)=0$ for all $z\in Z$
  and all $i$ and then the statement will follow from Theorem
  \ref{mitchell}. Let $c$ be a relative cycle representing a class in
  $H_i(X,X\smallsetminus z)$. Thus $c$ is a map $c:(K,K_0)\to
  (X,X\smallsetminus z)$ where $K$ is a finite complex and $K_0$ a
  subcomplex with $c|K_0$ representing $\partial c$. Use Lemma
  \ref{perturb} (with $L=\emptyset$) to homotope $c:K\to X$ to
  $c':K\to X$ by a homotopy so small that it keeps the image of $K_0$
  disjoint from $\{z\}$, and so that the image misses $Z$. Thus
  $[c]=[c']=0$.
\end{proof}

Another proposition we need is

\begin{prop}\label{LocallyEverything}
Let $X$ be an ENR homology $n$-manifold and $a\in X$. Suppose $P\subset X$
is a finite simplicial complex and assume:
\begin{itemize}
\item $a\in P$,
  \item every simplex in $P$ that contains $a$ is contained in an
    $n$-simplex, and
    \item every $(n-1)$-simplex that contains $a$ is contained in
      exactly two $n$-simplices.
\end{itemize}
Then $P$ contains a neighborhood of $a$.
\end{prop}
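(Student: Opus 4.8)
The plan is to show that $P$ carries the full local homology of $X$ at $a$, so that it must contain a neighborhood of $a$ by invariance of domain (Theorem \ref{InvarianceOfDomain}) — provided $P$ is itself a homology $n$-manifold near $a$. First I would localize: replace $P$ by the closed star $\mathrm{st}(a,P')$ of $a$ in a subdivision $P'$ of $P$, which is a compact polyhedral neighborhood of $a$ in $P$ retracting onto $P$'s local structure at $a$; the three hypotheses are inherited. The combinatorial heart of the argument is then the standard fact that a finite simplicial complex $Q$ in which every simplex containing a fixed vertex $a$ lies in an $n$-simplex, and every $(n-1)$-simplex containing $a$ lies in exactly two $n$-simplices, has link $\mathrm{lk}(a,Q)$ a homology $(n-1)$-manifold which is in fact a homology $(n-1)$-sphere — equivalently $\mathrm{st}(a,Q)$ is a homology $n$-cell with $a$ in its interior, so $H_i(Q,Q\smallsetminus a)\cong H_i(\R^n,\R^n\smallsetminus 0)$. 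I would prove this by a downward induction on skeleta: the "exactly two $n$-simplices" condition says the link is a closed pseudomanifold, and the "every simplex is in an $n$-simplex" condition propagates this so that $\mathrm{lk}(a,Q)$ has the local homology of $S^{n-1}$ at each of its points; combined with the fact that a polyhedral homology manifold which is also a polyhedron has the expected local structure, one gets that $\mathrm{lk}(a,Q)$ is a homology $(n-1)$-manifold. Then $\mathrm{st}(a,Q)$, being the cone on a homology $(n-1)$-manifold with $a$ the cone point, is a homology $n$-manifold at $a$ and at all nearby interior points.

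With that in hand, the argument concludes as follows. Let $N = \mathrm{st}(a, P')$; by the above $N$ is a homology $n$-manifold at $a$, and since the conditions are open (a simplex near $a$ still lies in an $n$-simplex, etc.), $N$ is a homology $n$-manifold in a neighborhood $V$ of $a$ inside $N$. Now $V$ is a subset of the ambient homology $n$-manifold $X$ which is itself a homology $n$-manifold, so by Bredon's Invariance of Domain (Theorem \ref{InvarianceOfDomain}) $V$ is open in $X$. Since $a\in V\subset N\subset P$, $P$ contains the neighborhood $V$ of $a$, as desired.

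The step I expect to be the main obstacle is the purely combinatorial-topological claim that the link of $a$ in $P$ (after subdivision) is a homology $(n-1)$-manifold. The two bulleted hypotheses are exactly the conditions that make $\mathrm{lk}(a,P)$ a closed $n-1$-pseudomanifold in which, moreover, every cell is a face of a top cell — but pseudomanifold is weaker than homology manifold, so I must bootstrap: one has to check the local homology $H_*(\mathrm{lk}(a,P), \mathrm{lk}(a,P)\smallsetminus x)$ at a point $x$ in a simplex $\tau$, which unwinds to the local homology of $\mathrm{lk}(\tau, \mathrm{lk}(a,P)) = \mathrm{lk}(a*\tau, P)$, a complex of dimension $n-1-\dim\tau$ to which the inductive hypothesis applies. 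Getting the bookkeeping of iterated links right, and verifying the "exactly two" and "contained in a top simplex" conditions pass to these iterated links, is where the care is needed; the ambient manifold hypothesis on $X$ is not used until the very last line. An alternative, perhaps cleaner, route avoids links entirely: directly build a relative $n$-cycle $c$ in $P$ near $a$ by taking $c = \sum \sigma$ over the (finitely many) $n$-simplices of $P$ containing $a$, oriented compatibly — here the "exactly two $n$-simplices" condition is precisely what allows the orientations to be chosen so that $\partial c$ misses a neighborhood of $a$ — show $[c]$ generates $H_n(X, X\smallsetminus a)$ using local orientability of $X$ (Theorem \ref{bredon}), and then use Theorem \ref{bredon} again to conclude $[c]$ generates $H_n(X, X\smallsetminus y)$ for all $y$ near $a$, forcing those $y$ to lie on the support of $c$, hence in $P$.
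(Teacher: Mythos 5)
Your primary route breaks at the purely combinatorial claim that the two bulleted hypotheses force $\mathrm{lk}(a,P)$ to be a homology $(n-1)$-sphere (or even a homology $(n-1)$-manifold). They do not: the ``exactly two'' hypothesis only constrains the $(n-1)$-faces containing $a$, so the link is merely a closed pseudomanifold all of whose simplices lie in top-dimensional simplices. For $n=2$ the link could a priori be a disjoint union of two circles (two triangulated disks meeting only at the common interior vertex $a$), and for $n=3$ it could be a pinched torus (every edge still lies in exactly two triangles); in such cases $H_n(P,P\smallsetminus a)$ is not $\Z$ and the star of $a$ is not a homology manifold at $a$. Your proposed bootstrap through iterated links cannot repair this, because the hypotheses say nothing about links of faces of codimension $\geq 2$ at $a$ --- that is exactly where the pinched examples live. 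What rules these configurations out is the ambient homology manifold $X$ (e.g.\ two disks meeting at a single interior point cannot embed in a homology $2$-manifold, since by Theorem \ref{InvarianceOfDomain} each would contain an open neighborhood of $a$), so an argument that, in your own words, does not use $X$ ``until the very last line'' cannot succeed.

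Your alternative route at the end is essentially the paper's actual proof, but one attribution in it needs fixing: the compatible orientations of the $n$-simplices are \emph{not} supplied by the ``exactly two'' condition (a pseudomanifold can be nonorientable); the paper first subdivides so that the relevant part of $P$ lies in an orientable neighborhood $U$ of $a$, notes via Invariance of Domain that each open $n$-simplex is open in $X$, and pulls back the local orientation of $U$ to fix the signs $\epsilon_\sigma=\pm1$ in $c=\sum\epsilon_\sigma\sigma$. The ``exactly two'' hypothesis then makes the $(n-1)$-faces containing $a$ cancel in $\partial c$, so $c$ is a relative cycle near $a$; by local constancy (Theorem \ref{bredon}) its class is $k$ times the preferred generator at every point of a neighborhood of $a$, evaluating at a point interior to an open $n$-simplex gives $k=1$, and a point near $a$ outside the compact set $P$ would have $[c]$ restricting (by excision) to zero, a contradiction. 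If you discard the link argument and flesh out this cycle argument, you have the intended proof.
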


\begin{proof}
  We may assume, by ordering the vertices of $P$, 
   that $P$ is also a
  $\Delta$-complex, i.e. the set of vertices of every simplex is
  compatibly ordered.
  First note that every open $n$-simplex $\sigma$ is an open set in $X$ by
  Invariance of Domain. In particular, $\sigma$ represents a generator
  of $H_n(X,X\smallsetminus x)$ for every $x$ in the interior of
  $\sigma$. Now suppose that we have two $n$-simplices
  $\sigma,\tau$ that share an $(n-1)$-face $\eta$. Subtracting all
  faces except for $\eta$ from $\sigma\cup\tau$ produces an
  $n$-manifold, which must also be open in $X$ by Invariance of
  Domain. 

  By subdividing $P$ and passing to a subcomplex we may assume that
  $P$ is contained in an orientable neighborhood $U$ of $a$, and
  choose an orientation of $U$. Consider the chain
  $c=\sum\epsilon_\sigma \sigma$, where the sum is over all
  $n$-simplices $\sigma$ and $\epsilon_\sigma=\pm 1$ depending on whether the
  orientations of $\sigma$ coming from the ordering and the
  orientation of $U$ agree or disagree. It now follows that in the
  chain $\partial c$ all $(n-1)$-simplices that contain $a$ cancel, in
  other words $c$ represents a local homology class in
  $H_n(X,X\smallsetminus x)$ for $x$ in a neighborhood of $a$. By the
  local constancy of the sheaf this class is $k$ times the preferred
  generator coming from the orientation. But for points near $a$
  inside an open $n$-simplex we see that $k=1$. In particular, $P$
  must contain a neighborhood of $a$.
  Indeed, if $x\in U\setminus P$, then since $P$ is closed (because it
  is compact) there exists an open $V\subset U$, a neighborhood of $x$
  such that $P$ does not meet $V$. Therefore the restriction (by
  excision) of $c$ to to $H_n(X,X\smallsetminus\{x\})$ is trivial and
  so by Theorem \ref{bredon} $x$ does not belong to a fixed neighborhood of $a$.
\end{proof}

\section{Topological median algebras}
\label{TopMedAlg}

A \emph{topological median algebra} is a median algebra $M$ with a topology so that the median map $M^3\to M$ is continuous. As noted above, we will simply use the term \emph{median structure} for a topological median algebra structure on a given space. 

In this paper, we will address median structures
on nice spaces like $\R^n$ and more generally, ER homology manifolds. 

To begin with, we will assume that $M$ is Hausdorff.
Note that the retraction to an interval $[a,b]$ defined above ($x\to
abx$) is continuous, so that intervals in topological median algebras
are topological retracts of the ambient space. Thus, intervals are
always closed, and if $M$ is path-connected so are all intervals.

Intervals may not be compact (see Example \ref{NonCompactExample}) . However, in locally compact spaces, sufficiently small
intervals are compact. In fact, we have the following local convexity
result, which we shall employ. We say $M$ is \emph{locally convex} if
every point has an arbitrarily small convex neighborhood (not
necessarily open).

\begin{thm}(\cite[12.2.4 and 12.2.5]{Bowditch24})
If $M$ is connected, locally compact and finite rank, then $M$ is
locally convex. In fact, every point has arbitrarily small compact
convex neighborhoods. 
\label{LocallyConvex}
\end{thm}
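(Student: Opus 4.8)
The plan is to reduce the statement to Bowditch's structure theory for finite-rank median algebras, organized around a single point: it suffices to show that inside any prescribed open neighborhood $W$ of a point $p$ (which we may take to be the interior of a compact neighborhood) there is a \emph{convex} neighborhood of $p$. The reason this suffices is that the closure of a convex set is again convex in any topological median algebra: if $C$ is convex and $x,y\in\overline C$, choose $x_k\to x$, $y_k\to y$ in $C$; for $z\in[x,y]$ the point $\rho_{x_ky_k}(z)=x_ky_kz$ lies in $[x_k,y_k]\subseteq C$, and by continuity of the median map $x_ky_kz\to xyz=z$, so $z\in\overline C$. Thus a convex neighborhood $V$ of $p$ with $\overline V\subseteq W$ yields a compact convex neighborhood $\overline V$, and since $W$ is arbitrary these are arbitrarily small. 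So the whole problem is to show that sufficiently small neighborhoods have small convex hulls.

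The first ingredient is the two-point case. Fix a compact neighborhood $K$ of $p$ with $p\in\operatorname{int}K$. Since the map $(x,y,z)\mapsto xyz$ is continuous and sends $(p,p,z)$ to $p$, compactness of $K$ in the $z$-slot gives, for any open $W\ni p$, a neighborhood $W'\subseteq\operatorname{int}K$ of $p$ with $xyz\in W$ for all $x,y\in W'$ and all $z\in K$; i.e. $\rho_{xy}(K)\subseteq W$ whenever $x,y\in W'$. As $\rho_{xy}$ retracts onto $[x,y]$, for such $x,y$ we get $[x,y]\cap K=\rho_{xy}([x,y]\cap K)\subseteq\rho_{xy}(K)\subseteq W$, so $[x,y]$ misses the frontier of $K$; since $M$ is path-connected (this is part of the general theory of connected, locally compact, finite-rank median algebras, and then intervals, being retracts, are connected) and $[x,y]$ contains $x\in\operatorname{int}K$, connectedness forces $[x,y]\subseteq W$. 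In short: for every open $W\ni p$ there is an open $W'\ni p$ with $[x,y]\subseteq W$ whenever $x,y\in W'$.

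The second ingredient bootstraps from intervals to hulls, and this is where finite rank enters. Let $n=\rank(M)$. Starting from the target neighborhood $W=W_0$ (chosen with $\overline{W_0}$ compact) iterate the previous paragraph to obtain nested neighborhoods $W_0\supseteq W_1\supseteq\cdots\supseteq W_{n+1}$ of $p$ with $[x,y]\subseteq W_{j-1}$ whenever $x,y\in W_j$. Now invoke two standard facts about median algebras of rank $n$: the Carath\'eodory theorem $\hull(A)=\bigcup\{\hull(F):F\subseteq A,\ |F|\le n+1\}$, and the cone identity $\hull(C\cup\{a\})=\bigcup_{x\in C}[a,x]$ for convex $C$ (so that $\hull$ of a finite set is built by $|F|-1$ cone operations). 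An induction on $|F|$ then gives $\hull(F)\subseteq W_{\,n+1-|F|}$ for every $F\subseteq W_{n+1}$: the cases $|F|\le 2$ are immediate from the nesting, and for $F=\{a_0\}\cup F'$ one has $\hull(F)=\bigcup_{x\in\hull(F')}[a_0,x]$ with both endpoints of each interval lying in $W_{\,n+1-|F'|}$. Taking $|F|=n+1$ yields $\hull(W_{n+1})\subseteq W_0$, so $\overline{\hull(W_{n+1})}$ is a compact convex neighborhood of $p$ contained in $W$; letting $W$ shrink gives the theorem.

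The main obstacle is not the topological packaging above but the two median-algebraic inputs it rests on: that such $M$ is path-connected (hence has connected intervals), and the finite-rank Carath\'eodory bound together with the behaviour of $\hull$ under cone operations. The Carath\'eodory bound is precisely the step that fails without finite rank --- in infinite rank, hulls of arbitrarily small sets need not be small, or even relatively compact --- so finite rank is genuinely essential here. These facts are developed in Bowditch's monograph \cite{Bowditch24}, which is why the theorem is quoted from there; the argument sketched above is the skeleton onto which those inputs are grafted.
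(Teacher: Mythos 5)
The paper offers no proof of Theorem \ref{LocallyConvex} at all: it is quoted directly from Bowditch \cite[12.2.4, 12.2.5]{Bowditch24}, so there is no internal argument to measure yours against. Your skeleton looks correct: closure of a convex set is convex by exactly the continuity argument you give; the compactness/continuity step producing, for each open $W\ni p$, a smaller $W'$ with $[x,y]\subseteq W$ for all $x,y\in W'$ works (two small repairs: you must arrange $W\subseteq\operatorname{int}K$ so that $[x,y]\cap K\subseteq W$ really forces $[x,y]$ to avoid the frontier of $K$, and plain connectedness of $M$ already gives connectedness of $[x,y]=\rho_{xy}(M)$, so the appeal to path-connectedness is unnecessary); and the nested-neighborhood induction is consistent. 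The substantive issue is that the two load-bearing inputs are only asserted: join-hull commutativity ($\hull(C\cup\{a\})=\bigcup_{x\in C}[a,x]$ for convex $C$) and a Carath\'eodory bound for hulls in terms of rank. Both are true --- for the latter, if $F$ is a minimal subset with $p\in\hull(F)$ and $|F|=k\ge 3$, minimality gives halfspaces $W_1,\dots,W_k\ni p$ with $W_i\cap F=\{a_i\}$, and the corresponding walls pairwise cross (the four quarters contain $p$, $a_i$, $a_j$, and any $a_l$ with $l\neq i,j$), so $k\le\rank(M)$ --- but these are precisely the pieces of median-algebra structure theory the paper is citing Bowditch for, so as written your argument is a reduction to other results of the same source rather than a self-contained proof; either prove these two facts or cite them precisely. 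Note also that the route echoed elsewhere in this paper controls hulls by iterated joins $J^k$ of a small set with $k$ bounded in terms of the rank, which plays the same role as your Carath\'eodory-plus-cone induction.
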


\subsection{Standing assumptions on $M$}

We restrict our considerations to median structures on $M$ that are homeomorphic to
a closed subset of an ER homology manifold. This is equivalent to
assuming that $M$ is locally compact, metrizable, separable and finite
dimensional. Thus we do not consider spaces like the long line or the
Hilbert cube. In addition, we will assume that $M$ is path connected
to avoid discrete median structures like the vertex set of a $CAT(0)$ cube
complex. By Proposition \ref{EmbeddedCube} below, $M$ will have
finite rank.

\begin{prop}\label{ER}
  Under these assumptions, if $M$ supports a median structure then $M$
  is a Euclidean retract (ER).
\end{prop}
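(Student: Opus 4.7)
The plan is to prove $M$ is a contractible ENR, which by the characterizations recalled in Section \ref{Fancy} is equivalent to $M$ being an ER. Since $M$ satisfies the standing topological hypotheses (locally compact, separable, metrizable, finite-dimensional), the ENR condition reduces to local contractibility, so what remains is to verify (a) local contractibility and (b) global contractibility.

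For (a), I would apply Theorem \ref{LocallyConvex} to obtain, at every $a \in M$, arbitrarily small compact convex neighborhoods; it then suffices to show that every compact convex subset $C \subset M$ is contractible. For (b), observe that $M$ is itself convex in itself, and that local compactness together with local convexity lets us exhaust $M$ by nested compact convex subsets $K_1 \subset K_2 \subset \cdots$ with $\bigcup K_n = M$. Once each $K_n$ is known to be contractible and compatible basepoints are chosen, $M$ is weakly contractible; combined with local contractibility (already established in (a)) and the Whitehead theorem for ENRs cited in Section \ref{Fancy}, this upgrades to genuine contractibility of $M$.

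Both reductions thus hinge on the key lemma that every compact convex subset $C$ of $M$ (with the inherited median structure) is contractible. To produce a deformation retraction of $C$ to a basepoint $a \in C$, I would exploit the continuity of the median operation and, in particular, the continuity of the gate maps $\rho_{ab}(x) = abx$ onto intervals. A natural strategy is induction on the rank of $M$: in rank $\leq 1$ the compact convex set $C$ is a compact topological dendrite, where contractibility is classical; in higher rank, one uses the product decomposition of median cubes (Lemma \ref{Bromberg} and Corollary \ref{BrombergNdim}) to iteratively gate onto smaller convex subsets of lower effective rank, producing the contraction in the limit via compactness.

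The main obstacle I anticipate is precisely this continuous construction of a deformation retraction of a compact convex subset in arbitrary rank, which requires exploiting the full strength of the finite-rank and compactness hypotheses to control the topology of nested intervals in the absence of a genuine metric on $M$. The rest of the argument is essentially a packaging of this contractibility lemma together with the ENR/ER characterizations and the exhaustion by compact convex subsets.
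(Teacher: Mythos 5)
Your argument hinges on an unproven key lemma: that every compact convex subset of $M$ is contractible, to be established by an explicit deformation retraction built from ``iterated gating'' and a limit using compactness. As you yourself note, this is the main obstacle, and it is not a step that can be completed routinely -- constructing such a contraction in arbitrary rank is essentially as hard as the proposition itself, and it is precisely what the paper's proof avoids. The paper instead uses Fioravanti's observation (Proposition \ref{Fioravanti}): for any path-connected space with a median structure, a map $f:(S^n,*)\to (M,*)$ factors as $q\circ\Delta$ with $q(a,b)=f(a)f(b)*$, and since the diagonal $\Delta$ is homotopic into $S^n\times *\cup *\times S^n$, on which $q$ is constant by majority rules, all homotopy groups vanish. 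Applying this to the small compact convex neighborhoods provided by Theorem \ref{LocallyConvex} gives only \emph{weak} contractibility of those neighborhoods, but that is enough, because the paper verifies the $LC^n$ criterion for being an ENR rather than full local contractibility; Whitehead's theorem together with $\pi_i(M)=0$ then upgrades to contractibility of $M$. By reducing instead to genuine local contractibility, you have taken on a strictly stronger burden than needed, and the part of your plan that would discharge it is missing.

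There is also a concrete error in your route to global contractibility: under the standing assumptions one cannot in general ``exhaust $M$ by nested compact convex subsets.'' Interval compactness is not assumed, and the paper's Example \ref{NonCompactExample} (the $\ell_1$ median structure on the plane minus a closed quadrant, which is homeomorphic to $\R^2$) is a counterexample: the interval between $(1,-1)$ and $(-1,1)$ is closed in that space but not compact, so any compact convex set containing these two points would contain a closed non-compact subset, which is impossible; hence no compact convex set contains them both and no exhaustion by compact convex sets exists. (The paper obtains such exhaustions only later, in Lemma \ref{ConvexExhaustion}, under the additional hypothesis of interval compactness.) Fioravanti's argument sidesteps this as well, since it yields $\pi_i(M)=0$ directly with no compactness or exhaustion required.
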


In the proof we will need the following fact, due to Elia Fioravanti (see
\cite{Bowditch24}).

\begin{prop}[Fioravanti]\label{elia}
  If a path connected space $M$  admits a median structure, we have $\pi_n(M,*)=0$ for
  $n>0$ and any basepoint $*$.
  \label{Fioravanti}
\end{prop}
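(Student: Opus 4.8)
The plan is to argue directly with the disk model of the homotopy groups. Fix a basepoint $p$ and $n\ge 1$; an element of $\pi_n(M,p)$ is represented by a continuous map $g\colon(D^n,\partial D^n)\to(M,\{p\})$, and two such maps represent the same class precisely when they are homotopic through maps of pairs $(D^n,\partial D^n)\to(M,\{p\})$. I will show that every such $g$ is null-homotopic through such maps, by using the continuous median operation to ``cone $g$ off'' onto a single median interval and then observing that intervals are contractible. The conceptual point is that, although $M$ carries no obvious global contraction (there is no preferred ``center''), the parametrizing disk $D^n$ does have one, and we exploit the $g$-image of that center.

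First I would introduce the coning homotopy. Let $q=g(0)\in M$ and let $r_u\colon D^n\to D^n$, $r_u(z)=(1-u)z$, be the radial contraction to the center, with $r_0=\mathrm{id}$ and $r_1\equiv 0$. Define $G\colon D^n\times[0,1]\to M$ by letting $G(z,u)$ be the median of the three points $p$, $g(z)$ and $g(r_u(z))$; this is continuous because the median map is. A short computation with the median axioms shows: $G(z,0)$ is the median of $p,g(z),g(z)$, hence equals $g(z)$; $G(z,1)$ is the median of $p,q,g(z)$, namely $\rho_{pq}(g(z))\in[p,q]$; and for $z\in\partial D^n$, where $g(z)=p$, the value $G(z,u)$ is the median of $p,p,g(r_u(z))$, hence equals $p$. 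Thus $G$ is a homotopy of pairs from $g$ to $\rho_{pq}\circ g$, a map whose image lies in the single interval $[p,q]$.

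It then remains to contract $[p,q]$ to $p$ compatibly. Choosing a path $\delta\colon[0,1]\to M$ from $p$ to $q$ (which exists since $M$ is path connected), define $K\colon[p,q]\times[0,1]\to M$ by letting $K(x,t)$ be the median of $p,\delta(t),x$. For $x\in[p,q]$ this median lies in $[p,x]\subseteq[p,q]$, since a subinterval of an interval is contained in it, so $K$ takes values in $[p,q]$; moreover $K(x,0)=p$ by the majority axiom, $K(x,1)=x$ by the defining property of $[p,q]$, and $K(p,t)=p$. Hence $K$ contracts $[p,q]$ to $p$ while fixing $p$. Since $\rho_{pq}$ fixes $p$, the map $\rho_{pq}\circ g$ still sends $\partial D^n$ into $\{p\}$, so composing it with $K$ gives a homotopy of pairs from $\rho_{pq}\circ g$ to the constant map at $p$. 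Concatenating with $G$ shows $[g]=0$ in $\pi_n(M,p)$; since $g$ and the basepoint were arbitrary, this gives $\pi_n(M,*)=0$ for all $n>0$.

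I do not anticipate a serious obstacle: each verification is a one-line application of the median axioms. The only points needing care are that every homotopy produced respects the boundary condition (it sends $\partial D^n$ into $\{p\}$) and that the coned-off maps genuinely land in the sub-interval $[p,q]$; both follow automatically from the identity ``the median of $p$, anything, and $p$ equals $p$'' and the lemma that a subinterval of an interval is contained in it, using that $g$ is already constant on $\partial D^n$. For comparison, one could also proceed by induction on $n$, splitting $S^n$ along an equatorial $S^{n-1}$ through the basepoint, using $\pi_{n-1}(M)=0$ together with the homotopy extension property to deform a representative to a map that is constant on the equator, and then applying the coning construction to the two hemispheres; the disk model simply makes this induction unnecessary.
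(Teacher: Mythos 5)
Your argument is correct: every step checks against the median axioms — $G(z,0)=g(z)$ and $G(z,u)=p$ on $\partial D^n$ by majority rules, $G(z,1)=\rho_{pq}(g(z))$ lands in the interval $[p,q]$, and $K(x,t)=p\,\delta(t)\,x$ contracts $[p,q]$ to $p$ rel $p$ since $p\delta(t)x\in[p,x]\subseteq[p,q]$; both homotopies are homotopies of pairs, so $[g]=0$. It is, however, a different execution from the paper's. The paper factors a based map $f\colon S^n\to M$ as $q\circ\Delta$, where $\Delta$ is the diagonal and $q(a,b)=f(a)f(b)*$, and then invokes cellular approximation: $\Delta$ is homotopic into the $n$-skeleton $S^n\times\{*\}\cup\{*\}\times S^n$ of $S^n\times S^n$, on which $q$ is constant, so $f$ is nullhomotopic in one stroke. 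You use the same algebraic mechanism (the ternary median with the basepoint inserted, for which $*$ acts as an absorbing element), but you replace the cellular-approximation input by the explicit radial contraction of $D^n$; the price is that this only deforms $g$ onto $\rho_{pq}\circ g$, whose image is the interval $[p,g(0)]$, so you need a second stage contracting that interval along a path (path connectedness — or, even more cheaply, the path $t\mapsto g((1-t)z_0)$ for a fixed $z_0\in\partial D^n$, supplied by $g$ itself). What your route buys is a completely elementary, explicit and self-contained null-homotopy needing no facts about CW structures; the paper's route is shorter and slicker but leans on a standard piece of algebraic topology about the diagonal of $S^n\times S^n$.
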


\begin{proof}
  Let
  $f:(S^n,*)\to (M,*)$ be any map. We have a factorization of $f$ as
  $q\Delta$ where $\Delta:S^n\to S^n\times S^n$ is the diagonal map
  and $q(a,b)=f(a)f(b)*$. Now the claim follows since $\Delta$ is
  homotopic into the $n$-skeleton $S^n\times *\cup *\times S^n$ on
  which $q$ is constant.
\end{proof}

\begin{proof}[Proof of Proposition \ref{ER}]
  If $\dim M=n$, then $M$ is an ENR iff it is locally $n$-connected ($LC^n$),
  i.e. for every $x\in M$ and every neighborhood $U$ of $x$ there is a
  neighborhood $V\subset U$ such that inclusion $(V,x)\hookrightarrow (U,x)$
  is trivial in $\pi_i$ for $i=0,1,\cdots,n$ (see e.g.
  \cite[Theorem 7.1]{hu}). Given $x\in U$ we apply Theorem
  \ref{LocallyConvex} to find a compact neighborhood $N\subset U$ of
  $x$. Letting $V=\mathring N$ be the interior of $N$ and using the fact that $\pi_i(N)=0$ for all
  $i$ (by Proposition \ref{Fioravanti}), we see that $M$ is $LC^n$ and hence an ENR. Now every ENR is
  homotopy equivalent to a CW complex so
  Whitehead's theorem combined with $\pi_i(M)=0$ for all $i$ implies
  that $M$ is contractible and hence an ER.
\end{proof}

Since convex subsets inherit the ambient median structure,  we
immediately obtain

\begin{cor}\label{LocallyClosed}
  Every locally compact convex subset of $M$ is also an ER.
\end{cor}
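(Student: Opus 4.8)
The plan is to observe that a (nonempty) locally compact convex subset $C\subseteq M$ satisfies all of the standing assumptions imposed on $M$, so that Proposition \ref{ER} applies verbatim to $C$. There are three things to verify: that $C$ carries an inherited topological median structure, that $C$ is locally compact, metrizable, separable and finite dimensional, and that $C$ is path connected.

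For the first point, recall that for $a,b,c\in C$ the median $abc$ lies in $[a,b]$ by \eqref{retraction}, and $[a,b]\subseteq C$ by convexity; hence the ternary operation of $M$ restricts to a ternary operation on $C$, which satisfies the median axioms since these are universally quantified identities. Continuity of this operation on $C^3$ is immediate from continuity on $M^3$ together with the fact that $C$ carries the subspace topology. Thus $C$ is a topological median algebra, and it is Hausdorff as a subspace of the Hausdorff space $M$.

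For the second point, $C$ is locally compact by hypothesis, and it is metrizable, separable, and of covering dimension at most $\dim M<\infty$ simply because all three properties are inherited by subspaces of a separable metric space of finite dimension. Hence $C$ is homeomorphic to a closed subset of an ER homology manifold, exactly as demanded by the standing assumptions. For the third point, fix $a,b\in C$. The gate map $\rho_{ab}\colon M\to[a,b]$ is a continuous retraction, so $[a,b]$ is a retract of the path connected space $M$ and is therefore itself path connected; since $[a,b]\subseteq C$ by convexity, $a$ and $b$ are joined by a path inside $C$. Thus $C$ is path connected.

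All the standing assumptions now hold for $C$, and since $C$ supports a median structure, Proposition \ref{ER} gives that $C$ is an ER. There is no genuine obstacle here: the only content is the routine observation that convexity makes $C$ a median subalgebra and that path connectedness of $C$ is forced by path connectedness of intervals. (Everything is of course vacuous or trivial if $C=\emptyset$, which is why we assume $C$ nonempty.)
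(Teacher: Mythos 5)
Your proposal is correct and takes essentially the same route as the paper, which obtains the corollary immediately from Proposition \ref{ER} after noting that convex subsets inherit the ambient median structure. The verifications you supply (heredity of metrizability, separability and finite dimension, plus path connectedness of $C$ via path connectedness of intervals) are exactly the routine details the paper leaves implicit.
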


\subsection{Median structures on $\R$}
We start by establishing that $\R$ has a unique median structure.

\begin{prop}
  There is only one median structure on $\R$: if $a\leq x\leq b$ then
  $axb=x$.
\end{prop}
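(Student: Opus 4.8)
The plan is to show that the topological median algebra structure on $\R$ is forced to agree with the standard betweenness relation $a\le x\le b$, and then invoke Sholander's Theorem. The key point is that in any topological median algebra $M$ whose underlying space is $\R$, the interval $[a,b]$ between two points is a closed, connected (since $\R$ is path connected, intervals are connected as noted in the excerpt), convex subset; I first want to identify these intervals with the usual closed intervals of $\R$.

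First I would fix $a\ne b$ in $\R$ and consider $I=[a,b]$ in the median sense. As observed, $I$ is a closed subset and (by path-connectedness of $\R$ and continuity of the gate map $\rho_{ab}$) it is connected, hence $I$ is a closed sub-interval of $\R$ in the order sense, i.e. $I=[\alpha,\beta]$ (standard interval) with $a,b\in I$. I need to rule out $I$ being a proper sub-interval or overshooting $\{a,b\}$. The containment $a,b\in I$ is immediate from the majority and distributivity axioms ($aba=a$, $abb=b$). For the reverse, I would use that $I$ is convex: if some point $c$ of $\R$ lies strictly between $a$ and $b$ in the order sense but $c\notin I$, I derive a contradiction from local convexity or more elementarily from the fact that the connected set $I$ must then be disconnected or fail to contain a point it should. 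Concretely, the cleaner route is: the median $axb$ for $a\le x\le b$ must be a point of $[a,b]$ (median interval), and I claim the map $x\mapsto axb$ on the standard interval $[\alpha,\beta]\subset\R$ is a continuous retraction onto the median interval; combined with $a\mapsto a$, $b\mapsto b$ and connectedness this forces the median interval to be exactly the standard $[a,b]$ and the gate map to be identity there.

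The concrete argument I would actually write: for $a<b$ in $\R$, define $J_{ab}$ to be the standard closed interval. Using the symmetry and distributivity axioms one checks $\rho_{ab}$ restricted to $J_{ab}$ is a retraction onto the median interval $[a,b]_{\mathrm{med}}$, which contains $\{a,b\}$; since $J_{ab}$ is connected and $[a,b]_{\mathrm{med}}$ is its retract containing both endpoints, and since any retract of an interval containing the endpoints equals the interval (a retract of $[0,1]$ containing $0,1$ is all of $[0,1]$, because its complement would be open and the interval is connected — more carefully, a retract of a connected space is connected, so $[a,b]_{\mathrm{med}}$ is a connected subset of $\R$ containing $a$ and $b$, hence contains $J_{ab}$, and conversely $\rho_{ab}$ maps into it so equality would need... ) — here I must be a little careful about the reverse containment $[a,b]_{\mathrm{med}}\subset J_{ab}$. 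For that I would argue by symmetry/contradiction: if $x>b$, then $b\in [a,x]_{\mathrm{med}}$ would have to hold by the structure, and one shows $x\in[a,b]_{\mathrm{med}}$ and $b\in[a,x]_{\mathrm{med}}$ with $x\ne b$ contradicts the Observation that $[a,b]\cap[b,x]=\{b\}$ when $b\in[a,x]$, unless $x=b$; the analogous argument handles $x<a$. So $[a,b]_{\mathrm{med}}=J_{ab}$.

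Once intervals are pinned down as the standard ones, the gate map $\rho_{ab}:\R\to[a,b]$ is a continuous retraction of $\R$ onto $J_{ab}$; such a retraction of $\R$ onto $[a,b]$ is unique — it must be $x\mapsto\max(a,\min(b,x))$ for $a\le b$ — because a retraction of $\R$ onto a closed interval is determined (it is constant $=a$ on $(-\infty,a]$, constant $=b$ on $[b,\infty)$, identity in between, by connectedness and the retraction property together with monotonicity-type reasoning, or simply: the restriction to $J_{ab}$ is the identity, and on each complementary ray the image is a connected subset of $J_{ab}$ whose closure meets the endpoint, and by retraction it must be the single endpoint). Hence $\rho_{ab}(x)=axb$ equals $x$ precisely when $x\in J_{ab}$, which says $x\in[a,b]_{\mathrm{med}}\iff a\le x\le b$ (for $a\le b$). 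Finally I would verify the three Sholander hypotheses for the map $I(a,b)=\{x: a\wedge b\le x\le a\vee b\}$: (1) $I(a,a)=\{a\}$ is clear; (2) $c,d\in I(a,b)$ implies $I(c,d)\subset I(a,b)$ is immediate for standard intervals; (3) for any $a,b,c$, the triple intersection $I(a,b)\cap I(b,c)\cap I(a,c)$ is a single point (the median of the three reals in the usual sense). By Sholander's Theorem this $I$ comes from a median structure, and by the computation above it is exactly the given one, so $axb$ is the standard median of three reals; in particular when $a\le x\le b$ we get $axb=x$.

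The main obstacle I anticipate is the reverse containment $[a,b]_{\mathrm{med}}\subset J_{ab}$, i.e. ruling out that the median interval "overshoots" the endpoints in $\R$ — equivalently ruling out that a median interval can be a ray or all of $\R$ when its defining endpoints are finite. The Observation $[a,c]\cap[c,b]=\{c\}$ for $c\in[a,b]$, together with the connectedness of intervals and a case analysis on the position of a hypothetical overshooting point, should close this; alternatively Theorem \ref{LocallyConvex} (local convexity, since $\R$ is connected, locally compact, and finite rank) can be used to control intervals locally and then bootstrap. Everything else is either a direct application of the median axioms and the distributive law, or a standard fact about retractions of $\R$ onto intervals and about the standard median on $\R$ satisfying Sholander's hypotheses.
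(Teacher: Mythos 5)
Your proposal is correct in its essential chain, but it is much heavier than needed, and the route differs from the paper's. The paper's entire proof is the containment you establish first and then treat as merely a preliminary: the median interval $[a,b]$ is the image of the continuous gate retraction $\rho_{ab}$, hence a connected subset of $\R$ containing $a$ and $b$, hence it contains every $x$ with $a\leq x\leq b$, and for such $x$ the defining property of the interval gives $axb=x$. That already proves the proposition, since for any triple of reals one point lies order-between the other two, so the displayed formula determines the median operation completely; no identification of $[a,b]_{\mathrm{med}}$ with exactly the order interval, and no appeal to Sholander, is required. What you flag as the ``main obstacle'' (ruling out overshooting, i.e. $[a,b]_{\mathrm{med}}\subset J_{ab}$) is therefore unnecessary, though your argument for it does work and can be streamlined: if $a<b<x$ and $x\in[a,b]_{\mathrm{med}}$, then by the connectedness argument $b\in[a,x]_{\mathrm{med}}$, so $abx=x$ and $abx=b$ simultaneously, forcing $x=b$. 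One genuine (but dispensable) error to correct: a continuous retraction of $\R$ onto a closed interval is \emph{not} unique --- for instance a map that folds $[b,\infty)$ back over $[a,b]$ is a retraction different from $x\mapsto\max(a,\min(b,x))$ --- so your uniqueness-of-retractions step is false as stated; fortunately you never need it, because $\rho_{ab}(x)=x$ if and only if $x\in[a,b]_{\mathrm{med}}$ holds by the definition of the interval, which together with $[a,b]_{\mathrm{med}}=J_{ab}$ (or even just the forward containment) gives the conclusion directly. The Sholander verification at the end is likewise superfluous: once the intervals are pinned down, the median of $a,b,c$ is the unique point of the triple intersection of the three intervals, which is the usual middle point.
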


\begin{proof}
  The median interval $[a,b]$ is a retract of $\R$ so it is
  connected. Therefore if $a\leq x\leq b$, then $x\in [a,b]$ and so $axb=x$.
\end{proof}

The same argument works to show that any real tree has a unique
topological median structure (by a \emph{real tree} we mean a
Hausdorff, locally connected topological space, such that any two points are joined by a
unique embedded arc). In this case, the median $abc$ is the center of
the tripod spanned by $a,b,c$. (Note that the local connectivity assumption above is necessary. For example, the Warsaw circle is uniquely arc-connected, yet it is not difficult to see that it admits no median structure. )

 \subsection{Rank 1 intervals}
 
 We say that an interval $[a,b]$ is \emph{rank 1} if it is rank  at most 1 as a median algebra (this allows for the case that the interval is a single point). This case is described for topological median algebras in \cite{Bowditch24} in the case that intervals are compact. We give a self-contained account here in our setting. 

We say that an interval $[a,b]$ is \emph{additive} if for every $x\in [a,b]$, we have that $[a,x]\cup[x,b]=[a,b]$. Recall that we always have $[a,x]\cup[x,b]\subset [a,b]$
 \begin{lemma}
 The following are equivalent. 
 
\begin{enumerate}
\item $[a,b]$ is rank 1.
\item $[a,b]$ is additive.
\item $[a,b]$ is an arc with endpoints $a$ and $b$. 
\end{enumerate}
\label{Additive}
 \end{lemma}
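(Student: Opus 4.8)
The plan is to prove the two equivalences $(1)\Leftrightarrow(2)$ and $(2)\Leftrightarrow(3)$; the first is purely algebraic, and the second uses the topology together with the standing hypotheses on $M$. Throughout we may assume $a\neq b$, since if $a=b$ then $[a,b]=\{a\}$ and all three assertions hold trivially.

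\smallskip
\noindent\textbf{The partial order, and $(1)\Leftrightarrow(2)$.} The key device is the relation $x\preceq y$ iff $x\in[a,y]$ on $[a,b]$. This is a partial order: reflexivity is clear, transitivity follows since $c,d\in[a,b]\Rightarrow[c,d]\subseteq[a,b]$, and antisymmetry is the triple-intersection lemma ($x\in[a,y]$ and $y\in[a,x]$ force $x\in[a,y]\cap[y,x]=\{y\}$); its least element is $a$ and its greatest is $b$. A short computation with the long distributive law shows that, for $x,z\in[a,b]$, one has $z\in[x,b]$ iff $x\preceq z$; thus, inside $[a,b]$, $[a,x]=\{z:z\preceq x\}$ and $[x,b]=\{z:x\preceq z\}$, and therefore $[a,b]$ is additive if and only if every two elements are $\preceq$-comparable, i.e.\ $\preceq$ is a \emph{total} order. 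For $(2)\Rightarrow(1)$: if $\sigma=\square(p,q,r,s)$ is a median square in $[a,b]$, then $\hull(\sigma)=[p,r]\subseteq[a,b]$ and $q,s\in[p,r]$; relabelling cyclically so that $p\preceq r$ gives $[p,r]\subseteq[a,r]$, and then distributive-law computations give $p\preceq q$ and $p\preceq s$. Since $\preceq$ is total, $\{p,q,s\}$ is a chain, and the median of a chain is its middle element, so $pqs\in\{q,s\}$; but the square relation forces $pqs=p$, so $p\in\{q,s\}$ and $\sigma$ is degenerate. Hence $\rank[a,b]\le1$. For $(1)\Rightarrow(2)$ we argue contrapositively: if $[a,b]$ is not additive, pick $x\in[a,b]$ and $w\in[a,b]\smallsetminus([a,x]\cup[x,b])$, and let $c=\rho_{ax}(w)=axw$, $d=\rho_{xb}(w)=xbw$ be the gates of $w$ onto the two subintervals. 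Being gates, $c,d\in[w,x]$; and since $w\in[a,b]$ (so $abw=w$, $abx=x$) the long distributive law gives $cdw=w$ and $cdx=x$. These four facts say exactly that $\square(c,x,d,w)$ is a median square, and since $w\neq x$, $w\neq c$ (as $c\in[a,x]\not\ni w$), $w\neq d$ (as $d\in[x,b]\not\ni w$), three of its vertices are distinct, so by Observation~\ref{DegenerateSquare} it is non-degenerate; hence $\rank[a,b]\ge2$.

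\smallskip
\noindent\textbf{$(3)\Rightarrow(2)$.} An arc is Hausdorff, locally connected and uniquely arc-connected, hence a real tree, so the median structure it inherits from $M$ is the unique topological median structure on it, namely betweenness along the arc. Identifying $[a,b]$ with $[0,1]$, the hypothesis that $a$ and $b$ are its endpoints means they correspond to $0$ and $1$; then for $x\leftrightarrow t$ the median intervals $[a,x]$ and $[x,b]$ are the subarcs $[0,t]$ and $[t,1]$, whose union is all of $[a,b]$. So $[a,b]$ is additive.

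\smallskip
\noindent\textbf{$(2)\Rightarrow(3)$.} Assume $[a,b]$ additive; by the above it has rank $\le1$ and $\preceq$ is a total order with least element $a$ and greatest element $b$. The complements of the closed median intervals $[a,x]=\{z:z\preceq x\}$ and $[x,b]=\{z:x\preceq z\}$ are the open order rays, so the given topology $\tau$ refines the order topology $\tau_{\mathrm{ord}}$; and since $([a,b],\tau)$ is connected (a retract of the connected space $M$), so is $([a,b],\tau_{\mathrm{ord}})$, whence the order is Dedekind-complete and densely ordered. For the reverse refinement, $[a,b]$ is a locally compact convex subspace of $M$, hence an ER by Corollary~\ref{LocallyClosed}, in particular locally connected; so each point $p$ of a $\tau$-open set $U$ has a connected $\tau$-open neighbourhood $V\subseteq U$, which is order-convex as a connected subset of the LOTS $([a,b],\tau_{\mathrm{ord}})$. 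A short argument---using that every proper median subinterval is connected (path-connected, as $M$ is) with at least two points, hence has no isolated point, so $V$ cannot be a $\tau$-neighbourhood of one of its own order-endpoints unless that endpoint is $a$ or $b$---then shows that $V$ contains an order-interval neighbourhood of $p$. Hence $\tau=\tau_{\mathrm{ord}}$. Finally $([a,b],\tau)$ is a separable metrizable, connected, linearly ordered space with a least and a greatest element, and any such space is classically homeomorphic to $[0,1]$, with $a,b$ going to its endpoints. So $[a,b]$ is an arc with endpoints $a$ and $b$, which closes the cycle.

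\smallskip
\noindent\textbf{Main obstacle.} The distributive-law identities above and the order-theoretic facts are routine. The genuine difficulty is the topological identification in $(2)\Rightarrow(3)$: showing that on an additive interval the ambient topology coincides with the order topology. The leverage comes from the standing hypotheses---a locally compact convex subspace of $M$ is an ER, hence locally connected---together with the fact that every proper median subinterval is connected with at least two points, which together forbid an order-convex open set from containing one of its own order-endpoints.
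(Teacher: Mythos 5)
Your proposal is correct, and while the algebraic half coincides with the paper, the topological half takes a genuinely different route. Your $(1)\Rightarrow(2)$ is essentially the paper's argument: given $y\in[a,b]\smallsetminus([a,x]\cup[x,b])$, project onto $[a,x]$ and $[x,b]$ (the paper's $u=ayx$, $v=ybx$ are your $c,d$) and verify with the distributive law that the four points form a non-degenerate median square; your identities $cdw=w$, $cdx=x$, $apq=p$ do follow from the long distributive law as claimed, and your non-degeneracy check ($w\neq c,x,d$) suffices by the classification in Observation \ref{DegenerateSquare}. The divergence is in $(2)\Rightarrow(3)$: the paper chooses an arc $\alpha$ from $a$ to $b$ (path-connected Hausdorff spaces are arc-connected), uses additivity to show every $x$ splits $[a,b]\smallsetminus\{x\}$ into the two open path-connected pieces $[a,x)$ and $(x,b]$, and concludes $\alpha=[a,b]$ because a point off $\alpha$ would separate $a$ from $b$; you instead introduce the betweenness order, prove that the subspace topology equals the order topology (closedness of intervals gives one refinement; local connectedness of the ER $[a,b]$ together with your no-isolated-point argument in $[p,b]$ gives the other), and then invoke the classical characterization of the ordered continuum (a connected, separable linearly ordered space with endpoints is homeomorphic to $[0,1]$). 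The paper's argument is shorter and self-contained, while yours imports order-topology machinery but yields a sharper structural conclusion (topology equals order topology, with an explicit order isomorphism onto $[0,1]$). Your decomposition of the equivalences also differs: you prove $(2)\Rightarrow(1)$ directly from totality of the order and $(3)\Rightarrow(2)$ via uniqueness of median structures on real trees, whereas the paper closes the cycle with the immediate $(3)\Rightarrow(1)$ from Proposition \ref{EmbeddedCube}. The only blemish is the degenerate case $a=b$, where assertion (3) is not literally true for a one-point interval; the paper shares this harmless convention.
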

 
 \begin{proof}

$1 \implies 2$: Suppose that $[a,b]$ is not additive. Thus there
   exists $x\in [a,b]$ such that $[a,b]\not= [a,x]\cup[x,b]$. Choose
   $y\in [a,b]\setminus([a,x]\cup[x,b])$. Now we project $y$ onto each
   of the intervals: $u=ayx$ and $v=ybx$. We will
   show that $\{x,u,y,v\}$ is a median square, contradicting that
   $[a,b]$ is of rank 1.

     We first show that $\{x,u,y,v\}$ satisfy the conditions of a
     (possibly degenerate) median square.  

     \noindent\underline{$uxv=x$}: $uxv=(axy)x(bxy)=xy(abx)=xyx=x$
     
     \noindent\underline{$yux=u$}: $yux=xy(axy)=axy=u$

    \noindent\underline{$yvx=v$}: $yvx=xy(bxy)=bxy=v$
     
     \noindent\underline{$uyv=y$}: $uyv=(axy)y(bxy)=xy(aby)=xyy=y$
     
     Now we show the points $\{x,u,y,v\}$ are distinct. By Observation \ref{DegenerateSquare}, the possibilities are that all the vertices are equal, or that it degenerates so that $x=u$ and $y=v$, or $x=v$ and $u=y$. Thus it suffices to show that $x\not=u$ and $x\not=v$. 
     
     \noindent\underline{$x\not=u$}: If $x=u$, then $y=v$,  so that $y\in [x,b]$, contradicting our assumption.
    
     \noindent\underline{$x\not=v$}: If $x=v$, then $y=u$ so that $y\in [a,x]$, contradicting our assumption
  
$2\implies 3$: Since $[a,b]$ is Hausdorff and path-connected, there exists an arc (embedded path) $\alpha$ joining $a$ and $b$. We will argue that $\alpha=[a,b]$. 

For $x\in [a,b]$, from additivity, we have that $[a,b]=[a,x]\cup[x,b]$. Let $[a,x)=[a,x]-\{x\}$ and $(x,b]=[x,b]\setminus \{x\}$.  We claim that $[a,x)$ and $(x,b]$ are the path components of $[a,b]-\{x\}$. First note for any $y\in [a,x]$, with $y\not=x$, we have $[a,y]\subset [a,x)$. This is because we already have $[a,y]\subset [a,x]$, so in order for $[a,y]\not\subset [a,x)$, we need to have $x\in [a,y]$. But then by the definition of intervals, $x=axy=y$. 

Secondly, note that for any $y\in [a,x)$, the interval $[a,y]$ is path-connected (as already noted, it is a retract of $M$). Thus, $[a,x)$ is path-connected and similarly $(x,b]$ is path-connected. 
Note that since $[x,b]$ is closed in $[a,b]$, $[a,x)$ is open in $[a,b]$ and hence in $[a,b]\setminus \{x\}$

Similarly $(x,b]$ is open in $[a,b]\setminus
\{x\}$. We thus have that $[a,b]\setminus \{x\}$ is a union of two
path-connected disjoint open sets $[a,x)$ and $(x,b]$, which are
consequently the two path components of $ [a,b]\setminus
\{x\}$.

Now suppose that $x\in [a,b]\setminus \alpha$. The path $\alpha$ is then a path in $[a,b]\setminus \{x\}$ joining $a$ and $b$, a contradiction to the above. 

$3\implies 1$: This is trivial in light of Proposition \ref{EmbeddedCube}.
 \end{proof}

 \subsection{Embedding cubes}

By a \emph{median embedding} of a median algebra $A$ into $M$, we mean
an injective map preserving medians. We will see in this section that
under mild assumptions on the ambient topological median algebra,
median cubes give rise to topological embeddings of real cubes
(products of standard closed intervals in $\R$). A version of the following proposition can be found in \cite{Bowditch24} under slightly different assumptions. If $f:\{0,1\}^n\to M$ is an embedding of a median cube, then we say that $f$ extends to a \emph{face-respecting map}\msr{maybe it should be ``face-saving"} $\hat f:[0,1]^n\to M$ if there exists such a map and for every face $\sigma$ of $[0,1]^n$, $\hat f(\sigma)\subset\hull(f(\sigma^0))$ (where $\sigma^0$ is the 0-skeleton of $\sigma$).

\begin{prop}[Embedded cube]
Suppose that $f:\{0,1\}^n\to M$ is an embedding of a median cube in $M$.  Let $C=\im(f)$. 

\begin{enumerate}
\item The map $f$ extends to a continuous, face respecting  embedding $\hat f:[0,1]^n\to \hull(C)$.
\item Suppose further that the corresponding edges of the cube in $M$ are additive: for each $i=1,\ldots k$, the interval $$[f(0,\ldots,0), f(0,\ldots,0,\stackrel{i}{1},0,\ldots0)]$$ is additive.  
 Then the face-respecting embedding of $f:[0,1]^n\to M$
 extending $f$  is a 
 median embedding. This embedding is unique up to pre-composition with a median homeomorphism of $[0,1]^n$ preserving $\{0,1\}^n$ pointwise. 
\end{enumerate}
\label{EmbeddedCube}
\end{prop}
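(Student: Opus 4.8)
The plan is to build $\hat f$ coordinate by coordinate, using the product structure from Corollary \ref{BrombergNdim} to reduce everything to the one-dimensional case. Recall $\hull(C)=[a,b]\cong\prod_{i=1}^n [a,a_i]$ where $a=f(\bar 0)$, $b=f(\bar 1)$, $a_i=f(e_i)$. For part (1), I first treat $n=1$: given an edge interval $[a,a_i]$, I need a continuous embedding $[0,1]\to[a,a_i]$ sending $0\mapsto a$, $1\mapsto a_i$. Since $[a,a_i]$ is an ER (it is a locally compact convex subset of $M$ by Corollary \ref{LocallyClosed}, hence path-connected), there is an embedded arc from $a$ to $a_i$ inside it; parametrize it as $\gamma_i:[0,1]\to[a,a_i]$. (No additivity is claimed in part (1), so the arc need not be all of $[a,a_i]$ — we only need \emph{some} continuous embedding.) Then set $\hat f = \prod_i\gamma_i$ under the isomorphism $\hull(C)\cong\prod[a,a_i]$; this is continuous (the isomorphism of Corollary \ref{BrombergNdim} is built from gate maps and medians, hence continuous in a topological median algebra) and injective, and it is face-respecting because a face $\sigma$ of $[0,1]^n$ is a subproduct $\prod_{i\in S}[0,1]\times\prod_{i\notin S}\{\epsilon_i\}$, and $\hat f(\sigma)$ lands in $\prod_{i\in S}[a,a_i]\times\prod_{i\notin S}\{$corresponding vertex$\}$, which is exactly $\hull(f(\sigma^0))$ by another application of Corollary \ref{BrombergNdim}.

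For part (2), assume each edge interval $[a,a_i]$ is additive. By Lemma \ref{Additive}, additivity means $[a,a_i]$ is an arc with endpoints $a,a_i$; hence the arc $\gamma_i$ above is forced to be \emph{all} of $[a,a_i]$, and $\gamma_i:[0,1]\to[a,a_i]$ is a homeomorphism. The one-dimensional case then says: the unique median structure on an arc is the order structure (this is the Proposition on median structures on $\R$, or rather its statement for arcs — an arc is a real tree, so it carries a unique topological median structure, namely betweenness along the arc). Concretely, choosing the parametrization $\gamma_i$ so that it is a median isomorphism $[0,1]\to[a,a_i]$ is possible and is unique up to pre-composition with an order-preserving (hence median-preserving) self-homeomorphism of $[0,1]$ fixing $\{0,1\}$. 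Taking the product $\hat f=\prod_i\gamma_i$ and using that a product of median homomorphisms is a median homomorphism (medians in products are computed coordinatewise), $\hat f:[0,1]^n\to\hull(C)$ is a median isomorphism onto $\hull(C)$, in particular a median embedding into $M$.

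For the uniqueness clause in part (2): suppose $\hat g:[0,1]^n\to M$ is another face-respecting median embedding extending $f$. Restricting $\hat g$ to the $i$-th edge $\{0\}\times\cdots\times[0,1]\times\cdots\times\{0\}$, the face-respecting condition forces $\hat g$ to map this edge into $\hull(f(\{\bar 0,e_i\}))=[a,a_i]$, and being a median embedding of $[0,1]$ into the arc $[a,a_i]$ it must be a homeomorphism onto $[a,a_i]$ realizing the order structure; so $\hat g|_{\text{$i$-th edge}}=\gamma_i\circ h_i$ for an order-preserving self-homeomorphism $h_i$ of $[0,1]$ fixing endpoints. Let $h=\prod_i h_i$, a median self-homeomorphism of $[0,1]^n$ fixing $\{0,1\}^n$ pointwise. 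Then $\hat g$ and $\hat f\circ h$ are two median homomorphisms $[0,1]^n\to M$ agreeing on all the edges; since $[0,1]^n=\hull(\{0,1\}^n)$ and, more to the point, $[0,1]^n$ is generated as a median algebra by repeatedly taking medians starting from the edges (equivalently, $\hat f\circ h$ and $\hat g$ agree on each $[0,1]$-factor and both respect the product decomposition, so they agree everywhere), we conclude $\hat g=\hat f\circ h$.

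\textbf{Main obstacle.} The delicate point is the passage from "continuous embedding of $[0,1]^n$" to "\emph{median} embedding" in part (2): one must be sure that the product parametrization $\prod\gamma_i$, where each $\gamma_i$ is chosen to be a median isomorphism onto the arc $[a,a_i]$, is genuinely compatible with the product median structure on $\hull(C)$ coming from Corollary \ref{BrombergNdim} — i.e. that the isomorphism $\hull(C)\cong\prod[a,a_i]$ of that corollary, assembled from gate maps, is a median isomorphism (not merely a bijection), and that it restricts to the identity on vertices so that $\prod\gamma_i$ really does extend $f$. This is where one needs the "long distributive law implies the gate map is a median homomorphism" remark from the preliminaries, applied inductively as in the proof of Corollary \ref{BrombergNdim}. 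The continuity of that isomorphism (needed already for part (1)) is immediate from continuity of the median operation, but checking it is a median \emph{homomorphism} in both directions is the substantive bookkeeping. A secondary, purely point-set obstacle is confirming that an arc carries a unique topological median structure; this follows from the real-tree remark in the excerpt, but one should note that an arc is indeed a real tree in the sense used there (Hausdorff, locally connected, uniquely arcwise connected).
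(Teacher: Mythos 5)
Your construction follows the paper's proof almost exactly: arcs in the edge intervals, the product identification $\hull(C)\cong\prod_i[a,a_i]$ from Corollary \ref{BrombergNdim}, and in part (2) the observation that additivity makes each $[a,a_i]$ an arc carrying a unique median (betweenness) structure, so the coordinatewise map is a median isomorphism. (The paper gets the arc more cheaply, by pushing a path in $M$ into $[a,b]$ with the gate retraction -- its remark stresses that only arc-connectedness of $M$ is used -- whereas your appeal to Corollary \ref{LocallyClosed} invokes local compactness; this is harmless under the standing assumptions.)

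One concrete correction in your uniqueness paragraph: the claim that $[0,1]^n$ is generated as a median algebra by the edges at the base vertex is false. Already for $n=2$ the union of the two edges $[0,1]\times\{0\}\cup\{0\}\times[0,1]$ is itself a median subalgebra (coordinatewise medians of points on the two axes stay on the axes), so agreement on those edges does not by itself force agreement everywhere. The correct route is the one you mention parenthetically, but it needs an argument rather than an assertion: any face-respecting median embedding $\hat g$ lands in $\hull(C)\cong\prod_i[a,a_i]$, and each coordinate of $h^{-1}\circ\hat g$ is a median homomorphism from $[0,1]^n$ to an arc; since halfspaces of a product are pulled back from the factors, a median homomorphism onto a chain factors through a single coordinate, so $\hat g$ does respect the product decomposition and is determined by its edge restrictions. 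The paper itself asserts the uniqueness in one sentence without this bookkeeping, so your treatment is at the same level of rigor as the original once the false ``generated by the edges'' remark is replaced by the factoring argument.
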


\begin{proof} For the first item, recall that a  path-connected, Hausdorff space is arc-connected. 
For $n=1$, the claim reduces to saying that each interval is arc-connected.  Given any two points $a,b\in M$,  choose any path between $a$ and $b$ in $M$, and apply the gate retraction $\rho_{ab}$ to this path. We can then replace the path by an arc joining $a$ and $b$, since $M$ and hence $[a,b]$ is Hausdorff, and get the required embedding $\hat f:[0,1]\to [a,b]$.

Now note that $C=f(C)$ is a median cube, so that by Corollary \ref{BrombergNdim}, we have that a natural isomorphism $h:\prod_i[a,e_i]\to \hull(C)$, where $e_i=f(0,\ldots,0,\stackrel{i}{1},0,\ldots,0)$. For each $i$, we use the above to obtain an embedding  $g_i:[0,1]\to [a,e_i]$ with $g_i(0)=a$ and $g_i(1)=e_i$.  We now define $g: [0,1]^n\to \prod_i[a,e_i]$ component-wise: $g(t_1,\ldots,t_n)=(g_1(t_1),\ldots,g_n(t_n))$, which is an embedding since each $g_i$ is an embedding. We then define $\hat f=h\circ g$. Since $g$ is defined component-wise, $\hat f$ is face respecting.

For the second item,  note that under the assumpion that that $[a,e_i]$ is an arc,   the  embedding  $g_i:[0,1]\to [a,e_i]$ with $g_i(0)=a$ and $g_i(1)=e_i$ and is clearly median preserving. Note that it is unique up to pre-composition by an order preserving homeomorphism of $[0,1]$ preserving $\{0,1\}$. The embedding of the cube $\hat f=h\circ g$ is then a median-preserving embedding, which is unique up pre-composition by  a median preserving homeomorphism of $[0,1]^n$
preserving $\{0,1\}^n$, as required.  
\end{proof}

\begin{remark} While we are assuming all the standard assumptions for $M$ listed at the beginning of our section, the only assumption we are really using in the proof is that $M$ is arc-connected. 
\end{remark}

 When there is a median embedding $f:[0,1]^k\to M$ where $$[f(0,\ldots,0), f(0,\ldots,0,\stackrel{i}{1},0,\ldots0)]$$ are all additive (as in Item 2 of Lemma \ref{EmbeddedCube}), we call $Im(f)$ a \emph{cube} in $M$. 
 
 We will need that two such intersect nicely.
 
 \begin{lemma}
 The intersection of two cubes of $M$ is a cube.
 \label{CubeIntersection}
 \end{lemma}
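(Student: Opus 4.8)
The plan is to reduce the statement to facts already established: Lemma \ref{IntervalIntersection} (intersection of two intervals is an interval), Proposition \ref{EmbeddedCube} (a median cube with additive edges spans a cube, i.e.\ a face-respecting median embedding of $[0,1]^k$), and Corollary \ref{BrombergNdim} (a cube is the hull of its spanning edges and splits as a product of those edge-intervals). Let $P=\im(f)$ and $Q=\im(g)$ be two cubes in $M$, of dimensions $k$ and $\ell$. First I would observe that $P$ and $Q$ are each equal to an interval: by Corollary \ref{CubeIsAnInterval} and the definition of a cube, $P=[a,a']$ where $a,a'$ are diagonally opposite vertices, and $Q=[c,c']$. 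Hence by Lemma \ref{IntervalIntersection}, $P\cap Q$ is either empty or an interval $[\hat c,\hat d]$, where $\hat c=aa'c$ and $\hat d=aa'c'$ are the gate projections of $c,c'$ into $P$. The empty case is fine (we may regard it as degenerate, or note that disjoint cubes have empty intersection which is vacuously a cube), so assume $P\cap Q=[\hat c,\hat d]\neq\emptyset$.

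The core of the argument is to show $[\hat c,\hat d]$ is a cube, i.e.\ that it has finite rank $m$, that it is the hull of $m$ pairwise-square-spanning additive edges, and hence by Lemma \ref{Flag} and Proposition \ref{EmbeddedCube} carries a face-respecting median embedding of $[0,1]^m$. The key point is that $P\cap Q$ is a convex subset of the cube $P$. Under the product isomorphism $P=[a,a']\cong\prod_{i=1}^k[a,a_i]$ of Corollary \ref{BrombergNdim}, where each factor $[a,a_i]$ is an arc (Lemma \ref{Additive}), I would argue that a convex subinterval $[\hat c,\hat d]$ of a product of arcs is itself a subproduct of subarcs. Concretely: the gate map $\rho=\rho_{aa'}:M\to P$ is a median homomorphism, so in the product coordinates $\rho(c)=\hat c$ and $\rho(c')=\hat d$ have coordinates $(\rho_i(c))_i$ and $(\rho_i(c'))_i$; then $[\hat c,\hat d]$ is the product $\prod_i [\hat c_i,\hat d_i]$ of the corresponding coordinate subarcs (using that in a product of arcs the interval between two points is the product of the coordinate intervals — this is exactly the $n=1$ and product structure of Corollary \ref{BrombergNdim}). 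Each coordinate factor $[\hat c_i,\hat d_i]$ is a subarc of the arc $[a,a_i]$, hence is itself an arc, hence additive by Lemma \ref{Additive}. Let $m$ be the number of indices $i$ for which $\hat c_i\neq\hat d_i$ (the non-degenerate factors); then $[\hat c,\hat d]\cong\prod$ of these $m$ arcs, its spanning edges are these $m$ arcs (additive by the above), any two of them span a square inside $P\cap Q$ (being the hull of a $2$-cube living in a product), and so by Proposition \ref{EmbeddedCube}(2) the $m$ spanning edges give a face-respecting median embedding $[0,1]^m\hookrightarrow P\cap Q$ whose image is all of $[\hat c,\hat d]$. That exhibits $P\cap Q$ as a cube.

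The main obstacle I anticipate is the bookkeeping in the previous paragraph: justifying cleanly that a convex subinterval of $\prod_i[a,a_i]$ (a product of arcs) is again a product of subarcs, rather than some more complicated convex set. The essential input is that a convex subset of an interval, when pulled back under a product projection, behaves well — but here it is simplest to argue coordinatewise: the projection of $P\cap Q=[\hat c,\hat d]$ to the $i$-th factor is the arc-interval $[\hat c_i,\hat d_i]$ (since gate maps commute with the product projections, which are themselves gate maps to sub-hulls), and conversely $[\hat c,\hat d]$ contains the full product $\prod_i[\hat c_i,\hat d_i]$ by convexity (each vertex of that product lies in an interval between $\hat c$ and $\hat d$) while being contained in it since $[\hat c,\hat d]\subset P=\prod_i[a,a_i]$ forces each coordinate of a point of $[\hat c,\hat d]$ to lie in $[\hat c_i,\hat d_i]$ (because the $i$-th coordinate of the median $\hat c\,\hat d\,x$ is $\hat c_i\,\hat d_i\,x_i$, and an arc is additive so $x_i\in[\hat c_i,\hat d_i]\cup$ its complement forces $x_i\in[\hat c_i,\hat d_i]$ when $x\in[\hat c,\hat d]$). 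Once this product decomposition is in hand, everything else is a direct appeal to Proposition \ref{EmbeddedCube} and Lemma \ref{Flag}, with finite rank of $P\cap Q$ automatic since $P\cap Q\subseteq P$ has rank at most $k$.
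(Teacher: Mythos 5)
Your proposal is correct and follows essentially the same route as the paper: identify each cube with an interval via Corollary \ref{CubeIsAnInterval}, apply Lemma \ref{IntervalIntersection} to get that the intersection is an interval, and then use the product structure of the cube (Corollary \ref{BrombergNdim}) to see that an interval inside a cube is the product of coordinate subarcs, hence a subcube. The only difference is that you spell out coordinatewise the step the paper asserts directly (that in $[0,1]^k$ the interval $[x,y]$ is the hull of the median cube $\prod_i\{x_i,y_i\}$), which is a harmless elaboration.
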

\begin{proof} Note that if  $x,y\in[0,1]^n$, with $x=(x_1,\ldots,x_n)$ and $y=(y_1,\ldots,y_n)$, the interval $[x,y]$ is itself a subcube, namely the convex hull of the median cube $\prod_i {x_i,y_i}$. 

Now if $\sigma$ and $\tau$ are two cubes in $M$, then by Lemma \ref{CubeIsAnInterval} there are $a,b,c,d\in M$ such that $\sigma=[a,b]$ and $\tau=[c,d]$. Then by Lemma \ref{IntervalIntersection}, their intersection is an interval $\sigma\cap\tau=[x,y]$. Note that since we have a median isomorphism of $\sigma$ with $[0,1]^k$, so we have $x,y\in[0,1]^k$, with $x=(x_1,\ldots,x_k)$ and $y=(y_1,\ldots,y_k)$, the interval $[x,y]$ is itself a subcube, namely the convex hull of the median cube $\prod_i {x_i,y_i}$. 
 \end{proof}

We then obtain the following, which we shall make use of later on. 

\begin{cor} For a median structure on any retract of $\R^n$ we have

\begin{itemize}
\item The rank is at most $n$. 
\item Every point has a base of compact, convex neighborhoods.
\end{itemize}
\label{RankAndConvexityEuclidean}
\end{cor}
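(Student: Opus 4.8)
The plan is to derive both bullets quickly from the machinery already assembled: the first part of Proposition~\ref{EmbeddedCube} turns a hypothetical large median cube into a genuine topologically embedded Euclidean cube, and elementary dimension theory then forbids such cubes; once finite rank is known, Theorem~\ref{LocallyConvex} delivers the convex neighborhood base for free.

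\textbf{Rank bound.} Suppose towards a contradiction that $rk(M)\geq n+1$. Then there is a median embedding $f:\{0,1\}^{n+1}\hookrightarrow M$, and by the first part of Proposition~\ref{EmbeddedCube} this extends to a (continuous, face-respecting) topological embedding $\hat f:[0,1]^{n+1}\hookrightarrow \hull(\im f)\subset M$. Thus $M$ contains a topological copy of the cube $[0,1]^{n+1}$. But $M$ is (homeomorphic to) a subspace of $\R^n$, hence has covering dimension at most $n$, whereas $\dim[0,1]^{n+1}=n+1$; since covering dimension is monotone under passing to subspaces of separable metric spaces, this is impossible. Hence $rk(M)\leq n$; the same argument rules out infinite rank.

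\textbf{Neighborhood base.} A retract of the Hausdorff space $\R^n$ is closed in $\R^n$ (it is the equalizer of the retraction and the identity), hence locally compact. Moreover, since $\R^n$ is path-connected and $M$ is a retract of it, $M$ is path-connected, in particular connected. Together with $rk(M)\leq n<\infty$ from the first part, all hypotheses of Theorem~\ref{LocallyConvex} hold, and that theorem gives precisely the second bullet: every point of $M$ has arbitrarily small compact convex neighborhoods.

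\textbf{Where care is needed.} There is no genuine obstacle here; the one point to check carefully is that Proposition~\ref{EmbeddedCube}(1) and Theorem~\ref{LocallyConvex} are legitimately applicable. For the former, its proof uses only arc-connectedness of $M$ (cf.\ the remark following it), and $M$ is arc-connected since it is path-connected, metrizable and Hausdorff; for the latter one needs $M$ connected, locally compact, and finite rank, all established above. Equivalently, one observes once and for all that a closed, path-connected subset of $\R^n$ satisfies the standing assumptions of this section. The substantive inputs — that a median cube spans a topologically embedded Euclidean cube, and that a subspace of $\R^n$ has dimension at most $n$ — are already available.
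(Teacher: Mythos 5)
Your proof is correct and follows the paper's own route: the rank bound comes from Proposition \ref{EmbeddedCube} (a median $(n+1)$-cube would yield a topologically embedded $[0,1]^{n+1}$, impossible in a subspace of $\R^n$ by dimension theory), and the neighborhood base comes from finite rank, local compactness, and Theorem \ref{LocallyConvex}. You simply spell out the dimension-theoretic step and the verification of hypotheses that the paper leaves implicit.
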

    
 \begin{proof}
 The first item follows from Proposition \ref{EmbeddedCube} and the second item follows from finite rank, Proposition \ref{LocallyConvex} and local compactness. 
 \end{proof}

\subsection{Median structures on CAT(0) cube complexes}

One way to get median structures on $ER$ homology manifolds is via CAT(0) cubulations. The traditional connection between CAT(0) cube complexes and median algebras is that the vertex set of a CAT(0) cube complex is naturally a discrete median algebra (see e.g. \cite{Roller98} \cite{Nica04} \cite{ChatterjiNiblo04} \cite{Sageev14}). 
We recall here this connection and then show how the median structure on the vertices actually extends to a median structure on all of the CAT(0) cube complex. 

Let $X$ be a finite dimensional CAT(0) cube complex.
 Let  $\hat\cH$ denote the collection of hyperplanes of $X$, each of which separates into two halfspaces. The collection of halfspaces, we denote as $\cH$. Recall that the 0-skeleton $X^0$ of $X$ is in one-to-one correspondence with the collection of  $\cal U_\cH$ of DCC (descending chain condition) ultrafilters on $X$
$$ X^0 \to \cU_\cH$$
$$v\mapsto \alpha_v$$

Here the ultrafilter $\alpha_v$ is simply the collection of halfspaces which contain $v$. We have a median structure on $\cU_\cH$ defined in the usual way:

$$\alpha_u\alpha_v\alpha_w=(\alpha_u\cap\alpha_v)\cup(\alpha_v\cap\alpha_w)\cup(\alpha_w\cap\alpha_u)$$
 
 And this in turn defines a median structure on $X^0$.

 We now want to extend this median structure to all of $X$. We can do this in the following way. 
 
Leaves are generalizations of hyperplanes and can be either regular or singular.  We define a \emph{regular leaf} in $X$ as a subset of $X$ which is parallel to copy of a hyperplane, running through the interior of the same cubes. More precisely, if $\sigma$ is an $n$-cube in $X$, a \emph{midleaf} of $\sigma$ is an $n-1$ cube parallel to one of the faces of $\sigma$, intersecting the interior of $\sigma$, but not necessarily going through the barycenter of $\sigma$.

As with hyperplanes, a regular leaf is a maximal extension of a midleaf.  Note that since regular leaves are parallel to hyperplanes, they satisfy the same properties. Namely, a regular leaf is embedded, and separates $X$ into two components. The closure of one of these components is called a \emph{halfspace}. A \emph{singular} leaf is a limit of regular leaves. Thus, it may contain a vertex or indeed a cell of $\sigma$ (see Figure \ref{Leaves}). 
 
\begin{figure}[h]
\begin{center}
\includegraphics[scale=.7]{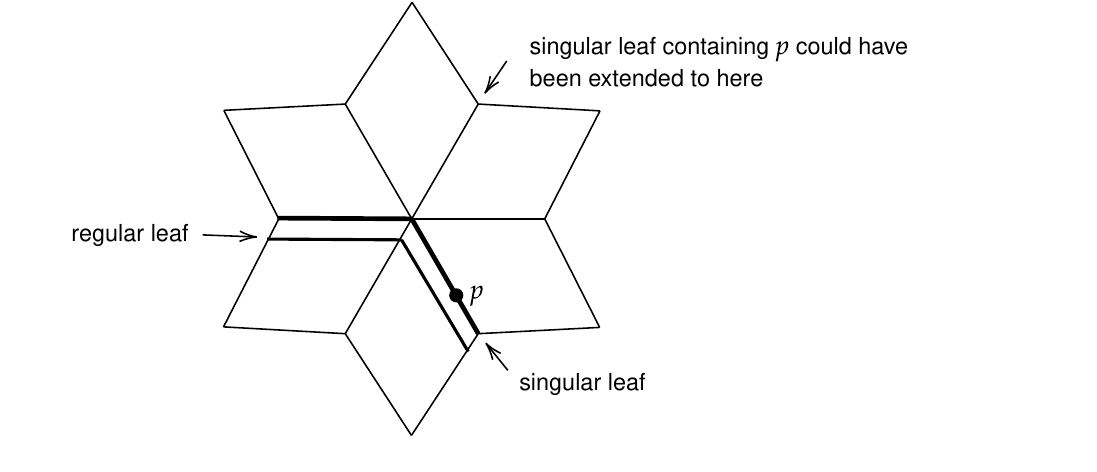}
\end{center}
\caption{Regular leaves and singular leaves.}
\label{Leaves}
\end{figure}

 We let $\hat\L$ denote the collection of all leaves, and $\L$ the collection of all halfspaces. Now we note that given 
 any point in $X$, we get an \emph{almost ultrafilter} on $\L$. Namely, if $x\in X$, we can define
 
 $$\alpha_x =\{\h\in\L\vert x\in\h\}$$
 
 Note that this consistently choses a side of every leaf, except for the finitely many leaves that contain $x$, for which both sides are chosen. Also, note that it satisfies an analogue of the DCC condition. Namely, any descending sequence of halfspace $\h_1\supset \h_2,...$ in $\alpha_x$ satisfies $\bigcap h_i\not=\emptyset$. 
 
 Conversely, if $\alpha$ is an almost ultrafilter satisfying the DCC condition, then there exists $x$ such that $\alpha=\alpha_x$. 
 
To see this, note that  $\alpha$ is a poset of subsets of $X$, partially ordered by inclusion. Since $\alpha$ satisfies DCC, every descending chain has a non-empty intersection. Also, note that when $\h_1\supset\h_2\supset...$ is a such an infinite chain, the intersection of all of them is a half space, so that every descending chain has a lower bound. Thus, by Zorn's lemma, there exists a minimal element $\h_0$ in $\alpha$, bounded by some leaf $\ell_0$. For all leaves disjoint from $\ell_0$, we then have that the co-orientation chooses the halfspace of $\ell$ containing $\h_0$. Now we apply induction. Since $\ell_0$ is a CAT(0) cube complex of one lower dimension, and its leaves are the leaves of $X$  intersecting $\ell_0$. Thus, viewed as an ultrafilter on the halfspaces of $\ell_o$, $\alpha=\alpha_x$ for some $x\in\ell_0$. We thus get that $\alpha=\alpha_x$ also as an ultrafilter on the halfspaces of $X$.

\section{Topological median algebra structures on
 ER homology manifolds}
    \label{MedER}
    Here we see how the above is applied to median structures on $\Rbb^n$ and its homological siblings. 
    We start with one of the technical statements that shows why we need to delve into ER homology manifolds in the first place. For this section, we let $M$ be an ER homology $n$-manifold with a median structure. 
 
   \begin{prop}
 Let $[a,b]$ be an interval and let $\rho: M\to [a,b]$ be the gate projection map. Suppose that $(a,b]$ is convex. Let $$F=\overline{\rho^{-1}((a,b])}\setminus \rho^{-1}((a,b])$$ Then $F$ is a closed convex ER homology  $(n-1)$-manifold. 
   \label{Fset}
   \end{prop}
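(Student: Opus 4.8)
The plan is to identify $F$ as the boundary (in the homology-manifold sense) of the closed set $N = \overline{\rho^{-1}((a,b])}$, and then to invoke the $Z$-set machinery of Section~\ref{Fancy}. First I would argue that $N$ is itself an ER homology $n$-manifold with boundary. The set $\rho^{-1}((a,b])$ is convex by Lemma~\ref{ConvexPreimages} (the hypothesis is that $(a,b]$ is convex in $[a,b]$), hence so is its closure $N$ (closures of convex sets are convex in this setting). By Corollary~\ref{LocallyClosed}, a locally compact convex subset of $M$ is an ER; and since $N$ is convex and closed in the homology $n$-manifold $M$, one checks using Invariance of Domain (Theorem~\ref{InvarianceOfDomain}) that $N$ is open where it is full-dimensional, so $N$ is a homology $n$-manifold with boundary, with $\partial N$ consisting exactly of those points where the local homology vanishes.

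Next I would show $F = \partial N$. A point $x \in \rho^{-1}((a,b])$ lies in the \emph{interior} of $N$: since $\rho$ is continuous and $(a,b]$ is open in $[a,b]$ (as $[a,b]$ is an arc only in rank $1$ — in general $(a,b]$ is open in $[a,b]$ precisely because $\{b\}$ is... — more carefully, we use that the complement of $\rho^{-1}((a,b])$ avoids a neighborhood of $x$, which follows from $\rho$ continuous and the fact that $x$ has a convex neighborhood inside $\rho^{-1}((a,b])$ by local convexity, Theorem~\ref{LocallyConvex}), so $x$ has a full-dimensional neighborhood. Conversely a point of $F$ has every neighborhood meeting the complement of $N$, so it cannot be interior; thus $F = \overline{\rho^{-1}((a,b])} \setminus \rho^{-1}((a,b]) = N \setminus \mathring N = \partial N$. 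By Mitchell's theorem (Theorem~\ref{mitchell}), $\partial N$ is a homology $(n-1)$-manifold with empty boundary, so $F$ is a homology $(n-1)$-manifold.

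It remains to see that $F$ is closed, convex, and an ER. Closedness is immediate from the definition as a difference of a closed set and a set whose complement-within-$N$ we have identified; convexity I would deduce by writing $F$ as an intersection of convex sets, or by a direct median computation: if $x, y \in F$ and $z \in [x,y]$, then $z \in N$ by convexity of $N$, and $z \notin \rho^{-1}((a,b])$ because otherwise $z$ would be interior and one can push $x$ or $y$ off $F$ along the interval — alternatively, use that $\rho^{-1}((a,b])$ is convex and open in $N$, so its complement in $N$, which is $F$, is... this needs care, as complements of convex open sets need not be convex. The cleanest route is to show $F = \rho^{-1}(a) \cap N$ or more precisely to relate $F$ to the gate-preimage of the endpoint $a$; I expect $F$ to be the set of $x \in N$ with $\rho(x) = a$ that are limits of points projecting into $(a,b]$, and convexity of this should follow from Lemma~\ref{ConvexPreimages} applied to the convex set $\{a\} \subset [a,b]$ together with the description of $N$. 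Finally, once $F$ is known to be closed, convex, and locally compact, Corollary~\ref{LocallyClosed} gives that it is an ER, completing the proof.

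\textbf{Main obstacle.} The delicate point is the convexity of $F$: $F$ is the frontier of the convex set $\rho^{-1}((a,b])$ relative to $N$, and frontiers of convex sets are not convex in general. The argument must exploit the specific structure — that $\rho^{-1}((a,b])$ is the gate-preimage of a half-open interval with $(a,b]$ convex — presumably by showing $F$ coincides with $\rho^{-1}(\{a\}) \cap \overline{\rho^{-1}((a,b])}$ and that this latter set is convex because $\{a\}$ is convex in $[a,b]$ and passing to the relevant closure preserves convexity. Establishing this identification, and hence convexity, is where the real work lies; everything else is an assembly of Corollary~\ref{LocallyClosed}, Invariance of Domain, and Mitchell's theorem.
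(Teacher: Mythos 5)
Your overall skeleton matches the paper's: convexity of $F$, then Mitchell's theorem via the $Z$-set machinery to drop a dimension, and Corollary \ref{LocallyClosed} for the ER statement. The convexity step, which you flag as the main obstacle, is in fact the easy part and is resolved exactly as you guess: since $M = \rho^{-1}(a)\sqcup\rho^{-1}((a,b])$, one has $F=\rho^{-1}(a)\cap\overline{U}$ where $U=\rho^{-1}((a,b])$, and this is an intersection of two convex sets ($\rho^{-1}(a)$ by Lemma \ref{ConvexPreimages} applied to $\{a\}$, and $\overline U$ as the closure of the convex set $U$). So that identification needs no delicate argument.

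The genuine gap is in the step you treat as routine: the claim that $N=\overline U$ is a homology $n$-manifold with boundary and that $F=\partial N$. Convexity of $N$ plus Invariance of Domain only tells you about points of $U$ (which are open in $M$, since $U=M\setminus\rho^{-1}(a)$, hence honest homology-manifold points); it says nothing about the local homology $H_i(\overline U,\overline U\setminus x)$ at points $x\in F$, and ``frontier point'' is not the same as ``homology boundary point'' without further argument. The paper supplies exactly this missing input by proving that $F$ is a $Z$-set in $\overline U$: using Corollary \ref{RankAndConvexityEuclidean} one finds arbitrarily small compact convex neighborhoods $N'$ of any $x\in F$, and $N'\setminus F=N'\cap U$ is convex and locally compact, hence contractible by Corollary \ref{LocallyClosed}, so Proposition \ref{LocalConditionZset} applies; then Proposition \ref{Zset} (whose proof pushes relative cycles off $F$ via Lemma \ref{perturb}) shows the local homology vanishes at every point of $F$, so $\overline U$ is a homology $n$-manifold with boundary $F$, and Mitchell's theorem finishes. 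You name the $Z$-set machinery in your opening sentence but never verify the $Z$-set condition, and the substitute argument you sketch does not establish the vanishing of local homology at points of $F$; supplying that verification is the real content of the proof.
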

   
   \begin{proof}
   Consider the following sets:
   
   $$U=\rho^{-1}((a,b]), \quad \overline U=\text{the closure of } U,\quad F=\o{U}-U$$
   
   We claim that all three of these sets are convex. The set $U$ is convex since $(a,b]$ is convex and the preimage under $\rho$ of a convex set is convex. It follows that $\o U$ is convex as it is the closure of a convex set. Finally, observe that $\{a\}$ is convex, so that $\rho^{-1}(a)$ is convex. Thus $F=\rho^{-1}(a)\cap \o U$ is the intersection of two convex sets, and so is also convex.

   How we argue that $F$ is a Z-set of $\o U$. By Proposition
   \ref{LocalConditionZset}, it suffices to show that for every open
   neighborhood $V$ of any $x\in F$, there exists a smaller
   neighborhood $W\subset V$ of $x$ such that $W\smallsetminus
   F\hookrightarrow V\smallsetminus F$ is nullhomotopic.
   
   Given $V$ as above, by Corollary \ref{RankAndConvexityEuclidean} we can find a
   convex neighborhood $N$ of $x$, such that $N\subset V$. Setting
   $W=\mathring N$ to be the interior of $N$, we have 
$$W\smallsetminus F\hookrightarrow N\smallsetminus F\hookrightarrow
   V\smallsetminus F.$$
   Since $N\smallsetminus F=N\cap U$ is convex and locally closed, it
   follows from Corollary \ref{LocallyClosed} that it is contractible,
   implying that $W\smallsetminus F\hookrightarrow V\smallsetminus F$
   is nullhomotopic as required.

   Now we apply Proposition \ref{Zset} to the sets $F\subset \o U$. Since
   $\o U$ has a basis of convex sets, it follows that $\o U$ is locally
   contractible and hence an ENR. Since it is also convex it is an ER by Corollary \ref{LocallyClosed}.

   Observe that $U=\o U\setminus F$ is an open set in $M$ and so by Lemma \ref{InvarianceOfDomain} is an  ER homology $n$-manifold. It then follows by Proposition \ref{Zset} that $\o U$ is a homology $n$-manifold with boundary and $\partial \o U=F$ is a homology $n-1$-manifold. 
 \end{proof}

  When $(a,b]$ is convex as in the proposition, we call the set $F$
    defined above \emph{the orthogonal manifold to $[a,b]$ at $a$}.

The following lemma justifies the term ``orthogonal".

\begin{lemma}
Let $(a,b]$ be a convex half-open interval in $M$. Let $F$ be the orthogonal manifold to $[a,b]$ at $a$. Let $c\in F\setminus \{a\}$ and let $V$ be a neighborhood of $c$. Then there exist $b'\in (a,b]$ and $c'\in (a,c]\cap V$ such that $a,b',c'$ are three vertices of a median square.  
\label{NearbySquare}
\end{lemma}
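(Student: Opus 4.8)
The plan is to construct the square directly from the Double projection lemma (Lemma~\ref{DoubleProjection}), applied to a suitable quadruple. Throughout write $\rho=\rho_{ab}$ and, as in the proof of Proposition~\ref{Fset}, $U=\rho^{-1}((a,b])$, so that $F=\rho^{-1}(a)\cap\overline U$. In particular $abc=a$ and $c\in\overline U\setminus U$.

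\emph{Choosing the auxiliary point.} Since the gate map $\rho_{ac}\colon M\to[a,c]$, $x\mapsto acx$, is continuous, $\rho_{ac}(c)=c\in V$, and $c\neq a$, there is a neighborhood $W\ni c$ with $\rho_{ac}(W)\subseteq V\setminus\{a\}$. Because $c\in\overline U$, we may pick $p\in U\cap W$. Set $b'=\rho(p)=abp$ and $c'=\rho_{ac}(p)=acp$. Then $b'\in(a,b]$ since $p\in U$, and $c'\in(a,c]\cap V$ by the choice of $W$ (in particular $c'\neq a$). Note also $p\notin[a,b]$: if $\rho(p)=p$ then the distributive law together with $abc=a$ gives $acp=a$, contradicting $c'\neq a$; and $p\neq a$, $p\neq c$ since neither $a$ nor $c$ lies in $U$.

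\emph{Applying Double projection.} Apply Lemma~\ref{DoubleProjection} with its $(a,b,c,d)$ taken to be $(c,a,b',p)$. These four points are distinct: $a\neq c$, $a\neq b'$, $p\notin\{a,c,b'\}$ (the last because $b'\in[a,b]$ while $p\notin[a,b]$), and $c\neq b'$ because $c\in[a,b]$ would force $c=\rho(c)=a$. The hypothesis ``$abc=b$'' becomes $cab'=a$, i.e.\ that the median of $a,b',c$ equals $a$; this median lies in $[a,b']\cap[a,c]\subseteq[a,b]\cap[a,c]$, and $[a,b]\cap[a,c]=\{a\}$ because any $x$ in both satisfies $x=abx=ab(acx)=(aba)(abc)x=aax=a$. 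The hypothesis ``$bcd=c$'' becomes $ab'p=b'$, which holds since $b'=abp$, being a median, lies in $[a,p]$. The lemma then produces a (possibly degenerate) median square $\square(c',a,b',e)$, where $\hat a=\rho_{ca}(p)=acp=c'$ and $e=\rho_{b'p}(c)$. Finally, $a,b',c'$ are pairwise distinct ($b'=c'$ would put $c'\in[a,b]\cap[a,c]=\{a\}$), so by Observation~\ref{DegenerateSquare} the square is non-degenerate, and $a,b',c'$ are three of its vertices with $b'\in(a,b]$ and $c'\in(a,c]\cap V$, as required.

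The argument is essentially bookkeeping once the quadruple $(c,a,b',p)$ is identified; the only genuinely delicate point is to shrink $W$ so that $\rho_{ac}(W)$ avoids $a$, which simultaneously guarantees $c'\neq a$ and forces $p\notin[a,b]$ — both of which are needed for distinctness and for the square to be non-degenerate with the three prescribed vertices in the correct cyclic position.
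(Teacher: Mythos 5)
Your proof is correct and takes essentially the same route as the paper: choose an auxiliary point $p\in U=\rho_{ab}^{-1}((a,b])$ near $c$, set $b'=\rho_{ab}(p)$, and apply the Double Projection Lemma to the chain $c,a,b',p$ (the paper's quadruple traversed in reverse), producing the same square $\square(c',a,b',e)$ with $c'$ the gate of $p$ in $[a,c]$. The only cosmetic difference is that you guarantee $c'\in V\setminus\{a\}$ via continuity of $\rho_{ac}$, where the paper instead takes $V$ convex and disjoint from $[a,b]$, and your distinctness checks are spelled out a bit more explicitly.
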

     
   \begin{proof}
   We refer to diagram \ref{NearbySquarePic} for the following. 
   \begin{figure}[hbt]
\begin{center}
\includegraphics{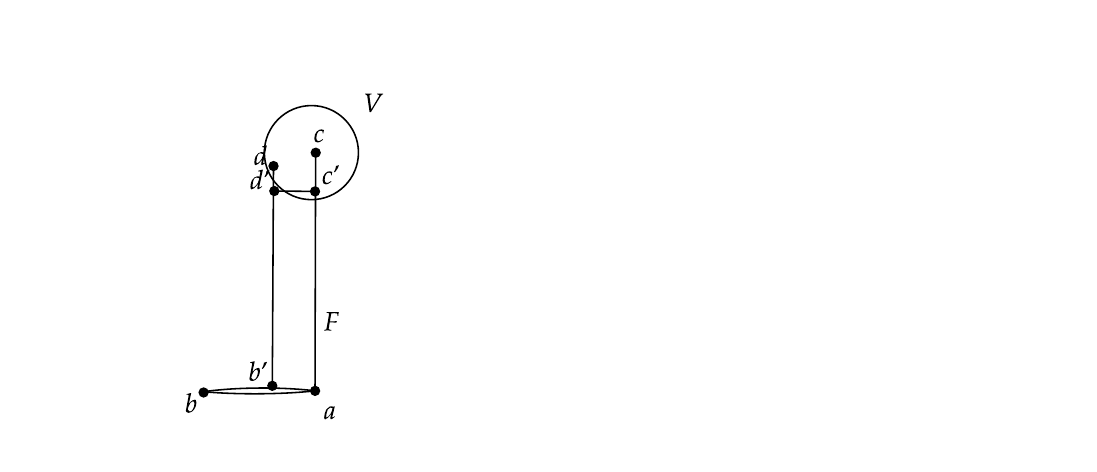}
\end{center}
\caption{Producing a local square at $a$.}
\label{NearbySquarePic}
\end{figure}
   
   We may assume that $V$ is a convex neighborhood of $c$ disjoint
   from $[a,b]$. Choose $d\in V\setminus \rho^{-1}(A)$. Set $b'=\rho(d)$. Note
   that $b'\not=a$ by our choice of $z\not\in F$. We now have a 4-tuple
   $\{a,c,d,b'\}$ satisfying the conditions of the Double Projection
   Lemma \ref{DoubleProjection}: we have that $db'a=b'$ and $cb'a=a$. 
   Thus, setting $c'=dac$ and $d'=cb'd$, Lemma \ref{DoubleProjection} tells us that the 4-tuple
   $\{a,c',d',b'\}$ is  
   a median square. Note that since $V$ is convex, $[d,c]\subset V$, so that $c'\in V$. Also,  $\{a,c',d',b'\}$ are distinct
   by our choice of $V$.
   Thus we have a median
   square $\square(a,c',d',b')$ with $c'\in A\cap V$ and $b'\in [a,b]$ as
   required. 
   \end{proof}

    \paragraph{Example}
    To see that the setting of ER homology manifolds is relevant, even
    when we are dealing with a median structure on $\Rbb^n$, we
    describe an example of a CAT(0) cubulation of $\R^5$ whose
    hyperplanes are ER-homology manifolds, but not manifolds, and
    likewise for orthogonal manifolds.

Let $\Sigma$ be a nonsimply-connected homology 3-sphere with a flag
triangulation. Let $M=\Sigma\times [0,\infty)/\Sigma\times 0$ be the
  open cone on $\Sigma$. Then $M$ is not a manifold since the link at
  0 is $\Sigma$, but it is a homology 4-manifold, and it's a manifold
  away from 0. It also has a CAT(0) cubing.

  Now let $X=M\times \R$. This is a 5-manifold away from $0\times\R$,
  and near $0\times\R$ it is locally homeomorphic to the double
  suspension $\Sigma*S^1$ of $\Sigma$ near the bad circle. So by the
  Cannon-Edwards Double Suspension theorem it is a
  5-manifold. It is also simply-connected at infinity so it is
  homeomorphic to $\R^5$ (Stallings for PL manifolds, and Kirby-Siebenmann
  show that contractible manifolds of dimension $\geq 5$ have a PL
  structure).

  \section{Median algebra structures on $\Rbb^2$}
  \label{DimensionTwo}

    In this section we assume $M$ is a topological median algebra homeomorphic to $\Rbb^2$. We aim to prove the local cubulation results in this separately, firstly because dimension 2 is easier to visualize, but also because it can be done using direct arguments, not relying on the homology manifold results of the previous section. Logically speaking, one can skip this section and go directly to the next section. 
    
    As usual, we will use the term \emph{median square}, denoted  $\square( a,b,c,d)$, to 
  refer to a quadruple of points with its cyclic median structure and the term \emph{square}, denoted $\blacksquare (a,b,c,d)$, to refer to the convex hull of 
  $\{a,b,c,d \}$. Note that by Theorem \ref{EmbeddedCube}, a median square can be ``filled in" to a square in a canonical way,  as a median embedding of $[0,1]\times [0,1]$ (with its canonical product median structure) into $M$.  
  
  By Lemma \ref{CubeIntersection}, we now that two squares intersect in a subsquare of both. Note that this intersection can be a degenerate square, so that squares can meet along a subsegment of an edge or at a vertex, or not at all.

  By a \emph{local squaring} at a point $a\in M$, we mean a
  neighborhood of $a$ which is the union of a finite collection of
  squares all containing $a$ as a vertex. Note that by the above trichotomy regarding the intersection of squares, we may assume that the squares in the local squaring meet along additive intervals (contained in edges of the squares) or $\{a\}$.

  \begin{thm}[Local squaring]
 Let $M\cong\R^2$ be equipped with a  median structure. Then every point of $M$ has a local CAT(0) squaring.
   \label{LocalSquaring}
   \end{thm}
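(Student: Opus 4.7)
The plan is to find a finite set of edges emanating from $a$, show that pairs of edges span median squares, and verify that their union is a neighborhood of $a$ forming a CAT(0) squaring.

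First, by Corollary \ref{RankAndConvexityEuclidean}, choose a small compact convex neighborhood $N$ of $a$; since $M \cong \R^2$, $N$ is a topological $2$-disk, and $M$ has rank at most $2$. Define an \emph{edge germ} at $a$ to be the germ of a maximal rank $1$ subinterval $[a,b] \subset N$; by Lemma \ref{Additive} such intervals are arcs emanating from $a$. The first major task is to establish that only finitely many edge germs exist at $a$. The idea is an accumulation argument: if infinitely many pairwise distinct edge germs existed, extracting a convergent sequence of points $b_n \to b_\infty$ in the compact $N$ (each $b_n$ lying on a different edge germ) and using continuity of the median forces configurations that violate either the 2-disk topology of $N$ (arcs cannot meet and then diverge locally in a planar disk) or the rank bound. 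This should be coupled with Lemma \ref{SquareInInterval}: if too many edge germs exist, one can produce nested median squares at $a$, eventually contradicting the compactness or planarity of $N$.

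Next, for each pair of distinct edge germs $E_i, E_j$ with representatives $b_i \in E_i$, $b_j \in E_j$ chosen close to $a$, apply the Double Projection Lemma \ref{DoubleProjection}: projecting $b_j$ onto $[a,b_i]$ and $b_i$ onto $[a,b_j]$ produces the other two corners of a median square based at $a$, which by Proposition \ref{EmbeddedCube} fills canonically to a topological square in $M$. The CAT(0) link condition is then automatic from the rank bound: if three edge germs pairwise bounded squares at $a$, Lemma \ref{Flag} would yield a median $3$-cube in $M$, contradicting $\rank(M) \leq 2$. Hence the link graph at $a$ is triangle-free, and the resulting square complex is automatically CAT(0).

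The final step, and the main obstacle, is to verify that the finitely many squares spanned by consecutive edge germs (in cyclic order around $a$) cover a neighborhood of $a$. Topologically, the arcs $E_1, \dots, E_k$ cut a small disk around $a$ into $k$ sectors, so it suffices to show that each sector coincides with the corresponding median square. The argument should proceed by showing that every point $p$ in a sufficiently small neighborhood of $a$ lies in $\hull\{a, b_i, b_j\}$ for the pair $E_i, E_j$ bounding its sector: projecting $p$ onto $[a,b_i]$ and $[a,b_j]$ and taking the median with $a$ should reconstruct $p$ via the product structure of the square (Lemma \ref{Bromberg}). The difficulty is ruling out ``extra'' points of $N$ that lie in no such square --- such a point would have to live in a new edge germ, contradicting the finiteness established in the first step. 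This reconciles the planar sector decomposition with the median square structure and produces the desired local CAT(0) squaring.
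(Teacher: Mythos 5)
Your outline shadows the paper's (finitely many edges at $a$, consecutive edges span squares, flag condition from the rank bound, union is a neighborhood), but the two load-bearing steps are not actually carried out, and one of them is wrong as stated. The square-production step fails: for two edge germs meeting at the corner $a$, the median $ab_ib_j$ is typically $a$ itself --- already in the standard $\ell_1$ structure on $\R^2$ with $b_i=(\epsilon,0)$, $b_j=(0,\epsilon)$ one computes $ab_ib_j=a$, so $\rho_{[a,b_i]}(b_j)=a$ and your double projection returns only a degenerate square. The hypotheses of Lemma \ref{DoubleProjection} ($abc=b$, $bcd=c$) describe a staircase configuration, not two edges at a common vertex, so the lemma cannot be applied to the pair $(b_i,b_j)$ directly; one needs a fourth point out in the open sector to project. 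This is exactly what the paper supplies: Lemma \ref{NearbySquare} chooses a point $d\notin\rho^{-1}(a)$ near the orthogonal manifold, and Step 4 of the paper's proof runs an arc $\alpha$ between consecutive edges, projects it to $[a,x_i]$, and applies Lemma \ref{NearbySquare} at the first point of $\alpha$ landing in $\rho^{-1}(a)$. All of this rests on the analysis of $F=\overline{U}\setminus U$ (that $(a,b]$ is convex for an additive interval and that $F$ is an embedded line, Proposition \ref{TwoDimFset}), none of which appears in your argument. Note also that your stronger claim that \emph{every} pair of edge germs spans a square is false (opposite coordinate directions in the $\ell_1$ plane do not), though you only need consecutive pairs.

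The finiteness step is likewise a genuine gap rather than a sketchable detail. Infinitely many arcs emanating from a point is perfectly consistent with planarity (a fan), and the rank bound by itself produces no contradiction, so "violates the 2-disk topology or the rank bound" is not an argument. The paper's Proposition \ref{LocallyBoundedAdditive} has as an explicit hypothesis that every additive interval extends to an infinite additive ray; establishing that extension is the paper's Step 2 (again via the orthogonal manifold being a line), and only then does the two-compact-sets accumulation argument at $\partial L$ go through. You also never address existence of even one additive interval at $a$ (the paper's Step 1, via the dichotomy on convexity of $(a,b]$ together with Lemma \ref{SquareInInterval} or Lemma \ref{NearbySquare}). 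Finally, for the covering step the paper does not identify sectors with squares pointwise; it trims the squares so that adjacent ones share entire edges and then invokes Invariance of Domain, whereas your proposed dichotomy "a point outside all squares must lie on a new edge germ" is itself an unproven claim. So the proposal needs the orthogonal-manifold machinery (or a substitute) to become a proof.
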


   The proof of this theorem involves an understanding of the additive intervals that emanate from a given point $a$. There are four main steps. 
   \begin{enumerate}
   \item Show that there are non-trivial additive intervals emanating from $a$. 
   \item Show that each such additive interval extends to an infinite ray emanating from $a$.
   \item Bound locally the number of additive intervals emanating from $a$.
   \item Show that any two cyclically consecutive additive intervals emanating from $a$ are edges of a square.
   \end{enumerate}

   \paragraph{Step 1.} 
   The first thing to recall that the dichotomy regarding the convexity of
   the half-open interval $(a,b]=[a,b]\setminus \{a\}$.
   
   When $(a,b]$ is not convex, by Lemma \ref{SquareInInterval}, we obtain a non-degenerate square in $[a,b]$ with a vertex at $a$. Since by Lemma \ref{Bromberg}, a square has a product structure and we are in dimension 2, the sides of the square are additive intervals.  This provides us with two additive intervals emanating from $a$.

   This, we are left to analyze the case that $(a,b]$ is convex, which is more involved. We will have to analyze more closely the structure of $\rho^{-1}(a)$ and its frontier, where $\rho$ is the projection map to $[a,b]$.  
   For the next few lemmas we fix some notation:
   \begin{itemize}
   \item $\rho:X\to [a,b]$, the usual gate projection map
   \item $A=\rho^{-1}(a)$
   \item $U=\rho^{-1}((a,b])$
   \item $\overline U$, the closure of $U$.
   \item $F=A\cap \overline U$, the frontier of $A$. 
   \end{itemize} 
   
   As noted in Lemma \ref{Fset}, the sets $U, \overline U, A$ and $F$ are all convex. We now claim that they are unbounded.   
   \begin{lemma}
    Let $[a,b]\in M$ be an interval suppose that $(a,b]$ is convex. Let $\rho:M\to [a,b]$ the gate map to $[a,b]$ then $A, U$ and $F$ are unbounded. 
   \label{UnboundedPreimage}

   \end{lemma}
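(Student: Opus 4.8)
The plan is to exploit the fact that $M \cong \R^2$ (or, more to the point, that $M$ is a noncompact ER homology $n$-manifold that is not compact) together with the convexity of the three sets. The cleanest route is to show $A = \rho^{-1}(a)$ is unbounded and then propagate. Since $M$ is path-connected, noncompact, and locally compact, if $A$ were bounded it would be a closed bounded — hence compact — subset of $M$. But $A$ is convex and locally compact, so by Corollary \ref{LocallyClosed} it is an ER, hence contractible; in particular it is a compact contractible proper subset of $M$. I would derive a contradiction from this: the gate map $\rho$ restricted to a large sphere (or a suitable large compact set) around $a$ in $M$ would have to avoid $a$ if $A$ were a small compact set, but a degree/separation argument shows the complement $M \setminus A$ cannot be retracted off a neighborhood of the compact set $A$ the way $\rho$ would force. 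Concretely: $\rho : M \to [a,b]$ is a retraction, $A = \rho^{-1}(a)$ is the ``fiber'' over an endpoint, and $\rho^{-1}((a,b])$ is open; if $A$ were compact, then $A$ has a compact convex neighborhood $N$ (Corollary \ref{RankAndConvexityEuclidean}), and $M \setminus A \supset M \setminus N$ is a neighborhood of infinity that $\rho$ maps into $(a,b]$, an arc. One then gets that $M$ deformation retracts onto the compact set $N$ rel nothing in a way incompatible with $M$ being a noncompact homology $n$-manifold (a noncompact homology manifold cannot be compact, and cannot deformation retract onto a compact subset in the required controlled way without $M$ itself being compact — here is where the ER homology manifold hypothesis does real work).

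Alternatively — and I suspect this is the argument the authors intend, since it is more elementary and self-contained in the $\R^2$ setting of this section — I would argue directly with paths. Pick $c \in (a,b]$ with $c \neq a$; by Observation following Lemma \ref{IntervalIntersection} and the fact that intervals are arcs or contain squares, there is a point genuinely ``beyond'' $a$. Since $M$ is homeomorphic to $\R^2$, it is unbounded, so fix an exhaustion $M = \bigcup_n K_n$ by compact sets and a point $p_n \notin K_n$ for each $n$. Consider $\rho(p_n)$. If infinitely many $\rho(p_n) = a$ we are done: $A$ is unbounded. Otherwise, for all large $n$, $\rho(p_n) \in (a,b]$, so $p_n \in U$, showing $U$ is unbounded. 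To pass unboundedness of $U$ to $A$ (and $F$): take $u_n \in U$ escaping to infinity with $\rho(u_n) = b_n \in (a,b]$; the gate $\rho_{ab_n}$-type argument, or rather projecting the arc from $u_n$ to $a$, stays in the convex set $\overline{U}$; and the point where it crosses into $A$ lies in $F = A \cap \overline{U}$. More carefully: the arc $\gamma_n$ from $u_n$ to $a$ inside the convex set $[u_n, a] \subset \overline U$ has one endpoint in $U$ and one endpoint ($a$) in $A \setminus U = A \cap \partial U \subset$... — one must be slightly careful, but the point $\rho_{[a,b]}$-image of $\gamma_n$ is an arc in $[a,b]$ from $b_n$ to $a$, and pulling back the parameter where it first equals $a$ gives a point of $\overline U$ mapping to $a$, i.e. a point of $F$; and if $u_n \to \infty$ one arranges these points to be unbounded too, e.g. because $A$ separates $\overline U$ from the rest and $u_n$ must be ``far'' in $\overline{U}$. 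Since $F \subset A$ and $F \subset \overline U$, unboundedness of $F$ gives unboundedness of both $A$ and $U$ simultaneously, so it suffices to show $F$ is unbounded, which the path argument above delivers.

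The key steps, in order, are: \textbf{(1)} reduce to showing one of the three is unbounded, using $F \subset A \cap \overline U$ and convexity to transfer unboundedness among them; \textbf{(2)} use that $M \cong \R^2$ is noncompact to produce points escaping every compact set; \textbf{(3)} apply the gate projection $\rho$ to the arc joining such an escaping point to $a$ inside the convex hull, and track where the projected arc in $[a,b]$ first hits $a$, producing a point of $F$; \textbf{(4)} check that as the escaping points go to infinity, so do the produced points of $F$ — this is the delicate part.

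The main obstacle is \textbf{step (4)}: ruling out the possibility that $F$ (equivalently $A$) is a bounded set sitting near $a$ while all of $U$'s unboundedness happens ``far from $F$.'' Intuitively in $\R^2$ this is impossible because $F$ separates (it is the frontier of the convex set $A$, and $A, U$ partition a neighborhood structure), so an unbounded $U$ forces an unbounded separating frontier; but making this rigorous requires either the separation properties of convex sets in $M$ (which follow from $M$ being an ER homology manifold and $A$, $\overline U$ being convex ERs of the appropriate dimensions, via Proposition \ref{Fset}), or a direct argument that a compact convex set in $M \cong \R^2$ cannot have the property that its $\rho$-saturation-complement is a punctured neighborhood of infinity. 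I would handle it by contradiction: if $A$ is compact, $A$ has a compact convex neighborhood $N$; then $\rho|_{M \setminus N}$ lands in the arc $(a,b]$, so $M \setminus N$ is homotopy equivalent (via $\rho$ and a deformation within the convex fibers, which are contractible ERs by Corollary \ref{LocallyClosed}) to a subspace of an arc, hence has trivial or free reduced homology in a way incompatible with $M \setminus N \simeq \R^2 \setminus (\text{disk})$, which has $H_1 = \Z$ — more precisely, the inclusion-induced maps would make $H_1$ of the end of $M$ vanish, contradicting that $M \cong \R^2$ has one end with $H_1$ of its neighborhoods equal to $\Z$. This contradiction shows $A$ is unbounded, and then $U$ and $F$ follow from the path-tracking of step (3) together with the fact that $F = A \cap \overline U$ is cofinal in $A$.
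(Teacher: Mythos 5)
Your overall strategy (argue by contradiction using contractibility of the convex pieces against the topology of $M\cong\R^2$) is the right one, but as written the plan has two genuine gaps. First, your argument that $A$ is unbounded routes through a compact convex neighborhood $N\supset A$ and a claim that $M\setminus N$ is homotopy equivalent to a subspace of the arc $(a,b]$ ``via $\rho$ and a deformation within the convex fibers.'' Neither step is justified: Corollary \ref{RankAndConvexityEuclidean} gives compact convex neighborhoods of \emph{points}, not of an arbitrary compact set (compactness of hulls needs interval compactness, Corollary \ref{HullCompact}, which is not assumed here), and the fibers of $\rho$ restricted to $M\setminus N$ are $\rho^{-1}(t)\setminus N$, which need not be convex or contractible, so the fiberwise contraction does not produce a homotopy equivalence. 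None of this machinery is needed: since $M=A\sqcup U$ and $U=\rho^{-1}((a,b])$ is convex (Lemma \ref{ConvexPreimages}), hence contractible, a bounded nonempty $A$ is immediately impossible, because a large circle in $U=M\setminus A$ has winding number $1$ about $a$, so $H_1(U)\neq 0$. The same one-line argument with the roles of $A$ and $U$ exchanged (and $A$ convex, being $\rho^{-1}(a)$) gives that $U$ is unbounded; your exhaustion/dichotomy argument only yields that \emph{one} of $A$, $U$ is unbounded.

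Second, and more seriously, the unboundedness of $F$ is never actually established. You correctly flag this as the delicate point (your step (4)), but then dispose of it by asserting that ``$F=A\cap\overline U$ is cofinal in $A$'' and that ``$A$ separates $\overline U$ from the rest''---this is precisely what has to be proved, and your path-tracking construction can perfectly well produce points of $F$ that all stay near $a$ even as $u_n\to\infty$. The paper's proof closes this gap with a one-ended-ness argument, which is the rigorous form of the separation intuition you gesture at: $F$ is the frontier of the closed set $A$, so $M\setminus F=\mathring A\sqcup U$ is a disjoint union of two open sets; if $F$ were bounded (hence compact, as it is closed) then, $A$ and $U$ being unbounded, both $\mathring A$ and $U$ would be unbounded, so removing the compact set $F$ from $M\cong\R^2$ would leave at least two unbounded components, contradicting the fact that $\R^2$ has one end. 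You should replace the ``cofinality'' assertion with this (or an equivalent) argument; with that and the direct contractibility-versus-winding-number argument above, the proof reduces to the short one given in the paper.
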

      \begin{proof}  If $A$ were bounded, we would have that $M-A$ is not contractible (because the first homology would not be trivial). But if $(a,b]$ is convex, then $U=\rho^{-1}((a,b])$ is convex and hence contractible, a contradiction. Similarly, $U$ is unbounded, for otherwise $A$ would not be contractible. Finally, if $F$ were bounded (and hence compact, since it is closed), we would have that $\R^2$ has more than one end. 
   \end{proof}

From the above, we are ready to prove Step 1.

   \begin{prop}
   Suppose that $a\in M$. Then there exists an additive interval emanating from $a$. 
   \end{prop}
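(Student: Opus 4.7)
The plan is to apply the dichotomy on whether the half-open interval $(a,b]$ is convex, for some fixed $b \neq a$ (which exists because $M \cong \R^2$). Running through the two cases will produce an additive interval emanating from $a$ in either situation, and in both cases the strategy is to produce a median square with $a$ as a vertex and then use rank considerations to conclude its edges at $a$ are additive.

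\textbf{Case where $(a,b]$ is not convex.} Here Lemma \ref{SquareInInterval} directly supplies a non-degenerate median square $\sigma = \square(a,c,e,d)$ contained in $[a,b]$, with $a$ as a vertex. I would argue that each of the two edges $[a,c]$ and $[a,d]$ at $a$ is additive. By Lemma \ref{Bromberg}, $\hull(\sigma) \cong [a,c] \times [a,d]$, and by rank-additivity for products combined with Corollary \ref{RankAndConvexityEuclidean} (which bounds $\rank(M) \leq 2$), we have $\rank([a,c]) + \rank([a,d]) \leq 2$. Each factor contains at least two points, so has rank at least $1$; therefore both factors have rank exactly $1$, and by Lemma \ref{Additive} both are additive intervals emanating from $a$.

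\textbf{Case where $(a,b]$ is convex.} Here there is no obvious square inside $[a,b]$ to latch onto, so I would invoke the orthogonal manifold machinery developed in Section \ref{MedER} (legal since $\R^2$ is an ER homology $2$-manifold). Proposition \ref{Fset} produces the orthogonal manifold $F$ at $a$, and Lemma \ref{UnboundedPreimage} shows $F$ is unbounded, so in particular $F \setminus \{a\} \neq \emptyset$. Picking any $c \in F \setminus \{a\}$ and any neighborhood $V$ of $c$, Lemma \ref{NearbySquare} furnishes $b' \in (a,b]$ and $c' \in (a,c]$ such that $a, c', b'$ are three vertices of a median square $\square(a,c',d',b')$. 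The same rank-additivity argument as in the previous case then shows that the edges $[a,c']$ and $[a,b']$ at $a$ are additive.

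The main obstacle is precisely the second case: when $(a,b]$ is convex there is no square already visible inside $[a,b]$ with $a$ as a vertex, so we cannot appeal to Lemma \ref{SquareInInterval}. This is exactly the role of the orthogonal manifold $F$ and the double-projection construction of Lemma \ref{NearbySquare}: unboundedness of $F$ (a genuinely two-dimensional topological consequence of $\R^2$ being contractible and one-ended) forces the existence of a point $c \in F \setminus \{a\}$, and the double projection uses $c$ to manufacture the needed square at $a$ out of thin air.
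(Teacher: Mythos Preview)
Your proposal is correct and follows essentially the same approach as the paper: split on whether $(a,b]$ is convex, invoke Lemma~\ref{SquareInInterval} in the non-convex case and Lemma~\ref{NearbySquare} (after noting $F\setminus\{a\}\neq\emptyset$ via Lemma~\ref{UnboundedPreimage}) in the convex case, then use the product decomposition of the resulting square together with the rank bound to conclude the edges at $a$ are additive. The only cosmetic difference is that you cite the general Proposition~\ref{Fset} for the orthogonal manifold, whereas the paper's dimension-$2$ section defines $F$ directly to keep that section self-contained; mathematically the content is identical.
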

   
   \begin{proof}
   Consider some interval $[a,b]$. If $(a,b]$ is not convex, then we apply Lemma \ref{SquareInInterval}, to conclude that there exists a square $\square(a,c,d,e)\subset [a,b]$. Now note that $[a,c]$ is an additive interval and we are done. 
   
   So suppose that $(a,b]$ is convex.  Then we apply Lemma \ref{NearbySquare} to conclude that there exists a square $\square{(a,w,y,z)}$ with $a$ as a vertex. As before, note that $[a,z]$ (and $[a,w]$) is an additive interval.   
  
        \end{proof}

\paragraph{Step 2.} We wish to show that every additive interval emanating from $a$ extends to an infinite additive ray. 
 
To obtain this, we show that when $(a,b]$ is convex, the set $F$ is actually an embedded real line. 

Establishing the analogue of this in higher dimensions relies on the theory of  homology manifolds and ENR's (see Section \ref{Fancy}). However, in the 2-dimensional case, there is a more direct argument, which we give here.

\begin{prop}
  Let $[a,b]$ be an interval with $(a,b]$ convex. Then $F$ is closed, convex and homeomorphic to $\R$. 
   \label{TwoDimFset}
   \end{prop}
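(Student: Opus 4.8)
The plan is to show three things about $F$ in turn: it is closed and convex (already established in Lemma \ref{Fset}), it is a one-dimensional ENR, and it is unbounded with exactly two ends and no branching, which together with finite rank will force $F\cong\R$. Since $F$ sits inside $M\cong\R^2$ and, by Lemma \ref{Fset}, $F$ is a closed convex subset that is itself an ER (being locally compact and convex, via Corollary \ref{LocallyClosed}), the main task is to pin down its dimension and its end structure. First I would argue that $F$ has rank at most $1$. Indeed, if $F$ contained a median square $\sigma=\square(p,q,r,s)$, then since $F=\rho^{-1}(a)\cap\overline U$ is convex, $\hull(\sigma)\subset F$; but near an interior point of $\hull(\sigma)$ we would find, together with the interval $[a,b]$ direction witnessed by points of $U$ arbitrarily close to $F$ (using Lemma \ref{NearbySquare}), a median $3$-cube in $M$, contradicting $\rk M\le 2$ from Corollary \ref{RankAndConvexityEuclidean}. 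Hence $F$ is a rank-$1$ median algebra that is also an ER, so every interval in $F$ is additive and thus an arc (Lemma \ref{Additive}); in particular $F$ is a contractible, locally compact, rank-$1$ median space — i.e. a real tree with the topology of a $1$-complex.

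Next I would rule out branch points, so that the real tree $F$ is in fact a (possibly half-infinite or bi-infinite) arc. Suppose $c\in F$ were a point of valence $\ge 3$ in the tree $F$, i.e. $F\setminus\{c\}$ has at least three components. Using Lemma \ref{NearbySquare} at the point $c$ (with $c\ne a$, which we may arrange since $F$ is unbounded by Lemma \ref{UnboundedPreimage}), we get a median square $\square(a,b',c',d')$ with $c'\in (a,c]\cap V$ for any prescribed neighborhood $V$ of $c$; the edge $[a,c']$ of this square lies in $A$, hence (taking $V$ small and noting $c'\in\overline U$ as $c'$ lies on a square meeting $U$) in $F$. Combined with the tree branching at $c$, this would manufacture a median square inside $F$ or, pushing slightly off $F$ into $U$, a median $3$-cube in $M$, again contradicting $\rk M\le 2$. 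So $F$ has no branch points and is an arc. Finally, $F$ is unbounded (Lemma \ref{UnboundedPreimage}) and, being a closed subset of $\R^2$ that separates — its complement contains the nonempty open convex set $U$ on one side and $\R^2\setminus\overline U$ on the other — it cannot have a single end; since a branchless unbounded arc has at most two ends, $F$ has exactly two ends and is therefore homeomorphic to $\R$.

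The step I expect to be the main obstacle is the no-branching argument: extracting an honest median square (and then a $3$-cube) from a merely topological branching of the tree $F$ requires care, because Lemma \ref{NearbySquare} produces squares with one vertex at $a$ and a controlled vertex near $c$, but one must check that the relevant edges genuinely lie in $F$ (not just in $A$) and that the square spanned in the branching direction is nondegenerate and median-independent from the $[a,b]$ direction. The cleanest route is probably: (i) show directly that $F$ is locally connected and locally compact of covering dimension $1$ (from being a rank-$1$ convex ER), (ii) invoke that a contractible, locally compact, uniquely arc-connected $1$-complex with no $3$-valent points and two ends is $\R$, and (iii) do the branching exclusion purely inside the median algebra $F$ using Observation \ref{DegenerateSquare} and the $\rk\le 2$ bound on $M$ restricted to $F\cup[a,b]$. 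The bounded case of $F$ is dispatched immediately by Lemma \ref{UnboundedPreimage}, and convexity and closedness come for free from Lemma \ref{Fset}.
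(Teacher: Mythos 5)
There is a genuine gap, and it sits exactly where you predicted: the no-branching step. You propose to exclude a valence-$\ge 3$ point of $F$ by extracting a median square in $F$, or a median $3$-cube in $M$, from the branching, contradicting $\rank M\le 2$. No such square or cube exists: a tripod (indeed any real tree) is a rank-$1$ median algebra, and the product of a tripod $T$ with an interval is a rank-$2$ median algebra containing no $3$-cube, so there is no median-algebraic obstruction to $F$ branching. In the model $T\times\R$ the orthogonal set to a ``vertical'' interval is precisely a tripod; the only reason this cannot occur here is that $T\times\R$ is not homeomorphic to $\R^2$. So the branching exclusion must use the planar topology of $M$, which your argument never invokes at the branch point. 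The paper's proof does exactly this: take a small disk $D$ around a putative branch point so that $D\cap F$ contains a tripod $T$; since every point of $T$ lies in $\overline U$, points of $U$ accumulate in at least two components of $D\setminus T$; the median interval between two such points of $U$ is a connected subset of $U$ (by convexity of $U$) and yet would have to cross $T\subset F$, which is disjoint from $U$ --- a contradiction. Your proposed repair (``a square inside $F$ or a $3$-cube by pushing into $U$'') cannot be carried out, so the central step of the proposition is missing.

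Two secondary problems. First, the rank-$\le 1$ reduction for $F$ is both more than is needed and not fully justified: Lemma \ref{NearbySquare} produces squares with a vertex at $a$, not at an arbitrary vertex of a hypothetical square deep inside $F$, so the claimed $3$-cube does not follow as stated. The paper instead gets $1$-dimensionality for free topologically: $F\subset A\cap\overline U$ has empty interior in $\R^2$, so a small compact convex neighborhood of a point of $F$ is a $1$-dimensional ER, hence a dendrite, and only then does one rule out branch points and boundary points. Second, your exclusion of the half-line case assumes that $\R^2\setminus\overline U\neq\emptyset$ (equivalently $A\neq F$) so that $F$ separates the plane; this is not established and would need an argument. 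The paper again argues locally: at a non-endpoint of a putative ray $F\cong[0,\infty)$ one finds two nearby points of $U$ in distinct components of $D\setminus F$, and their interval would have to cross $F$ while staying in $U$. Your overall skeleton (closed and convex from Proposition \ref{Fset}, $F$ an ER via Corollary \ref{LocallyClosed}, unboundedness from Lemma \ref{UnboundedPreimage}) agrees with the paper, but the two steps that actually force $F\cong\R$ require the planarity-plus-convexity argument, not the rank bound.
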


\begin{proof}
  This follows immediately from Theorem \ref{mitchell} but we give a
  more elementary proof.
Since $F$ inherits a median structure, it is an ER and it suffices to show that $F$ is a
1-manifold. Let $x\in F$ be an arbitrary point. By Corollary
\ref{RankAndConvexityEuclidean} there is a compact neighborhood $N$ of
$x$ in $F$. Since $N$ is a 1-dimensional ER, it is connected, locally
connected and contains no circles, and therefore it is a dendrite
\cite{whyburn}. We now argue that $x$ has valence 2. If $x$ has
valence $>2$ there is a disk neighborhood $D$ of $x$ in $\R^2$ whose
intersection with $F$ includes a tripod $T$: 3 arcs that have $x$ as a common
endpoint, the other endpoints are on $\partial D$, and any two arcs
intersect only at $x$. There are points of $U$ arbitrarily close to
$x$ in at least two components of $D\smallsetminus T$, for otherwise
points on one of the arcs would not be in $\overline U$. But then the
interval between two points of $U$, being connected, would have to
intersect $T$, contradicting the convexity of $U$.

Now we know that $F$ is a 1-manifold, possibly with boundary, and we
need to rule out the possibility that $F$ is a point or homeomorphic to $[0,1]$
or $[0,\infty)$. The first two cases are excluded since then
  $U=\R^2\smallsetminus F$ would not be contractible. The last
  possibility gives a contradiction similar to above, by taking two
  points of $U$ in distinct components of $D\smallsetminus F$ where
  $D$ is a disk neighborhood of a point $x$ that is not the endpoint.
\end{proof}

Recall that, as mentioned in Section \ref{Fancy}, the set $F$ defined above is called the orthogonal manifold to $[a,b]$.
   
   \begin{prop}
 Every additive interval extends to an infinite additive ray. 
   \label{ExtendibleR2}
   \end{prop}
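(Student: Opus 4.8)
\textbf{Proof proposal for Proposition \ref{ExtendibleR2}.} The plan is to show that an additive interval $[a,c]$ emanating from $a$ cannot be maximal among additive intervals emanating from $a$ unless it is already an infinite ray, and then deduce that its maximal extension is a ray. First I would observe that if $[a,c]$ is additive, then by Lemma \ref{Additive} it is an arc, hence $c\in[a,c]$ is an endpoint and $(a,c]$ is not the whole story: what matters is whether we can push past $c$. Consider any interval $[c,b']$ with $b'$ chosen outside $\rho_{ac}^{-1}(c)$-type sets so that $c\in[a,b']$ but $[a,b']\supsetneq[a,c]$; equivalently, apply the dichotomy on the half-open interval $(c,b']$ for suitable $b'$ lying ``on the far side'' of $c$ from $a$. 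Concretely I would take any interval $[a,b]$ with $[a,c]\subset[a,b]$ and $c\neq b$ (such $b$ exists since $M$ is not a single point and one can always enlarge an interval in $\R^2$), and analyze the structure near $c$.

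The key dichotomy to invoke is the one used in Step 1: either $(c,b]$ is convex, in which case Proposition \ref{TwoDimFset} gives that the orthogonal manifold $F_c$ to $[c,b]$ at $c$ is homeomorphic to $\R$ and passes through $c$; or $(c,b]$ is not convex, in which case Lemma \ref{SquareInInterval} produces a median square in $[c,b]$ with $c$ as a vertex. In the non-convex case, one side of that square is an additive interval $[c,c']$ emanating from $c$, and (in dimension $2$, using that $a\in[a,b]$ and the product structure of the square) I would argue that $[a,c']=[a,c]\cup[c,c']$ is again additive and strictly larger, so $[a,c]$ was not maximal. In the convex case, $F_c$ is a line through $c$; I would show that one of the two directions of $F_c$ from $c$, together with $[a,c]$, concatenates to a strictly longer additive interval — here one uses Lemma \ref{NearbySquare} to produce, for points $c''$ on $F_c$ arbitrarily close to $c$, a median square $\square(c,b'',c'',\cdot)$, whose edge $[c,c'']$ lies in $F_c$ and is additive, and then checks that $[a,c]\cup[c,c'']$ remains additive using the observation at the heart of Lemma \ref{Fset} that $[a,c]$ and $F_c$ meet only at $c$ and sit on ``perpendicular'' sides.

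The heart of the matter, then, is a concatenation lemma: if $[a,c]$ is additive and $[c,e]$ is an additive interval ``orthogonal enough'' to $[a,c]$ at $c$ (either a square edge or an arc of the orthogonal manifold $F_c$), then $[a,e]=[a,c]\cup[c,e]$ and this union is additive. Additivity of the union would follow by checking, for $x\in[a,e]$, that $x\in[a,c]$ or $x\in[c,e]$; I would project $x$ to $[a,c]$ via $\rho_{ac}$ and to $[c,b]$ via $\rho_{cb}$ and use convexity of the relevant preimages together with the rank-$1$ (arc) structure of each piece. Having established that $[a,c]$ extends to a strictly larger additive interval whenever $[a,c]$ is not already a ray, I would pass to a maximal additive interval containing $[a,c]$ using Zorn's lemma (a nested union of additive intervals emanating from $a$ is additive, and remains an arc), and conclude that this maximal object is a properly embedded ray: it cannot terminate at an interior point by the extension argument, and it cannot be a bounded arc with far endpoint $e$ since then we could extend past $e$, nor can it have a limit point issue because $M\cong\R^2$ is locally compact and additive intervals are closed.

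\textbf{Main obstacle.} I expect the delicate point to be the concatenation lemma, specifically verifying that gluing an additive interval to the orthogonal manifold at $c$ produces an additive (rank $1$) interval rather than something containing a square at $c$ — one must rule out that $[a,c]\cup[c,e]$ secretly supports a median square with vertex $c$, which would contradict additivity. This is exactly where the $2$-dimensionality is used: a square at $c$ together with the additive interval $[a,c]$ would force rank $\geq 2$ locally in a way compatible with $\R^2$ only if $[a,c]$ were itself a square edge, and the construction of $e$ (via Lemma \ref{NearbySquare} or Lemma \ref{SquareInInterval}) must be arranged so that $[c,e]$ lies in the orthogonal manifold $F_c$, which by Proposition \ref{TwoDimFset} is an arc and hence carries no square. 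Getting this bookkeeping right — which projections to apply and which convexity statement from Lemma \ref{Fset} to quote — is the technical crux.
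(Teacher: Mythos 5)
There are two genuine problems here. First, your opening move --- choosing $b$ with $[a,c]\subset[a,b]$ and $b\neq c$, justified only by ``one can always enlarge an interval in $\R^2$'' --- is not justified, and it is essentially a weak form of the statement you are trying to prove: producing any $b\neq c$ with $acb=c$ already asserts that there is something ``beyond'' $c$, and nothing established earlier gives this. The paper never needs such a point. Its proof applies Lemma \ref{NearbySquare} to the orthogonal manifold $F_{ab}$ of the given additive interval at $a$ to obtain a square $\square(a,x,y,z)$ with $x\in F_{ab}$ and $z\in[a,b]$, and then passes to the orthogonal manifold $F_{ax}$ of the transverse edge $[a,x]$: by Proposition \ref{TwoDimFset} this is a closed convex line, and it contains the germ $[a,z]$ of the original interval, so being bi-infinite it supplies the extension for free (the component of $F_{ax}\setminus\{z\}$ containing $a$, together with $[a,b]$, is the desired ray). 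No Zorn's lemma, no maximality, and no concatenation lemma are needed.

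Second, and more seriously, your convex case fails already in the model example. Take the standard $\ell_1$ structure on $\R^2$ with $a=(0,0)$, $c=(1,0)$, $b=(2,0)$: then $(c,b]$ is convex and the orthogonal manifold $F_c$ to $[c,b]$ at $c$ is the vertical line $\{x=1\}$. An edge $[c,c'']\subset F_c$ produced by Lemma \ref{NearbySquare} is vertical, so $[a,c]\cup[c,c'']$ is an L-shape whose hull $[a,c'']$ is a two-dimensional box; the union is not an interval of $M$ at all, let alone an additive one. The ``perpendicularity'' you quote from Lemma \ref{Fset} is precisely why this concatenation fails, not why it works: an additive continuation must go in the direction of $[c,b]$, not along $F_c$. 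Your non-convex case has a related defect: of the two edges at $c$ of the square given by Lemma \ref{SquareInInterval}, at most one can continue $[a,c]$ (if germs of both edges spanned squares with the incoming germ of $[a,c]$, Lemma \ref{Flag} would produce a $3$-cube, contradicting rank $\leq 2$), so you must choose the correct edge and then actually prove the concatenation lemma, which you only sketch; and the Zorn step is also delicate, since an increasing union of additive intervals need not be closed or be an interval. As written, the proposal does not establish the proposition; the key missing idea is the paper's use of the orthogonal manifold of the orthogonal direction, which contains the original interval's germ and is already an entire line.
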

   
   \begin{proof}

 Let $[a,b]$ be an additive interval. We will show that $[a,b]$ extends to an infinite closed convex ray based at $b$.  
 
 As before, let $\rho:M\to [a,b]$ be the gate map. Since $[a,b]$ is additive, we have that $(a,b]$ is convex (otherwise, by Lemma \ref{SquareInInterval}, there would be a square embedded in $[a,b]$). Let $F_{ab}$ be the orthogonal manifold to $[a,b]$ at $a$. Then by Lemma \ref{NearbySquare}, there exists a square $\square(a,x,y,z)$, with $x$ on the orthogonal manifold and $z\in [a,b]$. Now we consider the additive interval $[a,x]$ and the orthogonal manifold $F_{ax}$ to $[a,x]$ at $a$. Note that $F_{ax}$ is a closed real line that agrees with $[a,b]$ along the subinterval $[a,z]$. Thus the component of $F_{ax}-z$ containing $a$ together with $[a,b]$ is the required closed convex ray based at $b$. 
    \end{proof}

 \paragraph{Step 3.} We wish to show that the number of additive intervals emanating from $a$ is locally bounded. This holds in greater generality, so we state it in that form here. Let $K\subset M$ and $a\in K$. We say that an additive interval $[a,x]$ based at $a$ is \emph{ full in $K$}, if it is the intersection with $K$ of an additive ray based at $a$. Note that if every additive interval is extendible to an additive ray and $K$ is convex compact, then every additive interval in $K$ at $a$ extends to a full additive interval at $a$. 
  
 More precisely, we aim to show.
 
 \begin{prop}
 Let $M$ be a locally compact, finite rank, connected topological median algebra,
 such that every additive interval extends to an additive ray. Given
 $a\in M$, there exists a compact convex neighborhood $K$ of $a$ such
 that the number of full additive intervals based at $a$ in $K$
 is finite. 
 \label{LocallyBoundedAdditive}
 \end{prop}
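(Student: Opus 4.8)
The plan is to argue by contradiction: suppose that in every compact convex neighborhood $K$ of $a$ there are infinitely many full additive intervals based at $a$. Fix one compact convex neighborhood $K$ (which exists by Theorem \ref{LocallyConvex}) and choose a sequence of distinct full additive intervals $[a,x_n]$ in $K$. Since $K$ is compact we may pass to a subsequence so that $x_n\to x_\infty\in K$. The idea is that the additive rays through the $x_n$ are ``parallel-like'' near $a$, and infinitely many of them accumulating forces a positive-rank phenomenon (a square, hence rank $\geq 2$, or more generally a contradiction with finite rank) in an arbitrarily small neighborhood of $a$ --- but the key point I would exploit is more quantitative: each additive ray is an arc, the initial segments $[a,x_n]$ are pairwise intersecting only in a sub-additive-interval (by Lemma \ref{CubeIntersection}, since additive intervals are $1$-cubes and their intersection is a cube, hence a point or a subinterval), and if infinitely many of these share only $\{a\}$ we can use the double projection machinery to manufacture squares.

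More concretely, here is the sequence of steps. First, normalize: by Lemma \ref{CubeIntersection} two additive intervals at $a$ meet in $\{a\}$ or in a common initial subinterval; declare two full additive intervals \emph{equivalent} if they share an initial subinterval of positive ``length'' (i.e. containing a point $\neq a$), and observe this is an equivalence relation whose classes are totally ordered initial segments --- within a class there is a well-defined maximal common initial interval, and shrinking $K$ we may assume each class contributes at most (say) two distinct full additive intervals, both containing a common initial segment. So it suffices to bound the number of \emph{germs} of additive intervals at $a$, i.e. pairwise meeting only in $\{a\}$ after passing to suitable initial segments. Second, suppose there are infinitely many such germs $[a,x_n]$, pairwise meeting only at $a$. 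Project: for $n\neq m$, let $\rho_n=\rho_{a x_n}$ be the gate map to $[a,x_n]$; since $[a,x_n]\cap[a,x_m]=\{a\}$, the argument in Lemma \ref{Bromberg} (the computation $\rho_{ab}([a,d])=a$) shows $\rho_n(x_m)=a$, i.e. the germs all project to $a$ under each other's gate map, so each $[a,x_m]$ lies in the orthogonal manifold $F_n$ to $[a,x_n]$ at $a$.

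Third, use Lemma \ref{NearbySquare} and compactness. Pick any germ $[a,x_1]$ with orthogonal manifold $F_1$; all other germs lie in $F_1$ near $a$. Now $F_1$ is a convex ER homology $(n-1)$-manifold (Proposition \ref{Fset}); apply the proposition inductively, or in the rank-$2$ model case directly use Proposition \ref{TwoDimFset} that $F_1\cong\R$, to bound the number of additive intervals at $a$ \emph{inside $F_1$}. Iterating this descent (each orthogonal manifold drops rank by one and is again an ER homology manifold to which the same statement applies), after finitely many steps --- at most $rk(M)$, which is finite by Corollary \ref{RankAndConvexityEuclidean} --- we reach a rank-$1$ orthogonal manifold, which by Lemma \ref{Additive} is an arc, and an arc has exactly two germs of additive intervals at an interior point (one at an endpoint). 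Unwinding, the total number of germs of additive intervals at $a$ is bounded by a quantity depending only on $rk(M)$, and combined with the first normalization step this bounds the number of full additive intervals in a sufficiently small compact convex $K$, giving the contradiction and completing the proof.

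\textbf{Main obstacle.} I expect the hard part to be making the ``descent into orthogonal manifolds'' rigorous and genuinely finite: one must check that distinct germs of additive intervals at $a$ really do all lie in a single orthogonal manifold $F$ (not just that each pair projects to $a$), that $F$ inherits the hypotheses of the proposition so the induction applies, and that the recursion actually terminates --- i.e. that passing to an orthogonal manifold strictly decreases a well-founded quantity such as rank. The rank-drop for $F$ follows because a square with an edge transverse to $F$ together with a square inside $F$ assembles (via Lemma \ref{Flag} / Lemma \ref{rectangles are squares}) into a cube of one higher rank, so $rk(F)\leq rk(M)-1$ provided there is at least one additive interval leaving $F$ at $a$ --- which there is, namely $[a,x_1]$ itself. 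The delicate bookkeeping is the equivalence-class argument in Step 1 ensuring that ``finitely many germs'' upgrades to ``finitely many full additive intervals in $K$'' after possibly shrinking $K$; this is where one uses that two additive rays sharing an initial segment either coincide or diverge, controlled again by Lemma \ref{CubeIntersection} and the local product structure of squares.
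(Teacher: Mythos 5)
There is a genuine gap, and it is exactly the one you flagged as the ``main obstacle'': the containment of the other germs in a single orthogonal manifold is false, not merely delicate. From $[a,x_n]\cap[a,x_m]=\{a\}$ you correctly get $ax_nx_m=a$, but this only places $[a,x_m]$ in $A=\rho_{ax_n}^{-1}(a)$, not in its frontier $F_n=\overline{U}\setminus U$. For instance, in the $\ell_1$ plane with $a$ the origin and $[a,b]$ along the positive $x$-axis, $A$ is the closed half-plane $\{x\le 0\}$, $F$ is the $y$-axis, and the additive germ along the negative $x$-axis meets $[a,b]$ only at $a$ yet does not lie in $F$. So the descent into orthogonal manifolds cannot capture all germs, and the induction does not get off the ground. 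There are two further structural problems: the proposition is stated (and used, in both the $2$-dimensional and the higher-dimensional arguments) for a general locally compact, finite-rank, connected topological median algebra, whereas your argument imports Propositions \ref{Fset} and \ref{TwoDimFset} and ``local cubulation of $F$'', which require $M$ to be an ER homology manifold; and invoking a local cubulation of $F$ inductively is dangerously circular, since the local cubulation theorem itself uses this proposition as its Step 3.

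The paper's proof is much more elementary and uses the ray-extension hypothesis in an essential way that your sketch does not. Choose nested compact convex neighborhoods $K\subset\mathring L$ of $a$. Each full additive interval in $K$ extends along its ray to a full additive interval $[a,x_n']$ in $L$ with $x_n'$ in the frontier of $L$; two distinct full intervals in $K$ must have their extensions branch at a point of $K$ (otherwise their intersections with $K$ would coincide). If there were infinitely many, pass to a subsequence with $x_n'\to x'$ in the frontier of $L$, which has a neighborhood disjoint from $K$. By continuity of the median, $ax_n'x_m'\to x'$ for $n,m$ large; but $ax_n'x_m'$ lies in $[a,x_n']\cap[a,x_m']$ and hence records the branch point, which must lie in $K$ --- a contradiction. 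No orthogonal manifolds, no homology-manifold theory, and no auxiliary induction on rank are needed; the only tools are local convexity (Theorem \ref{LocallyConvex}), compactness, and continuity of the median.
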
  
 
\begin{proof}

 Recall that by Theorem \ref{LocallyConvex}, $M$ is locally convex. 
 Note that for each additive ray emanating from $a$, the intersection
 with a compact convex neighborhood of $K$ is a full additive
 interval. Every such interval is of the form $[a,x]$ where $x\in
\partial K = K\setminus \mathring{K}$.

Observe that by local convexity and local compactness, we can arrange that there are a pair of compact convex neighborhoods $K$ and $L$ such that $a\in K\subset\mathring L$. 

Note that each full additive interval in $K$ extends to one in $L$. Moreover, it $[a,x]$ and $[a,y]$ are two different full additive intervals in $K$, they extend to full additive intervals $[a,x']$ and $[a,y']$ in $L$ respectively, such that the intersection of $[a,x']$ and $[a,y']$ is a subinterval $[a,z]$, where $z$ is in $K$. 

Now suppose that we have an infinite sequence $\{[a,x_n]\}$ of full
additive intervals in $K$. We extend each $[a,x_n]$ to a full additive
interval $[a,x_n']$ in $L$. By compactness of $L$, we may pass to a
subsequence such that $\{x_n'\}$ converges.

Note that $x_n'\to x'$, where $x'\in \partial L$, which contains a neighborhood not meeting $K$. But by choosing $x_n'$ and $x_m'$ sufficiently close to $x'$, we have that $ax'_nx'_m\to x$ which means the intervals $[a,x'_n]$ and $[a,x'_m]$ bifurcate at points close to $x'$, contradicting the fact that they bifurcate in $K$. 
\end{proof}

     \paragraph{Step 4.} 
     We now have at $a\in M$ a compact convex neighborhood $K$ and a
     locally finite collection of additive intervals $[x_1,a],\ldots,
     [x_n,a]$ in $K$, so that $[a,x_i]\cap [a,x_j]=\{a\}$ and 
     so that every additive interval emanating from $a$ is either contained in some $[a,x_i]$ or contains some $[a,x_i]$. 
     We label them cyclically about $a$, so that $[a,x_i]$ and $[a,x_{i+1}]$ do not have any additive intervals emanating from $a$ between them.

     We now prove 
     
     \begin{claim}
     There exist subintervals $[a,y_i]$ of $[a,x_i]$ and points $z_i$
     such that for all $i$ (mod $n$), the 4-tuple $\{a,y_i, z_i,
     y_{i+1}\}$ is a median square. 
     \end{claim}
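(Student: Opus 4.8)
The plan is to produce the median squares one pair at a time, working cyclically around $a$, by repeatedly invoking the machinery of orthogonal manifolds (Proposition~\ref{TwoDimFset}, Proposition~\ref{ExtendibleR2}) together with the Double Projection Lemma~\ref{DoubleProjection} and Lemma~\ref{NearbySquare}. Fix two cyclically consecutive additive intervals $[a,x_i]$ and $[a,x_{i+1}]$. The key point is that $(a,x_i]$ is convex (otherwise $[a,x_i]$ would contain a square and hence fail to be a genuine ``edge'' by Lemma~\ref{SquareInInterval}, contradicting that $[a,x_i]$ is one of our cyclically minimal additive intervals). So the orthogonal manifold $F_i$ to $[a,x_i]$ at $a$ is defined and, by Proposition~\ref{TwoDimFset}, is an embedded copy of $\R$ through $a$. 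First I would show that the additive ray through $[a,x_{i+1}]$ must enter $F_i$: since $x_{i+1}$ and $x_i$ bound additive intervals that are cyclically adjacent, no additive interval emanates from $a$ strictly between them, and on the other hand Lemma~\ref{NearbySquare} applied to $[a,x_i]$ produces a square $\square(a,w,y,z)$ with $z\in(a,x_i]$ and $w$ on $F_i$; the additive interval $[a,w]$ lies ``on the $x_{i+1}$ side'' of $[a,x_i]$, so by cyclic minimality it must in fact be (a subinterval of, or contain) the ray through $x_{i+1}$.

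Granting that, here is the construction of the square for the pair $(i,i+1)$. Take the square $\square(a,w,y,z)$ from Lemma~\ref{NearbySquare} with $z\in(a,x_i]$ and $w$ on the orthogonal manifold $F_i$ --- and, by the previous paragraph, $[a,w]\subset[a,x_{i+1}]$ after shrinking. The side $[a,z]$ is a subinterval of $[a,x_i]$ and the side $[a,w]$ is a subinterval of $[a,x_{i+1}]$, so setting $y_i$ to be the appropriate endpoint on $[a,x_i]$, $y_{i+1}'$ the endpoint on $[a,x_{i+1}]$, and $z_i=y$, the four points $\{a,y_i,z_i,y_{i+1}'\}$ form a median square by construction. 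Doing this for every $i$ mod $n$ yields, for each $i$, \emph{two} candidate subintervals of $[a,x_{i+1}]$ --- one coming from the square with $[a,x_i]$ and one from the square with $[a,x_{i+2}]$. To get a \emph{single} common choice $[a,y_{i+1}]$ that works on both sides simultaneously, I would take $y_{i+1}$ to be the one closer to $a$ (using that $[a,x_{i+1}]$ is an arc, hence linearly ordered, by Lemma~\ref{Additive}), and observe that a median square $\square(a,u,*,v)$ restricts to a median square $\square(a,u',*',v)$ when $u'\in[a,u]$: this is exactly the product structure of Lemma~\ref{Bromberg} applied to the arc $[a,u]$. After this shrinking step we get a uniform family $[a,y_i]\subset[a,x_i]$ and points $z_i$ with $\square(a,y_i,z_i,y_{i+1})$ a median square for every $i$.

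The main obstacle I anticipate is the ``matching'' argument: ensuring that the square produced from consecutive intervals really has one side along $[a,x_i]$ and the adjacent side along $[a,x_{i+1}]$, rather than along some other additive interval emanating from $a$. This is where cyclic minimality of the labeling $[a,x_1],\dots,[a,x_n]$ and the structure of the orthogonal line $F_i$ must be used carefully --- one must rule out that the square ``skips over'' $[a,x_{i+1}]$ or lands between $x_i$ and $x_{i+1}$ on a spurious additive interval, which cannot happen precisely because there are none there. A secondary technical point is verifying that shrinking a side of a median square keeps it a median square; as noted this is immediate from Lemma~\ref{Bromberg} (the side $[a,y_i]$ is a face of the filled square, and the product coordinates restrict), so it should not cause real difficulty. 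Finally, once all $n$ squares are in hand with the common subintervals $[a,y_i]$, the $z_i$ are determined and the claim follows.
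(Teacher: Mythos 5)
Your overall strategy (use the orthogonal line $F_i$ of Proposition~\ref{TwoDimFset} and Lemma~\ref{NearbySquare} to manufacture a square with one side in $[a,x_i]$, then shrink to get common endpoints $y_i$) is close in spirit to the paper's, and your shrinking step via the product structure of Lemma~\ref{Bromberg} is correct and is essentially how the paper later passes to subsquares. But there is a genuine gap exactly at the point you flag as the ``main obstacle'': you never prove that the second side of the square lies along $[a,x_{i+1}]$. The line $F_i$ has two ends; each end does determine an additive interval at $a$ (so, by the comparability property in the setup of Step 4, each end is aligned with some ray $[a,x_j]$, $j\neq i$), but nothing in your argument identifies these $j$'s with $i+1$ and $i-1$. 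Saying the interval $[a,w]$ lies ``on the $x_{i+1}$ side'' presupposes a correspondence between the ends of $F_i$ and the planar cyclic order of the rays, and that correspondence is precisely what has to be established; ``cyclic minimality'' (no additive rays strictly between $x_i$ and $x_{i+1}$) can only be invoked after you know the new additive interval sits in the planar sector between $[a,x_i]$ and $[a,x_{i+1}]$. As written, your construction only shows that each ray spans a square with two other rays, not that cyclically consecutive rays do.

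The paper closes this gap with a topological localization device that your proposal lacks: choose an arc $\alpha$ in $K\setminus\{a\}$ from $x_i$ to $x_{i+1}$ missing all the other additive intervals, push it forward by the gate map $\rho$ to $[a,x_i]$, and take the \emph{first} point $w_i$ of $\alpha$ that projects to $a$. The initial subarc projects into $(a,x_i]$, so $w_i$ lies on the frontier $F_i$, and because the arc is confined to the sector between the two consecutive rays, the point of $F_i$ so obtained (and hence the square supplied by Lemma~\ref{NearbySquare} at $w_i$, whose second side is an additive interval near $w_i$) is tied to the correct side, i.e.\ to the ray through $x_{i+1}$. Some version of this planar separation argument --- connecting the median-algebraic construction of $F_i$ to the cyclic labeling of the $x_i$'s --- is needed; without it your proof does not go through.
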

     
     This claim will complete the proof of Theorem
     \ref{LocalSquaring}, as we may then pass to subsquares of these
     squares so that any two squares which intersect along a piece of
     their edge actually intersect along their entire edge. The union of these squares would then be a neighborhood of $a$ by Invariance of Domain. 
          
     To prove the claim, consider an arc $\alpha$ in $K\setminus
     \{a\}$ between $x_i$ and $x_{i+1}$ which does not pass through
     any of the other additive intervals emanating from $a$. Consider
     the gate map $\rho:M\to [a,x_i]$ and let $A=\rho^{-1}(a)$. Note
     that $\rho(\alpha)$ is a path in $[a,x_i]$ from $x_i$ to
     $a$. Consider the first point $w_i\in \alpha$ which projects to
     $a$. By Lemma \ref{NearbySquare}, since $w_i\in A\setminus
     \mathring{A}$, we have  a square of the form $\square(a,
     w'_i, u_i, v_i)$, where $w'_i\in A$ and close to $w_i$ and
     $v_i\subset [a,x_i]$. This establishes the claim.

  \section{The Local Cubulation Theorem in Higher Dimensions}
  \label{HigherDimensions}

  In this section, we generalize the local squaring result of the previous section to higher dimensions. The proof uses some of the same ideas and follows the same outline, but also relies on some new ingredients. 
  
  As in the 2-dimensional case, we will want to look at an interval
  $[a,b]$ and under the assumption that $(a,b]$ is convex, 
    use the orthogonal manifold to $[a,b]$ at $a$ to build
    cubes around $a$. Recall that from Proposition \ref{Fset}, the orthogonal manifold to $[a,b]$ at $a$ is
    an ER homology manifold of one lower dimension, so we will make an
    inductive argument.

  Our main theorem is 
  
\begin{thm}
Suppose that $M$ is a topological median algebra and an $n$-dimensional ER homology manifold. Then  every point $a\in M$  is locally isomorphic to a CAT(0) cube complex as a topological median algebra. 
\label{LocalCubingHigherDimension}
\end{thm}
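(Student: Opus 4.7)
The plan is to induct on $n = \dim M$, with the cases $n \le 2$ supplied by Section~\ref{DimensionTwo}. I follow the four-step outline of the 2-dimensional proof: produce additive edges at a fixed point $a$, extend them to rays, control their cardinality, and assemble them into cubes via the flag lemma; the new ingredient is the ER homology manifold theory of Section~\ref{Fancy}, which replaces the ad-hoc planar argument at the final step. Fix $a \in M$, choose a compact convex neighborhood $K$ of $a$ using Theorem~\ref{LocallyConvex}, and pick some $b \in K$ with $b \ne a$.

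First, I produce cubes at $a$. After passing to a sub-interval via Lemma~\ref{SquareInInterval}, I may assume $(a,b]$ is convex, since otherwise $[a,b]$ already contains a median square at $a$. Proposition~\ref{Fset} then delivers the orthogonal manifold $F$ at $a$ as an ER homology $(n-1)$-manifold, so by the inductive hypothesis $F$ admits a local CAT(0) cubing at $a$. Lemma~\ref{NearbySquare} combines each edge of that cubing with $[a,b]$ into a median square at $a$, and iterated application of Lemma~\ref{Flag} together with Corollary~\ref{BrombergNdim} lifts every cube of the cubing of $F$ at $a$ to a cube of one higher dimension in $M$. I also need additive edges pointing away from $b$; these come from the higher-dimensional analogue of Proposition~\ref{ExtendibleR2}, asserting that every additive interval extends to a ray passing through $a$. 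The argument mirrors the 2-dimensional one, invoking the orthogonal manifold of an auxiliary transverse edge $[a,x]$ and applying the inductive local cubing of $F$.

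Second, I enumerate all full additive edges $[a,e_1],\dots,[a,e_N]$ at $a$ in $K$; Proposition~\ref{LocallyBoundedAdditive}, whose proof uses only local convexity, local compactness, and extendibility of additive intervals, shows $N < \infty$. I then record every subfamily that pairwise spans median squares at $a$; by Lemma~\ref{Flag} each such family spans a median cube, whose hull is a topological cube by Corollary~\ref{BrombergNdim}. Lemma~\ref{CubeIntersection} guarantees that two such hulls meet in a common subcube, so after passing to sufficiently small sub-edges the hulls glue into a finite cube complex $P$ at $a$. The flag condition on vertex-links supplied by Lemma~\ref{Flag}, together with contractibility of the star of $a$, makes $P$ CAT(0).

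The main obstacle is the final step: showing $P$ contains a neighborhood of $a$ in $M$. I plan to subdivide $P$ simplicially and apply Proposition~\ref{LocallyEverything}, which requires verifying at $a$ that every simplex lies in an $n$-simplex and every $(n-1)$-simplex lies in exactly two $n$-simplices. The first condition is inductive: every cube of $F$ at $a$ lies in an $(n-1)$-cube of the local cubing of $F$, which lifts to an $n$-cube in $M$ by pairing with $[a,b]$. The second condition, the pairing of codimension-one faces, is the delicate part: faces transverse to $[a,b]$ pair via the inductive instance of the same hypothesis applied to $F$, while faces parallel to $[a,b]$ pair by invoking the orthogonal manifold of another transverse additive edge $[a,e_j]$, and the coherence of all these pairings across the different choices of $j$ is the technical heart of the argument. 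Once this is established, Proposition~\ref{LocallyEverything} shows $P$ contains a neighborhood of $a$, and the product median structure on each cube (Corollary~\ref{BrombergNdim}) together with the gate-map identity~(\ref{retraction}) promotes the inclusion $P \hookrightarrow M$ to a median isomorphism between $P$ and this neighborhood, completing the inductive step.
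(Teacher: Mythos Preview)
Your proposal follows essentially the same route as the paper: induction on $n$ via the orthogonal manifold (Proposition~\ref{Fset}), the same four-step outline, the same key lemmas (\ref{SquareInInterval}, \ref{NearbySquare}, \ref{Flag}, \ref{LocallyBoundedAdditive}), and the final appeal to Proposition~\ref{LocallyEverything}.

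The one substantive presentational difference is in Step~4. You fix a single edge $[a,b]$ at the outset and then split $(n-1)$-faces of $P$ into ``transverse'' and ``parallel'' cases relative to this fixed $[a,b]$, which forces you to invoke the orthogonal manifold of an auxiliary edge $[a,e_j]$ for parallel faces and then worry about ``coherence of pairings across different choices of $j$''. The paper avoids this entirely: for \emph{each} cube $\sigma$ under consideration it picks an edge $[a,b]$ \emph{of $\sigma$} and decomposes $\sigma = \tau \times [a,b]$ with $\tau$ in the orthogonal manifold $F_{ab}$. Since any $n$-cube containing $\sigma$ must also contain this edge $[a,b]$, it too decomposes as $\nu \times [a,b]$ with $\nu$ an $(n-1)$-cube in $F_{ab}$ containing $\tau$; the inductive ``exactly two'' statement for $\tau$ in $F_{ab}$ then gives exactly two such $\nu$, hence exactly two $n$-cubes containing $\sigma$. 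No case split and no coherence issue arise, because the count of $n$-cubes containing $\sigma$ is intrinsic and does not depend on which edge of $\sigma$ you use to access it. Your ``technical heart'' is thus a phantom difficulty introduced by fixing $[a,b]$ globally; the same remark applies to your verification of the first condition, where you only treat cubes lying in $F$ rather than picking an edge of each cube in $P$.
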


\begin{proof}

We will show this in steps, as in the 2-dimensional case. 
\paragraph{Step 1.}  We show that there exist additive intervals
emanating from $a$. As in the 2-dimensional case, we focus on
nontrivial intervals of the form $[a,b]$. If $(a,b]$ is not convex, by Lemma \ref{SquareInInterval}, we
  find a median square $\square(a,c,d,e)\subset [a,b]$. By Lemma \ref{Bromberg}, for  $\square(a,c,d,e)$, we have that $[a,d]\cong [a,c]\times [a,e]$, so that $rk[a,d]=rk[a,c]\times rk[a,e]$. Thus, the interval $[a,c]\subset[a,b]$ is an interval emanating from $a$ of smaller rank.  Since $M$ has  finite rank, by applying Lemma \ref{SquareInInterval} finitely many times, we eventually
  obtain a nontrivial interval $[a,b]$ with $(a,b]$ convex.

As usual, we let $\rho_{ab}$ denote projection onto $[a,b]$. We define $U=\rho^{-1}((a,b])$ (which is open in $M$), and define $F=\overline{U}\setminus U$, the orthogonal manifold to $[a,b]$ at $a$. 
By Proposition \ref{Fset}, we have that $F$ is an ER homology manifold of dimension $n-1$. Thus, by induction, it has a local cubulation by $(n-1)$-cubes at $a$. In particular, this means that there are additive intervals emanating from $a$ in $F$ and hence in $M$. 

\paragraph{Step 2.} 
We show additive intervals emanating from $a$ are extendible to infinite additive rays. We proceed as we did in dimension 2. We consider an additive interval $[a,b]$ at $a$. We then consider the orthogonal manifold $F$ to $[a,b]$ at $a$. Since by induction it is locally cubulated, there exists an additive interval $[a,c]$ in $F$. Now since by definition $\rho_{ab}(F)=a$, we have that $abc=a$. This also tells us that the projection of $b$ onto $[a,c]$ is $a$. Thus, $b$ is contained in the orthogonal manifold $F'$ to $[a,c]$ at $a$. Since $F'$ is an ER homology manifold of dimension $n-1$, it follows, by induction that $[a,b]$ is extendible to an infinite additive ray in $F'$ and hence in $M$, as required.

\paragraph{Step 3.} 
Bounding the number of additive intervals emanating from $a$. This is direct application of Proposition \ref{LocallyBoundedAdditive} as $M$ is locally compact and additive intervals are extendible to additive rays. 

\paragraph{Step 4.} 
Finally we produce the requisite local
cubulation. 

Recall that by a $k$-cube in $M$ we mean a
 median embedding of $[0,1]^k$ in $M$ spanned by $k$ additive  intervals. 

 By Lemma \ref{CubeIntersection},  any two cubes at $a$ meet along a subcube of a face of each (where the term ``face" includes the entire cube, not just proper faces). We will need to take a bit of care to find cubes that actually meet along faces and not subcubes of faces.

  Since there are finitely many (infinitely extendible) additive rays
  emanating from $a$, and cubes are spanned by additive intervals,
  there are finitely many germs of cubes at $a$; that is, there exists a finite collection of cubes $\C_a$ at $a$, such that for every $k$-cube $\sigma$ at $a$, there exists a $k$-cube $\tau\in \C_a$ such that $\sigma$ and 
  $\tau$ overlap along a proper $k$-subcube of both.

  Since $\C_a$ is finite, for each additive ray $J$ emanating from $a$, we can choose a point  $b_J\in J$ so that the additive interval $\nu_J=[a,b_J]$ is contained in every $\sigma\in\C_a$ which contains some proper subinterval of $J$. We can then replace each $\sigma\in\C_a$ by a cube spanned by a collection of additive intervals in $\I= \{\nu_J\}$. Note now that when such cubes intersect, they intersect along a face of both, so that $K_a = \bigcup_{\sigma\in\C_a}  \sigma$ is a cube complex embedded in $M$. We also have the following

  \begin{obs}
  If some collection of additive intervals at $a$ spans a cube, then the corresponding additive intervals in $\I$ span a cube. 
  \end{obs}
 
 In order to apply Proposition \ref{LocallyEverything} to conclude that $K_a$ is a neighborhood of $a$,  and to see that $K_a$ is CAT(0) we will need to prove the following about $K_a$. 
 
 \begin{enumerate}
 \item Every cube in $K_a$ containing $a$ is contained in an $n$-cube.
 \item  Any $(n-1)$-cube in $K_a$  containing $a$ is contained in precisely two
   cubes.
 \item  The link of $K_a$ at $a$ is flag.
\end{enumerate}
 We will prove these three properties by induction on dimension.

 For Item 1, consider a $k$-cube $\sigma$ in $\C_a$.
 Consider one of
 its edges $[a,b]\in \C_a$. Let $F=F_{ab}$ 
 denote the orthogonal manifold
 to $[a,b]$ at $a$. By induction, there is a local cubulation
 satisfying the above three properties for $F$ at $a$. Let $\C_a(F)$
 denote the set of cubes of this local cubulation.
 Note that $\sigma$
 decomposes as a product $\sigma\cong\tau\times [a,b]$, where
 $\tau\subset F$. Now since $F$ satisfies Property 1 above, there
 exists an $(n-1)$-cube $\nu\in \C_a(F)$ such that
 $\tau\subset\nu$. Consider all the edges of $\nu$ at $a$. By Lemma \ref{NearbySquare}, each edge of $\nu$ has a subinterval which spans a square with some subinterval of $[a,b]$. By the Observation above, we have that the collection of edges of $\nu$ together with $[a,b]$ have the property that every pair spans a square. Thus, by Lemma \ref{Flag}, all the edges of $\nu$  together with $[a,b]$ span an $n$-cube $\nu\times [a,b]$  containing $\tau$.

For Item 2, the argument is similar. We let $\sigma$ be an $(n-1)$-cube  and let $[a,b]$ and $\tau$ be as before, so that $\sigma\cong\tau\times [a,b]$ and $\tau\in F$. Now $\tau$ is an $(n-2)$-cube in $F$, and $F$ is $n-1$-dimensional. So by induction, there exist precisely two $(n-1)$-cubes $\nu_1$ and $\nu_2$ in $\C_a(F)$ containing $\tau$. Applying Lemma \ref{Flag}, to the edges of $\nu_1$ and $\nu_2$ together with $[a,b]$, we see that there are two $n$-cubes $\nu_1\times [a,b]$ and $\nu_2\times [a,b]$ containing $\sigma$. Moreover, any $n$-cube containing $\sigma$, would have to have the structure of a product of an $n-1$-cube in $F$ and $[a,b]$, so that there are exactly two $n$-cubes at in $K_a$ containing $\sigma$. 

Finally, we have to show Item 3, which is that the flag condition is satisfied. For this, we need to show that if $[a,b_1], \ldots,[a,b_k]\in \C_a$ are a collection of additive intervals such that every two bound a square, then they span a $k$-cube. This follows immediately from Lemma \ref{Flag}.

We thus obtain the $K_a$ is a CAT(0) cube complex.  Now the star of $a$ in $K_a$ is  a simplicial complex $P$ to which we may apply Proposition \ref{LocallyEverything} to conclude that $K_a$ contains some open neighborhood $U$, as required. 

\end{proof}

\section{Metrization}
\label{Metrization}

 \subsection{Complete metrization}
 Recall that a median metric space is a metric space $(X,d)$ such that
 for any three points $a_1,a_2,a_3\in X$, there exists a unique point
 $x$ such that $d(a_i,a_j)=d(a_i,x)+d(x,a_j)$ for any
 $i\not=j\in\{1,2,3\}$. It is an exercise to check that if we define
 the interval between two points as $$[a,b]=\{x\in X\vert
 d(a,b)=d(a,x)+d(x,b)\}$$ then for a median metric space, these
 intervals satisfy the properties of intervals in Sholander's
 Theorem. Thus a median metric space is  also a median algebra. It is
 also easy to see that the median is a continuous map, so that a
 median metric space is naturally a topological median algebra. (For details, see, for example \cite{ChatterjiDrutuHaglund2010}.)

We now wish to understand when the median structures on homology manifolds are completely metrizable. That is, given a median structure on an ER homology manifold, when can we put a complete median metric on it realizing the given median structure.

Recall that a topological median algebra is \emph{interval compact} if the interval between any two points is compact.  Our main theorem is the following

\begin{theorem}
If $M$ is a  median structure on an $ER$ homology manifold, then $M$ is completely metrizable if and only if it is interval compact. 
\label{CompleteMetrization}
\end{theorem}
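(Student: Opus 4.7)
The plan is to handle the two implications separately; the backward direction is the substantive one and is essentially an execution of the "exhaustion by compact CAT(0) cube complexes with subdivision" strategy previewed in the introduction.

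For the forward direction (completely metrizable $\Rightarrow$ intervals compact), suppose $(M,d)$ is a complete median metric realizing the given topology and fix $a,b\in M$. The interval $[a,b]$ is the image of the continuous gate retraction $\rho_{ab}$, hence closed in $M$ and therefore $d$-complete; its $d$-diameter is $d(a,b)<\infty$. I would argue $[a,b]$ is totally bounded, and hence compact, by invoking Theorem \ref{LocalCubingHigherDimension}: each point of $[a,b]$ has a compact convex cubulated neighborhood (in $M$), and by compactness of the ambient covering of a bounded $d$-set together with finite rank one extracts a finite subcover of $[a,b]$ by finite CAT(0) cube complexes. Within each such compact cube complex any $\varepsilon$-net is finite, giving a finite $\varepsilon$-net for $[a,b]$. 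The delicate point is to go from local compactness in the topology of $M$ to a finite subcover with respect to $d$; this is where one uses that $d$ induces the ambient topology together with $d$-boundedness of $[a,b]$.

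For the backward direction (intervals compact $\Rightarrow$ completely metrizable), I would first observe that convex hulls of finite sets are compact: by finite rank (Corollary \ref{RankAndConvexityEuclidean}), the median closure of a finite $F\subset M$ is contained in finitely many intervals spanned by iterated medians of points of $F$, and each such interval is compact by hypothesis. Using separability and path connectivity of $M$, pick a countable sequence $\{p_n\}$ so that $\bigcup_n \hull\{p_1,\ldots,p_n\}$ exhausts $M$, and set $K_n=\hull\{p_1,\ldots,p_n\}$. Each $K_n$ is a compact convex subspace, hence a topological median algebra, and by Theorem \ref{LocalCubingHigherDimension} together with compactness it is a finite CAT(0) cube complex.

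Next, assemble a compatible family of metrics. Put a CAT(0) cubical metric $d_n$ on $K_n$ by choosing positive weights on the hyperplanes of $K_n$. The inclusion $K_n\hookrightarrow K_{n+1}$ is typically not cellular for the native cubulations, but by Lemma \ref{CubeIntersection} each cube of $K_n$ meets the cubes of $K_{n+1}$ in subcubes; after a common subdivision, $K_n$ sits as a cubical subcomplex of a subdivision of $K_{n+1}$. Subdividing $K_{n+1}$'s cubes and extending the hyperplane weights of $K_n$ to the new hyperplanes of $K_{n+1}$, we arrange $d_{n+1}|_{K_n}=d_n$. Define $d(x,y)=d_n(x,y)$ for any $n$ with $x,y\in K_n$; this is well-defined, realizes the topology on each $K_n$ (hence on $M$, since the $K_n$'s form an exhaustion by compact sets in a locally compact space), and is a median metric because each $d_n$ is. Completeness follows because any $d$-Cauchy sequence is $d$-bounded and so, by the construction of the exhaustion, eventually trapped inside some compact $K_n$.

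The main obstacle is the coherent choice of hyperplane weights across the exhaustion: passing from $K_n$ to $K_{n+1}$ introduces new hyperplanes that can cut through old cubes, and one must subdivide $K_n$'s cubes and assign the new weights so as not to alter the restricted metric $d_n$. Organizing these choices once and for all, perhaps by fixing an enumeration of the global hyperplane/leaf structure of $M$ supplied by the local cubulation, is the nontrivial bookkeeping step; the existence of such a coherent assignment is precisely the "CAT(0) cube complex with subdivision" description the paper promises.
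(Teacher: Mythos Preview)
Your backward-direction outline is broadly along the paper's lines (exhaust by compact convex sets, cubulate each, extend the metric by subdivision), but the completeness step has a genuine gap. You assert that a $d$-Cauchy sequence, being $d$-bounded, is ``eventually trapped inside some compact $K_n$''---but nothing in your construction forces this. If, say, the weights you assign to the new hyperplanes at stage $n$ decay like $2^{-n}$, the whole space is $d$-bounded yet no escaping sequence is trapped in any $K_n$. The paper's key device is the \emph{1-thickening}: after cubulating $K_n$ and passing to the neighborhood $N(K_n)$ of cubes meeting $K_n$, every hyperplane of $N(K_n)$ that does not meet $K_n$ is assigned weight exactly $1$. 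This guarantees $d(B(K_{n+1}),K_n)\geq 1$, so once a Cauchy sequence has tail-diameter $<1$ and meets $K_m$, it is confined to $K_{m+1}$. Without this lower bound on layer thickness your completeness claim fails.

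Two smaller points. First, your forward direction is overcomplicated and, as written, circular: you want to show $[a,b]$ is compact, yet you invoke ``extracting a finite subcover'' of $[a,b]$ from the cubulated neighborhoods. The paper simply cites Bowditch's result that complete finite-rank median metric spaces have compact intervals; the local cubulation theorem is not needed here. Second, the passage ``by Theorem \ref{LocalCubingHigherDimension} together with compactness $K_n$ is a finite CAT(0) cube complex'' hides real work: the local cubes covering $K_n$ need not match along faces. The paper resolves this by taking the finite median subalgebra generated by the vertices of the covering cubes (finite by a theorem of Bowditch) and using Roller's result that this is the vertex set of a CAT(0) cube complex, yielding an honest cubulation of $K_n$ compatible with later subdivision.
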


It is not difficult to see that when $M$ is a complete, finite rank median metric space, intervals are compact (see \cite{Bowditch24}). 

It is possible for a median structure on $\R^2$ to have non-compact intervals and hence not completely metrizable. 

\begin{example}
Consider the standard $\ell_1$-median structure on the plane $\R^2$. Let $X=\R^2\setminus \{(x,y)\vert x\leq 0, y\leq 0\}$, the plane minus one closed quadrant. Then it is easy to check that $X$ is a median subalgebra, so that it inherits a median structure from $\R^2$. Moreover, the interval $[(1,-1),(-1,1)]$ is not compact (since its intersection with $X$ is not closed as a subset of $\R^2$). 
Thus, since a complete median metric space has compact intervals,  this median structure is not completely metrizable.  One can see directly that $X$ is not completely metrizable. Consider the infinite ray based at  $(-1,1)$ and going towards
$(-1,0)$.  A complete metric on $X$, would have have to assign this ray infinite length. However, it is parallel to the half open interval  $[(1, 1), (1, 0) )$, which has finite length. 
\label{NonCompactExample}
\end{example}

Thus,  the work in proving Theorem \ref{CompleteMetrization}  will be to show that an $ER$ homology manifold which is interval compact is completely metrizable. 
For the rest of this section, we will thus assume that $M$ is an interval compact median structure on an $ER$ homology manifold. 

We will need some basics about convex hulls in this setting. Let
$S\subset M$. The convex hull of $S$ is the intersection of all convex
sets containing $S$. Recall (Corollary \ref{CubeIsAnInterval}) that the convex hull of a median cube of
dimension $n=\dim M$ is an embedding of a real cube in $M$, since it is an
interval between diagonal points.  \mb{is $n=\rank$?, how do we know
  it's a cube? }\msb{I actually don't know why we need this. I think I will just scratch it}

The convex hull can be defined ``from below" as follows (see \cite{Bowditch24}). Given a subset $S\in M$, we define the \emph{join of $S$}

$$J(S)=\bigcup_{a,b\in S} [a,b]$$

We  define the iterated join recursively as $J^n(S)=J(J^{n-1}(S))$.

By \cite{Bowditch24}, we have that if  $S$ is finite and $M$ is a finite rank median algebra, then there exists $n$ such that $Hull(S)=J^n(S)$.
 
\begin{lemma}
Suppose that $K\subset M$ is compact and $M$ is path connected,
finite rank, locally compact and interval compact. Then $J(K)$ is
compact.

\label{JoinCompact}
\end{lemma}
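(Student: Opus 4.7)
The plan is to establish two facts about $J(K)$: it is closed in $M$, and it is contained in a compact subset. Together with the local compactness of $M$, these imply compactness. Closedness is immediate from the continuity of the median: if $x_n \in J(K)$ converges to $x$, write $x_n \in [a_n, b_n]$ with $a_n, b_n \in K$, pass to subsequences so $a_n \to a$, $b_n \to b$ in $K$ (by compactness of $K$), and conclude $abx = \lim a_n b_n x_n = \lim x_n = x$, hence $x \in [a,b] \subset J(K)$.

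For the containment in a compact set I would prove the following upper semicontinuity of the interval map: for each $(a, b) \in K \times K$ there exist open neighborhoods $U \ni a$, $V \ni b$ in $M$ and a compact set $W \subset M$ with $[a, b] \subset \mathring{W}$ such that $[a', b'] \subset W$ for every $a' \in U$, $b' \in V$. Given this, compactness of $K \times K$ yields a finite subcover by products $U_i \times V_i$, so $J(K) \subset \bigcup_i W_i$ lies in a compact subset. The set $W$ itself is constructed by applying Theorem \ref{LocallyConvex} at each point of the compact interval $[a, b]$, extracting a finite subcover by compact convex neighborhoods, and taking their union.

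The main obstacle is establishing this upper semicontinuity. Suppose otherwise; then there exist sequences $a_n \to a$, $b_n \to b$ and $x_n \in [a_n, b_n] \setminus W$. Projecting to $[a, b]$ via the gate map gives $y_n := ab x_n \in [a, b]$ with a subsequential limit $y \in [a, b]$, and the reverse projections $z_n := a_n b_n y \in [a_n, b_n]$ satisfy $z_n \to aby = y$ by continuity of the median. One then needs to show that $x_n$ itself admits a convergent subsequence; its limit would lie in $[a, b] \subset \mathring{W}$ by closedness of $J(K)$, contradicting $x_n \notin W$. Since we have no median metric a priori, this convergence is delicate: I would invoke Theorem \ref{LocalCubingHigherDimension} to cover $[a, b]$ by finitely many points each possessing a neighborhood isomorphic as a topological median algebra to a finite CAT(0) cube complex. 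Within each such cubical neighborhood intervals are combinatorially determined by the hyperplanes separating their endpoints and vary upper semicontinuously in Hausdorff distance; piecing these local statements together along the compact interval $[a, b]$ gives $[a_n, b_n] \subset W$ for large $n$, forcing $x_n$ to stay in the compact set $W$ and completing the argument.
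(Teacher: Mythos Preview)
Your overall strategy---closed plus contained in a compact set---is sound, and your closedness argument is correct. The heart of the matter, as you recognize, is the upper semicontinuity claim: for $a_n\to a$, $b_n\to b$, eventually $[a_n,b_n]\subset W$. This is exactly the Claim in the paper's proof. But your proposed proof of it has a genuine gap.

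You suppose $x_n\in [a_n,b_n]\setminus W$ and try to show $x_n$ has a convergent subsequence. Since $x_n$ lies \emph{outside} the compact $W$, there is no compactness available to extract one; this is why you reach for Theorem~\ref{LocalCubingHigherDimension}. But the local cubulation argument you sketch is circular: the cubical neighborhoods along $[a,b]$ only control intervals whose \emph{endpoints} lie inside them, whereas the endpoints $a_n,b_n$ of the intervals you care about are near $a,b$, not near an interior point $p\in[a,b]$. ``Piecing together'' local Hausdorff semicontinuity to conclude $[a_n,b_n]\subset W$ is precisely the global statement you are trying to prove, and you have not indicated a mechanism for it. (It also imports the ER homology manifold hypothesis, which the lemma as stated does not assume.)

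The paper's fix is much simpler and uses only path connectedness. Since $[a_n,b_n]$ is connected, contains $a_n\in\mathring W$ for large $n$, and meets the complement of $W$, it must meet the frontier $B(W)=\overline W\setminus\mathring W$, which is compact. Choose $z_n\in [a_n,b_n]\cap B(W)$; now compactness of $B(W)$ gives a subsequential limit $c\in B(W)$, and continuity of the median yields $c=\lim a_nb_nz_n=abc\in[a,b]\subset\mathring W$, contradicting $c\in B(W)$. Replacing your appeal to local cubulation with this boundary-point trick completes your argument.
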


\begin{cor}
When $M$ is a finite rank, path connected, locally compact and interval compact
median algebra, the convex hull of a compact set is compact.
\label{HullCompact}
\end{cor}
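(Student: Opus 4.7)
The plan is to deduce Corollary \ref{HullCompact} from Lemma \ref{JoinCompact} by iterating the join operation a bounded number of times. The first step is to observe, by induction on $n$, that the iterated join $J^n(K)$ of a compact set $K$ is compact. The base case $n=1$ is precisely Lemma \ref{JoinCompact}. For the inductive step, assuming $J^{n-1}(K)$ is compact, apply Lemma \ref{JoinCompact} to $J^{n-1}(K)$ (in place of $K$) to conclude that $J^n(K) = J(J^{n-1}(K))$ is compact.

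The second step is to combine this with the finite-rank stabilization principle cited from \cite{Bowditch24} just before Lemma \ref{JoinCompact}: in a finite rank median algebra, $Hull(S) = J^N(S)$ after finitely many iterations. The excerpt states this for finite $S$, but the bound $N$ depends only on the rank (and not on the cardinality of $S$), so one extends it to arbitrary $S$ by writing
\[
Hull(K) \;=\; \bigcup_{\substack{F \subset K \\ F \text{ finite}}} Hull(F) \;=\; \bigcup_{\substack{F \subset K \\ F \text{ finite}}} J^N(F) \;\subset\; J^N(K) \;\subset\; Hull(K),
\]
where the first equality uses that convex hull is directed under finite subsets, the second uses the uniform bound, and the last inclusion follows because $J^N(K)$ lies inside any convex set containing $K$. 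Thus $Hull(K) = J^N(K)$, which is compact by the first step.

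The main potential obstacle is precisely the uniformity of the bound $N$ in the equality $Hull(S) = J^N(S)$ for infinite $S$: one needs that $N$ depends only on the rank of $M$ and not on $S$. This is standard in the finite rank median algebra theory and appears in Bowditch's reference, so the proof should be a short invocation. With this cited, the two-step argument above completes the proof.
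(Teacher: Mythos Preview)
Your proposal is correct and matches the paper's intended derivation: the paper states Corollary \ref{HullCompact} immediately after Lemma \ref{JoinCompact} without a separate proof, relying on exactly the iteration argument you give together with the cited Bowditch bound $Hull(S)=J^N(S)$. Your extra care in extending the bound from finite $S$ to arbitrary $K$ (via the directed union over finite subsets, using that $N$ depends only on the rank) is a legitimate and correct way to bridge the gap between the paper's finite-$S$ citation and the general statement; in fact the Bowditch result holds for arbitrary subsets with $N$ equal to the rank, so the detour through finite subsets, while valid, is not strictly necessary.
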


\begin{proof}
Consider an open cover $\cU$ of $J(K)$. We wish to show $\cU$ has a finite subcover.  By assumption, $M$ is locally compact, so we may assume all the elements of $\cU$ have compact closure. 

For any interval $[a,b]$ with $a,b\in K$, we have a cover of it by elements of $\cU$ and since $K$ is compact, there exist finitely many such $U_1,..., U_n$. We let $U_{ab}=\bigcup_{i=1}^n U_i$, so that $[a,b]\subset U_{ab}$ and $\overline U_{ab}$ is compact.

We now need the following 

\paragraph{Claim.} For each $[a,b]$, there exist neighborhoods $U_a$ of $a$ and $U_b$ of $b$ such that for any $x\in U_a$ and $y\in U_b$, $[x,y]\subset U_{ab}$. 

To prove the claim, suppose that there do not exist such
neighborhoods. Then we may find a sequences in $(x_n)\in M$ and
$(y_n)\in M$ 
such that $x_n\to a$ and $y_n\to b$ and $[x_n,y_n]\not\subset U_{ab}$. Since each $[x_n,y_n]$ is path connected, we obtain a point $z_n\in [x_n,y_n]\cap B(U_{ab})$, where $B(U_{ab})=\overline U_{ab} - U_{ab}$ is the boundary of $U_{ab}$. Since $B(U_{ab})$ is compact, we may pass to a subsequence and so that there exists  
$c\in B(U_{ab})$ such that $z_n\to c$. Now we obtain $z_n=x_ny_nz_n\to abc$, but $abc\in [a,b]$ whereas $z_n$ lies outside some open neighborhood of $[a,b]$, a contradiction. This proves the claim. 

From the claim we now obtain for every interval $[a,b]$ in $J(K)$ open sets $U_a$ and $U_b$ such that all intervals with endpoints in $U_a$ and $U_b$ are contained in $U_{ab}$. The collection of open sets $U_a\times U_b$ running over all pairs of points in $K$ forms an open cover of $K^2$. Since $K^2$ is compact, there exists a finite subcover $U_{a_1}\times U_{b_1}...U_{a_n}\times U_{b_n}$ of $K^2$. The open sets $U_{a_ib_i}$ for $i=1,..,n$ then form an open cover of $J(K)$ and each $U_ {a_ib_i}$ is covered by finitely many elements of $\cU$, so we have a finite subcover of $\cU$, as required. 
\end{proof}

Now we use this to show

\begin{lemma}
If $M$  is an ER homology manifold with an interval compact median structure, $M$ has an exhaustion by compact convex sets. 
\label{ConvexExhaustion}
\end{lemma}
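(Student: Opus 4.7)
The plan is to combine a standard topological exhaustion with the convex hull construction, leveraging Corollary \ref{HullCompact} to promote compactness to compactness-plus-convexity.

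First I would invoke the standing assumptions together with interval compactness to verify the hypotheses of Corollary \ref{HullCompact}: by the standing assumptions in Section \ref{TopMedAlg}, $M$ is locally compact, separable, metrizable, path connected and (by Proposition \ref{EmbeddedCube}, since $M$ is finite dimensional) of finite rank; the hypothesis of the lemma adds that $M$ is interval compact. Thus for any compact $K\subset M$, the hull $\hull(K)$ is compact.

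Next I would produce the exhaustion. Since $M$ is separable, metrizable, locally compact and finite dimensional, as noted in Section \ref{Fancy} it admits an exhaustion by compact subsets $K_1\subset K_2\subset\cdots$ with $\bigcup_n K_n = M$. Set $L_n = \hull(K_n)$. Each $L_n$ is convex by construction and compact by Corollary \ref{HullCompact}. Since $K_n\subset K_{n+1}\subset L_{n+1}$ and $L_{n+1}$ is convex, we have $L_n=\hull(K_n)\subset L_{n+1}$, giving a nested sequence. Finally, $\bigcup_n L_n\supset\bigcup_n K_n = M$, so $\{L_n\}$ is the desired exhaustion of $M$ by compact convex sets.

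The only nontrivial input is Corollary \ref{HullCompact}, which has already been established; no real obstacle remains once one notices that all of its hypotheses are in force for an ER homology manifold with an interval compact median structure.
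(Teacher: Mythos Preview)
Your proposal is correct and follows essentially the same approach as the paper: start with a compact exhaustion (which exists by the standing topological assumptions) and replace each term by its convex hull, using Corollary \ref{HullCompact} to retain compactness. Your write-up is simply more explicit about checking hypotheses and nestedness than the paper's terse version.
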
 
  
  \begin{proof}
  We already know that $M$ has an exhaustion by compact sets. Thus, we need to show that for each compact set $K$, there exists $L$ compact and convex with $K\subset L$. But by Lemma \ref{HullCompact}, $\hull(K)$ is compact. 
\end{proof}
  
  Terminology: when we metrize a  median structure, we turn each $n$-cube in the median structure into a product of $n$ compact intervals in $\Rbb$ (of possibly different lengths), with the $\ell_1$ metric on this product. We call such an object an \emph{n-box} or simply a \emph{box}. 
     
  Roughly speaking, in order to completely metrize $M$, we proceed by describing $M$ recursively as an ``onion" built out of compact layers, where the first layer is the star of a vertex, and the $n$'th layer is the collection cubes that meet previous layer.  We will metrize in such a way that each layer has thickness one. This will ensure that the final median metric space we obtain is complete. 
  
More precisely,  the main theorem of this section is as follows. 
  
  \begin{thm}
  There exists an exhaustion $K_1\subset K_2\subset\ldots$ of $M$ by compact convex sets and a median metric on each $K_i$, such that 
  
  \begin{enumerate}

  \item for each $i$, $K_i$ admits a median metric coming from a
    finite CAT(0) cubulation such that the restriction of the metric to each
    cube is an $\ell_1$ metric on a  box,
   \item for each $i<j$, the metric on $K_j$ is an extension of the metric on $K_i$, obtained by isometrically gluing $n$-boxes along $B(K_i)$, where $B(K)=K\setminus \mathring{K}$ denotes the frontier of $K$ 
   \item for each $i$, $d(B(K_{i+1}), K_i)\geq 1$. 
  \end{enumerate}
  \label{ThickExhaustion}
  \end{thm}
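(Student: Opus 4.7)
The plan is to construct the exhaustion recursively: $K_1$ comes from a single application of Theorem \ref{LocalCubingHigherDimension}, and $K_{i+1}$ is built from $K_i$ by attaching cubes supplied by local cubulations at boundary points of $K_i$, with edge lengths chosen large enough to realize the thickness condition. For the base case, pick any $a \in M$; by Theorem \ref{LocalCubingHigherDimension} a neighborhood $U$ of $a$ is isomorphic as a topological median algebra to a finite CAT(0) cube complex, and we take $K_1 \subset U$ to be a compact convex cube subcomplex containing $a$ (for instance, the interval between two diagonally opposite vertices of a cube of $U$ containing $a$, which is a box and so convex). Fix an $\ell_1$-box metric on $K_1$ by assigning positive lengths to its edges.

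For the inductive step, assume $K_i$ has been built. At each $b \in B(K_i)$ apply Theorem \ref{LocalCubingHigherDimension} to obtain a cubulated neighborhood $V_b \subset M$ of $b$. Because cubes in $M$ are intrinsic objects spanned by additive intervals (Lemma \ref{Flag}) and any two cubes meet in a subcube of each (Lemma \ref{CubeIntersection}), after a common subdivision of edges---which merely rewrites each box as a product of smaller boxes---the cubulation of $V_b$ agrees with that of $K_i$ on their overlap near $b$. Using compactness of $B(K_i)$, pick finitely many $V_{b_1}, \dots, V_{b_m}$ covering $B(K_i)$; then $K_i \cup \bigcup_j V_{b_j}$ inherits a coherent finite cube complex structure. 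Take its median convex hull, which is compact by Corollary \ref{HullCompact}, and iterate the hull-completion step finitely many times; this process terminates because of the local finiteness of additive rays (Proposition \ref{LocallyBoundedAdditive}). Repeat this whole enlargement $N$ times to produce a chain $K_i = K_i^{(0)} \subset \cdots \subset K_i^{(N)}$, and on every new edge introduced at stage $j$ assign length at least $1/N$. Then any $\ell_1$-path in $K_i^{(N)}$ from $K_i$ to $B(K_i^{(N)})$ must traverse all $N$ collars in sequence and so has length at least $1$. Set $K_{i+1} = K_i^{(N)}$; since $K_i$ then lies in the interior of $K_{i+1}$, the union $\bigcup_i K_i$ is open and closed in $M$, hence equals $M$.

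The main obstacle is the compatibility of the cube structures at overlaps: the local cubulations at different points of $B(K_i)$, together with the pre-existing cubulation of $K_i$, may truncate the same additive ray at different lengths, so the edges need not a priori match. The resolution relies on the intrinsic nature of cubes in $M$: they are convex hulls of median cubes, hence determined by the median structure, so the squares and higher cubes appearing in different local cubulations coincide as subsets of $M$. Only the decomposition into smaller cubes can differ, and any two box decompositions of the same box admit a common refinement that is again a product of boxes, preserving both the $\ell_1$-form of the metric and the CAT(0) property. A secondary subtlety is verifying the termination of the convex-hull completion procedure, which follows from Proposition \ref{LocallyBoundedAdditive} applied to the compact frontier introduced at each pass.
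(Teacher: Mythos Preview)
Your approach has the right shape but contains a genuine gap in the exhaustion argument and misses the key tool the paper uses to produce the cubulation.

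The exhaustion gap: you argue that since $K_i \subset \mathring{K}_{i+1}$ the union $\bigcup_i K_i$ is open and closed in $M$, hence all of $M$. Openness is fine, but closedness does not follow: an increasing union of compact sets, each contained in the interior of the next, need not be closed (e.g.\ the squares $[-1+\tfrac{1}{i},\,1-\tfrac{1}{i}]^2$ in $\R^2$). Your construction enlarges $K_i$ only by local cubulated neighborhoods of boundary points, and those neighborhoods may shrink as $i$ grows, so the union can be a proper open subset of $M$. The paper handles this by first invoking Lemma~\ref{ConvexExhaustion} to obtain an auxiliary exhaustion $L_1\subset L_2\subset\cdots$ of $M$ by compact convex sets, and then forcing $L_{i+1}\subset K_{i+1}$ at each stage (take $K'_{i+1}=\hull(K_i\cup L_{i+1})$, then apply a single thickening); this guarantees $\bigcup_i K_i=M$.

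The missing tool concerns how one actually equips a compact convex $K$ with a finite CAT(0) cubulation compatible with a previously chosen one on a subset. You cover $K_i\cup\bigcup_j V_{b_j}$ by cubes, assert that common subdivision yields a coherent cube complex, then take the convex hull and ``iterate the hull-completion step finitely many times,'' with termination attributed to Proposition~\ref{LocallyBoundedAdditive}. That proposition only bounds the number of additive rays at a single point and has no evident bearing on such an iteration; nor is it clear what the iteration is or why it stabilizes. The paper bypasses all of this: given compact convex $K$ covered by finitely many cubes $C_1,\ldots,C_m$, it takes the finite set $V$ of their vertices and invokes Bowditch's theorem that a finite subset of a finite-rank median algebra generates a \emph{finite} median subalgebra $\hat V$; by Roller, $\hat V$ is the $0$-skeleton of a finite CAT(0) cube complex, and this subdivides the $C_i\cap K$ into a genuine cubulation of $K$ in one stroke. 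The metric is then specified by assigning a positive weight to each hyperplane (splitting old weights when old hyperplanes get subdivided), and the thickness condition is obtained in a single layer by giving weight $1$ to every hyperplane of the thickening $N(K)$ that does not meet $K$. Your $N$-fold collar with edge lengths $1/N$ is unnecessary.
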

  The basic step in the above theorem is what we call the  ``1-thickening" and it goes like this.

  \begin{figure}[h]
\includegraphics[scale=.7]{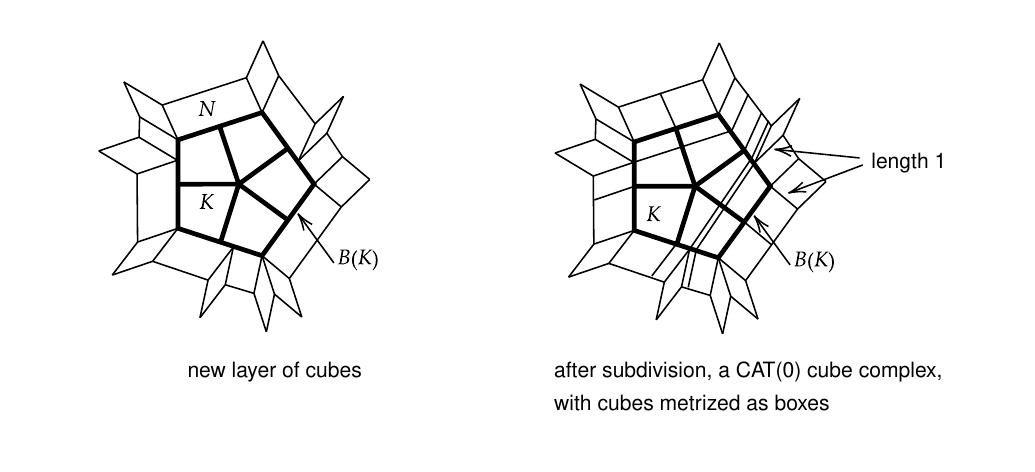}
\caption{The basic thickening step. For each new cube, edges going out from $K$ are assigned length 1.}
\label{Thickening}
\end{figure}

 \paragraph{The 1-thickening construction.} Refer to Figure
 \ref{Thickening} for this discussion. Consider a compact convex set
 $K$.  First, observe that a convex subset of a cube is a cube
 (perhaps of lower dimension). Since $K$ is covered by finitely many
 cubes $C_1,...C_n$, it follows that $K$, after replacing each $C_i$ with its intersection with $K$, has the structure of a finite CAT(0) cube complex, except that cubes in adjoining neighborhoods may not ``match up" along their faces (see, for example, the first diagram in Figure \ref{Thickening}). To make this into an actual CAT(0) cube complex, we need to subdivide. To this end, let $V$ denote the collection of all vertices of the $C_i$'s.
 A theorem of Bowditch \cite[Proposition 3.3.3]{Bowditch24}, tells us that any finite subset of a finite rank median algebra generates a finite subalgebra. Let $\hat V$ denote the median subalgebra generated by $V$. Note that by a result of Roller \cite{Roller98}, 
 $\hat V$ is the vertex set of a CAT(0) cube complex $Y$. There exists a natural embedding of $Y$ in $M$ extending the inclusion of $\hat V\hookrightarrow M$. Note that all the vertices of $V$ are vertices of $Y$ and that the cubes of $Y$ are subcubes of the $C_1,...,C_n$. Since $K$ is convex, and $V\subset K$ and $K$ is the union of the the cubes spanned by $V$, we see that $Y$ is an honest CAT(0) cubulation of $K$, as required. \msb{Added an explanation of why we get an actual CAT(0) cubulation}
 
Now we assume that the cubes of $K$ have been metrized as boxes. Consider $B(K)$, the frontier of $K$. We let $N=N(K)$ denote the union of the cubes that meet $K$. We wish to metrize $N(K)$ so that it agrees with the metric on $K$, and so that the distance between the frontier $B(N)$ of $(N)$ and $K$ is at least 1.

Note that in a CAT(0) cube complex $X$, any 1-skeleton geodesic between vertices crosses the same hyperplanes, a median metric on $X$ consistent with the CAT(0) cubical structure is given by an assignment of a positive number to each hyperplane. We are assuming we have already metrized $K$, so positive numbers have been applied to the hyperplanes of $K$. However, in producing the  CAT(0) cubical structure on $N$ we may have subdivided $K$. Thus, if a given hyperplane $\hh$ in $K$ has the value $\lambda$, then for the subdivided structure, and there are $n$ parallel copies of $\hh$ in the subdivision, then each such copy gets assigned a value of $\lambda/ n$, so that the metric stays the same. 

Now we address the rest of the hyperplanes in $N$. These are hyperplanes that do not meet $K$. We assign each such hyperplane the value $1$. Since every 1-skeleton path between $B(N)$ and $K$ must cross one of these hyperplanes, we obtain the desired condition that $d(B(N), K)\geq  1$.

 With this basic construction in mind, we are now ready to prove the theorem.
 
\begin{proof}[Proof of Theorem \ref{ThickExhaustion}] We consider an exhaustion $L_1\subset L_2\subset\ldots$ of $M$ by compact, convex sets.

We let $K_1=L_1$. Since $K_1$ has a finite cubulation, we metrize it as an $\ell_1$ metric in which each edge of each cube has length 1. 

Now we consider $K_1\cup L_2$. Let $K'_2=\hull(K_1\cup L_2)$. Since $K_1$ and $L_2$ are compact, 
by Lemma \ref{HullCompact}, $K'_2$  is compact. 

 As noted in the 1- thickening construction, $K'_2$ admits a cubulation extending the one on $K_1$, we give each new cube any $\ell_1$ metric we like which extends the metric on $K_1$. 

Now we apply the 1-thickening constructing to $K'_2$. Let $K_2=N_1(K'_2)$. From the properties of the 1-thickening, we have that $d(B(K_2), K'_2))=1$. Since $K_1\subset  K'_2$, we have that $d(B(K_2), K_1)=1$.

We continue in this manner to produce $K_n$ such that $K_{n-1}\subset K_n$, $L_n\subset K_n$ and $d(B(K_n), K_{n-1})=1$. 
  \end{proof}
  
Finally we obtain

\begin{claim}
The metric constructed above yields a  complete metric on $M$.
\end{claim}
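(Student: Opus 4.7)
The strategy is to show every Cauchy sequence in $(M,d)$ is eventually trapped in some compact $K_N$, and then conclude by compactness. First observe that by Theorem \ref{ThickExhaustion}(2) the metrics on the $K_n$ extend compatibly, giving a well-defined global median metric $d$ on $M = \bigcup_n K_n$ (any two points lie in a common $K_n$, and the value is independent of the choice).

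The heart of the argument is the following key lemma: \emph{if $x \in K_i$ and $y \in M \setminus K_{i+k}$, then $d(x,y) \geq k$}. To prove it, I would consider the gate projections $y_j := \rho_{K_j}(y)$ for $j = i+1, \dots, i+k$. Two median-algebra facts do the work. First, since $y \notin K_j$, the gate $y_j$ must lie on the frontier $B(K_j)$: otherwise $y_j$ would have a neighborhood inside $\mathring{K}_j$, yet the connected interval $[y,y_j]$ meets $K_j$ only at $y_j$ (by the defining gate identity $y_j \in [y,z]$ for all $z \in K_j$), giving a contradiction. Second, using compatibility of gates for nested convex sets, $\rho_{K_j} = \rho_{K_j}\circ\rho_{K_{j+1}}$, one shows that the points $x,\,y_{i+1},\,y_{i+2},\,\dots,\,y_{i+k},\,y$ appear consecutively along the interval $[x,y]$. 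Additivity of the median metric on intervals then gives
\[
d(x,y) \;=\; d(x, y_{i+1}) \;+\; \sum_{j=i+1}^{i+k-1} d(y_j,y_{j+1}) \;+\; d(y_{i+k}, y),
\]
and each of the first $k$ terms on the right is at least $1$ by Theorem \ref{ThickExhaustion}(3), since each gap has one endpoint in some $K_j$ and the other in $B(K_{j+1})$.

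With the lemma in place, completeness is immediate. Let $(x_n)$ be Cauchy; it has finite diameter $D$. Pick $i$ with $x_1 \in K_i$ and set $N = i + \lceil D \rceil + 1$. The lemma forces $x_n \in K_N$ for every $n$. But $K_N$ is a finite union of $\ell_1$-metrized boxes, hence compact, hence complete; so $(x_n)$ converges to a limit in $K_N \subset M$.

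The main obstacle is the gate-theoretic lemma. Verifying that the gate of an exterior point lies on the frontier, and that successive gates for nested convex sets are totally ordered along the interval between $x$ and $y$, requires some care with the median axioms and continuity of the median operation; but both are standard consequences of the theory of topological median algebras (see, e.g., \cite{Bowditch24}).
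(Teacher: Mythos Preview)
Your proof is correct and follows the same strategy as the paper: use the distance lower bound from Theorem~\ref{ThickExhaustion}(3) to trap any Cauchy sequence in a single compact $K_N$, then conclude by compactness. The paper's argument is slightly more economical---it only needs the $k=1$ case of your key lemma (points outside $K_{m+1}$ are at distance $\geq 1$ from $K_m$), applied once the Cauchy sequence has diameter less than $1$---whereas you prove the full telescoping estimate $d(x,y)\geq k$ via iterated gates; but the idea and the ingredients are the same.
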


\begin{proof}
Since the metric above is constructed via an exhaustion by convex sets $K_1\subset K_2...$, it follows that any two points of $M$ are contained in some $K_n$, from which we can compute their distance, and this distance is the same for any larger $K_m$. We thus have a metric on $M$. Now suppose that $\{x_n\}$ is a Cauchy sequence. Then there exists $N$, such that $d(x_i,x_j)<1$ for all $i,j\geq N$. Choose $m$ such that $x_i\in K_m$ for all $x_i$, $i=1,\ldots,N$. Then all $x_i\in K_{m+1}$ for all $i>N$, because $d(x_i,x_N)<1$ and points outside of $K_{m+1}$ are distance at least one from $K_N$. 
\end{proof}

\subsection{Constructing examples}

The above metrization construction also gives a way of producing many median structure on ER homology manifolds that are not median isomorphic to CAT(0) cube complexes. One builds a metric median space recursively in layers as in the 1-thickening construction. At each stage, we obtain a median metric space built out of finitely many boxes being glued around the previously constructed compact median space. Since we are allowed to subdivide at each stage, we may add more singular points as we go out. The union of all the layers will be a complete median metric structure. If one adds more and more singular points as one moves out, one can obtain the property that a given square (say in the first layer) will have infinitely many singular leaves (see Figure \ref{NonCubulatedExample}). This is something that does not occur in a CAT(0) cube complex.

  \begin{figure}[h]
\includegraphics[scale=.7]{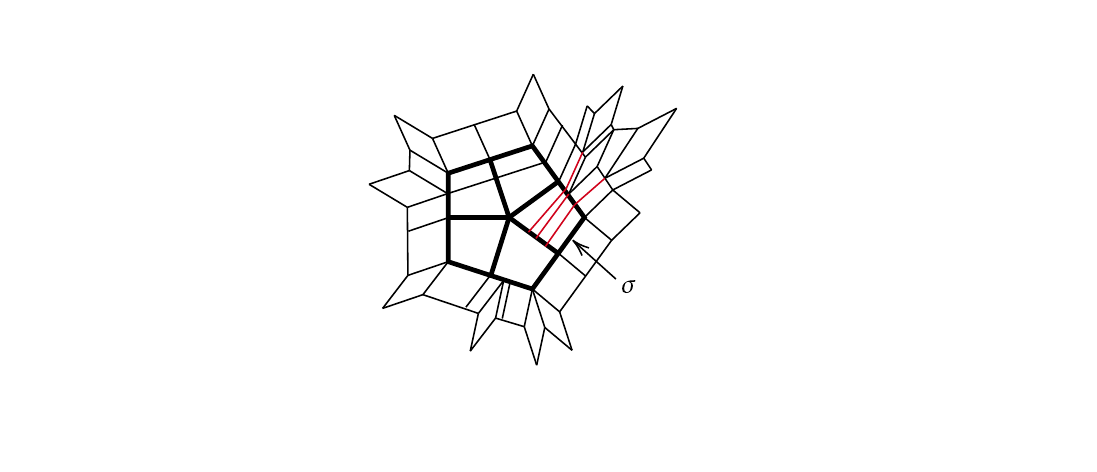}
\caption{Carrying out the 1-thickening construction, subdividing more and more as we go outward. The cube $\sigma$ will contain infinitely many singular leaves.}
\label{NonCubulatedExample}
\end{figure}

\subsection{Completions: a median Floyd construction}

In this section we describe a variant of the construction of the
previous section to construct median structures on the compact ball
$B^n$ for $n>1$, which are not locally cubulable. This stands in
contrast to the main theorem of this paper. One such example, is the metric ball in $\ell_1$ on $\R^2$. Note that the boundary of this ball contains a line interval in the line $y=x$, and points along this interval, do not have cubulated neighborhoods. In this section, we will see that there are in fact uncountably many median structures on the disk for which points on the boundary do not have cubulated neighborhoods. 

Let $M$  be a median structure given by a Gromov hyperbolic CAT(0) cubulation of the $\R^n$.  We proceed to metrize $M$ as in the previous section, writing $M$ as an exhaustion by compact CAT(0) cube complexes $K_i$ (here we do not need to address subdivision at each stage). Now we metrize each layer $K_i$, so that the outward factor of each cube has all of its edges of length $2^{-i}$. This provides a median metrization of our given structure similar to the one in the previous section, but for which the whole metric space is bounded (since the distance between any two vertices is bounded by $\sum_{i=1}^\infty 2^{-i}=1$). We let $\overline M$ denote the completion of $M$ with this metric and let $\partial M=\overline M-M$. 

Note that we have natural retractions $K_{i+1}\to K_i$: recall that
each cube $\sigma $in $K_{i+1}$ but not in $K_i$ splits into
$\sigma=\tau\times\nu$, where the $\tau\subset K_i$ and $\nu$ meets
$K_i$ in a point. We apply the natual retraction $\sigma\to\tau$. Note
that the retraction $K_{i+1}\to K_i$ is distance
non-increasing. Moreover, it moves points less and less as
$i\to\infty$. It is not hard to see that the inverse limit of these
retractions coincides with the metric completion $\overline M$. Each
retraction $K_{i+1}\to K_i$ can be approximated arbitrarily closely by
homeomorphisms, and it follows from Morton Brown's theorem \cite{mort}
that the
same is true for the induced maps $\overline M\to K_i$, and in
particular $\overline M$ is homeomorphic to a ball.

Note that if we pick a generic point in the boundary $B(M)$, it does
not admit a local cubulation. 
This provides a median structure on the disk that  does not admit a local cubulation as in Theorem \ref{LocalCubingHigherDimension}.

\bibliography{ER}

\end{document}